\setlist[enumerate]{label=$(\mathrm{\arabic*})$, leftmargin=*}
\setlist[itemize]{leftmargin=*}
\newaliascnt{theo}{thm}
\newtheorem{theo}[theo]{Theorem}
\newaliascnt{cor}{thm}
\newtheorem{cor}[cor]{Corollary}
\newaliascnt{prop}{thm}
\newtheorem{prop}[prop]{Proposition}
\newaliascnt{lem}{thm}
\newtheorem{lem}[lem]{Lemma}
\newaliascnt{conj}{thm}
\newtheorem{conj}[conj]{Conjecture}
\newaliascnt{que}{thm}
\newtheorem{que}[que]{Question}
\newaliascnt{ass}{thm}
\newtheorem{ass}[ass]{Assumption}
\newaliascnt{defnot}{thm}
\newaliascnt{princ}{thm}
\theoremstyle{remark}
\newaliascnt{rem}{thm}
\newtheorem{rem}[rem]{Remark}
\theoremstyle{definition}
\newaliascnt{defn}{thm}
\newtheorem{defn}[defn]{Definition}
\newaliascnt{exmp}{thm}
\newtheorem{exmp}[exmp]{Example}
\newaliascnt{notn}{thm}
\newtheorem{notn}[notn]{Notation}
\newcommand{\Z}{\mathbb{Z}\xspace}
\newcommand{\C}{\mathbb{C}\xspace}
\newcommand{\F}{\mathbb{F}\xspace}
\newcommand{\Q}{\mathbb{Q}\xspace}
\newcommand{\G}{\mathbb{G}\xspace}
\newcommand{\kk}{\overline{k}\xspace}
\DeclareMathOperator{\Spec}{Spec}
\DeclareMathOperator{\res}{res}
\DeclareMathOperator{\dv}{div}
\DeclareMathOperator{\alb}{alb}
\DeclareMathOperator{\img}{Im}
\DeclareMathOperator{\Cor}{Cor}
\DeclareMathOperator{\Hom}{Hom}
\DeclareMathOperator{\Pic}{Pic}
\DeclareMathOperator{\Gal}{Gal}
\DeclareMathOperator{\Fil}{Fil}
\DeclareMathOperator{\Alb}{Alb}
\DeclareMathOperator{\Br}{Br}
\DeclareMathOperator{\inv}{inv}
\DeclareMathOperator{\nd}{nd}
\DeclareMathOperator{\CH}{CH}
\DeclareMathOperator{\Bl}{Bl}
\DeclareMathOperator{\NS}{NS}
\DeclareMathOperator{\ur}{ur}
\DeclareMathOperator{\Kum}{Kum}
\let\c@equation\c@thm
\numberwithin{equation}{section}
\newcommand{\red}{\color{red}}
\newcommand{\blue}{\color{blue}}
\newcommand{\black}{\color{black}}
\title{Local and local-to-global Principles for zero-cycles on geometrically Kummer $K3$ surfaces}
\author[*]{Evangelia Gazaki*} \address[*]{\normalfont Department of Mathematics, University of Virginia, 221 Kerchof Hall, 141 Cabell Dr., Charlottesville, VA, 22904, USA. Email: \texttt{eg4va@virginia.edu}}
\author[**]{Jonathan Love**} \address[**]{\normalfont Mathematical Institute, Leiden University, Einsteinweg 55, 2333 CC Leiden, the Netherlands. Email: \texttt{j.r.love@math.leidenuniv.nl}}
\begin{document}

\maketitle
 
\begin{abstract} 
	  Let $X$ be a $K3$ surface over a $p$-adic field $k$ such that 
	for some abelian surface $A$  isogenous to a product of two elliptic curves, there is an isomorphism over the algebraic closure of $k$ between $X$ and the Kummer surface associated to $A$. 
 Under some assumptions on the reduction types of the elliptic curve factors of $A$, we prove 
	that the Chow group $A_0(X)$ of zero-cycles of degree $0$ on $X$ is the direct sum of a divisible group and a finite group. 
	This proves a conjecture of Raskind and Spiess and of Colliot-Th\'{e}l\`{e}ne and it is the first instance for $K3$ surfaces when this conjecture is proved in full. 
	 This class of $K3$'s 
	includes, among others, the diagonal quartic surfaces. In the case of good ordinary reduction we describe many cases when the finite summand of $A_0(X)$ can be completely determined. 
	
	Using these results, we explore a local-to-global conjecture of Colliot-Th\'{e}lene, Sansuc, Kato and Saito  which, roughly speaking, 
	predicts that the Brauer-Manin obstruction is the only obstruction to Weak Approximation for zero-cycles. We give examples of Kummer surfaces over a number field $F$ where the ramified places of good ordinary reduction contribute nontrivially to the Brauer set for zero-cycles of degree $0$ and we describe cases when an unconditional local-to-global principle can be proved, giving the first unconditional evidence for 
	this conjecture  in the case of  
	$K3$ surfaces.

\end{abstract} 
	
\section{Introduction}
For a smooth projective variety $Y$ over a field $k$ we consider the Chow group of zero-cycles $\CH_0(Y)$. This group has a filtration 
\[\CH_0(Y)\supset A_0(Y)\supset T(Y)\supset 0,\] where $A_0(Y):=\CH_0(Y)^{\deg=0}$ is the subgroup of cycles of degree $0$ and $T(Y)=A_0(Y)^{\alb_Y=0}$ is the kernel of the Albanese map 
$\alb_Y:A_0(Y)\rightarrow\Alb_Y(k).$ 

The group $\CH_0(Y)$ is in general extremely hard to compute, even when $Y$ is a surface, due mainly to how little information we have on the structure of the Albanese kernel. When $k$ is a large transcendental field like $\C$ or $\Q_p$ and the variety $Y$ has positive geometric genus, it follows by works of Mumford (\cite{Mumford1968}) and Bloch (\cite{Bloch1975}) that $T(Y)$ is very large; in particular its maximal divisible subgroup is nontrivial. 
The question that is of interest to us in this article is what can be said about the non-divisible part of the Chow group when $k$ is a finite extension of $\Q_p$ or $\Q$. In \cite{Gazaki/Leal2022, Gazaki/Hiranouchi2021, Gazaki2022weak, Gaz22} the first author joint with I. Leal, T. Hiranouchi and A. Koutsianas obtained local and local-to-global results for certain classes of abelian varieties. In this article we extend these results to a special class of $K3$ surfaces, those that are \textit{geometrically of Kummer type}. That is, over the algebraic closure $\kk$ the surface $Y_{\kk}:=Y\otimes_k \kk$ becomes isomorphic to the Kummer surface $\Kum(A)$ associated to an abelian surface $A$ over $\kk$.    

\subsection{Results over $p$-adic fields} For varieties defined over a finite extension $k$ of the $p$-adic field $\Q_p$ 
the motivating question is the following conjecture due to Raskind and Spiess, motivated by earlier considerations of Colliot-Th\'{e}l\`{e}ne. 
 \begin{conj}\label{localconj2} (\cite[1.4(d,e,f)]{Colliot-Thelene1995}, \cite[Conjecture 3.5.4]{Raskind/Spiess2000}) Let $Y$ be a smooth projective and geometrically connected variety over $k$. The kernel $T(Y)$ of the Albanese map of $Y$ is the direct sum of its maximal divisible subgroup and a finite group.  
 \end{conj}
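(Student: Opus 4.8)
The plan is to reduce the statement to a single finiteness assertion and then to prove that finiteness prime by prime. A divisible abelian group is an injective $\Z$-module, so the maximal divisible subgroup $T(Y)_{\mathrm{div}}$ is automatically a direct summand of $T(Y)$; writing $T(Y)\cong T(Y)_{\mathrm{div}}\oplus\big(T(Y)/T(Y)_{\mathrm{div}}\big)$, the complementary quotient is reduced, and the conjecture becomes equivalent to the single statement that $T(Y)/T(Y)_{\mathrm{div}}$ is \emph{finite}. By a standard structure-theoretic argument it then suffices to show, for every prime $\ell$, that $T(Y)/\ell^m T(Y)$ is finite of order bounded independently of $m$, and that this order equals $1$ for all but finitely many $\ell$. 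The natural tool for this uniform control is the family of cycle class maps $T(Y)/\ell^m\to H^{2d}_{\et}(Y,\Z/\ell^m(d))$, where $d=\dim Y$, together with the companion maps on $\ell^m$-torsion supplied by the coniveau (Bloch--Ogus) spectral sequence.

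First I would dispose of the primes $\ell\neq p$. Over a finite extension of $\Q_p$ the groups $H^i_{\et}(Y,\Z/\ell^m(j))$ are finite, and arithmetic duality together with Kato--Saito higher-dimensional class field theory pin down the image of the cycle class map on zero-cycles. The expected outcome is that $T(Y)$ is $\ell$-divisible for all but finitely many $\ell$, so these primes contribute nothing, while for each of the remaining $\ell\neq p$ the comparison of $\CH_0(Y)$ modulo $\ell^m$ with a fixed finite étale cohomology group, uniformly in $m$, bounds $T(Y)/\ell^m$ and hence gives a finite $\ell$-primary contribution to $T(Y)/T(Y)_{\mathrm{div}}$.

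The decisive and genuinely hard step is the prime $\ell=p$. Here the $p$-adic cycle class map no longer lands in a finite prime-to-$p$ étale group; one must instead work with the $p$-adic étale Tate twists, or equivalently with syntomic and logarithmic de Rham--Witt cohomology, and control the image through $p$-adic Hodge theory. This is precisely where the conjecture is open in general: without a hypothesis on the reduction of $Y$ there is no available uniform bound on $T(Y)/p^m T(Y)$, and over a $p$-adic field $T(Y)$ moreover carries an enormous uniquely divisible part (in the spirit of Mumford and Bloch) which one must show splits off cleanly. The realistic route---following the blueprint of Saito--Sato for varieties of good or semistable reduction---is to establish finiteness of the $p$-primary defect under a good-reduction hypothesis, where $p$-adic Hodge theory delivers the needed boundedness; proving it for an arbitrary $Y$ would require reduction-type-independent control of the $p$-adic cycle class map that is not currently known. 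This is exactly why the present paper does not attack the conjecture for arbitrary $Y$ but instead proves it for geometrically Kummer $K3$ surfaces, for which $T(Y)$ is governed by the far better understood Chow groups of products of elliptic curves and the $p$-primary part can be handled via the reduction-type hypotheses imposed on the elliptic factors.
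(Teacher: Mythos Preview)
The statement you are proposing to prove is a \emph{conjecture}, not a theorem. The paper records it as Conjecture~1.2 precisely because it is open in general; there is no ``paper's own proof'' to compare against. What the paper actually does is verify the conjecture in special cases (geometrically split Kummer $K3$ surfaces and certain abelian surfaces, under reduction hypotheses), by reducing to the known results of Raskind--Spiess and Gazaki--Leal for products of elliptic curves.

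Your proposal is not a proof either, and you essentially say so yourself: after the correct preliminary reduction to finiteness of $T(Y)/T(Y)_{\mathrm{div}}$, you treat the primes $\ell\neq p$ by appealing to Saito--Sato (which is indeed the known result the paper cites), and then for $\ell=p$ you openly state that ``this is precisely where the conjecture is open in general'' and that the required control of the $p$-adic cycle class map ``is not currently known.'' So what you have written is an accurate account of the status of the conjecture and of the strategy one would like to follow, not a proof attempt with a gap to diagnose. One small technical caution: the ``standard structure-theoretic argument'' you invoke---bounding $|T(Y)/\ell^m|$ uniformly in $m$ and showing it is $1$ for almost all $\ell$---does not by itself force the reduced quotient to be finite; one also needs an input such as finiteness of $n$-torsion in $\CH_0(Y)$ (known for surfaces by Colliot-Th\'el\`ene) or a bounded-exponent statement, which is exactly how the paper proceeds in the cases it handles.
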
 
 
S. Saito and K. Sato (\cite[Theorem 0.3]{Saito/Sato2010}) obtained a breakthrough result showing the weaker property that the subgroup $A_0(Y)$ is the direct sum of a group divisible by any integer $m$ coprime to $p$ and a finite group. Moreover, they showed that the finite piece vanishes in the case of good reduction (\cite[Corollary 0.10]{Saito/Sato2010}). 

The full conjecture is only known in limited cases by the work of Raskind and Spiess (\cite[Theorem 1.1]{Raskind/Spiess2000}) for certain products of curves whose Jacobians have split semi-ordinary reduction and by the first author and I. Leal for products of elliptic curves with at most one having  supersingular reduction (\cite[Theorem 1.2]{Gazaki/Leal2022}).  
In \autoref{section2} we prove this conjecture for large infinite collections of geometrically Kummer $K3$ surfaces, by pushing forward the information from the abelian surface. Note that for a $K3$ surface $Y$ the Albanese variety is zero, and hence $T(Y)=A_0(Y).$ Our first theorem is the following. 

\begin{theo}\label{localtheointro} (cf.~\autoref{padic1}) Let $k$ be a finite extension of the $p$-adic field $\Q_p$. 
Let $X$ be a $K3$ surface over $k$. Suppose there exists a finite extension $K/k$ such that the base change $X_K:=X\otimes_k K$ becomes isomorphic to the Kummer surface $\Kum(A)$ associated to an abelian surface $A$ over $K$. 
\begin{enumerate}
\item[(i)] Suppose that there exists an isogeny $\phi:E_1\times E_2\to A$ from a product of elliptic curves at most one of which has potentially good supersingular reduction. Then \autoref{localconj2} is true for both $A$ and $X$. 
\item[(ii)] Suppose that the abelian surface $A$ is geometrically simple and it has split semi-ordinary reduction.  Then the group $A_0(X)$ is the direct sum of its maximal divisible subgroup and a torsion group. 
\end{enumerate} 
\end{theo}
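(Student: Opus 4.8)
The plan is to reduce the statement about $A_0(X)$ for the $K3$ surface $X$ to a statement about $T(A)$ (equivalently $A_0(A)$, since the Albanese kernel of an abelian surface is all of $A_0$) for the associated abelian surface, and then invoke the known cases of Conjecture \ref{localconj2} for abelian varieties. First I would handle the descent from $K$ to $k$: a standard transfer argument using the norm map $N_{K/k}\colon A_0(X_K)\to A_0(X)$ and the pullback $A_0(X)\to A_0(X_K)$ shows that $N_{K/k}\circ \pi^*$ is multiplication by $[K:k]$, so the cokernel of $\pi^*$ is killed by $[K:k]$; since the structural property ``direct sum of a divisible group and a finite (or torsion) group'' is stable under passing to subgroups and quotients by finite-index considerations up to the prime-to-$p$ divisibility already available from Saito--Sato, it suffices to prove the statement for $X_K = \Kum(A)$ over $K$. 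So we may assume $X = \Kum(A)$ over $k$ itself.

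Next I would set up the comparison between $A_0(\Kum(A))$ and $T(A)$. Writing $\widetilde{A}\to A$ for the blow-up at the $2$-torsion and $\widetilde{A}\to \Kum(A)$ for the quotient by the induced involution, one gets correspondences inducing maps between the Chow groups of zero-cycles. Concretely, the involution $\iota=[-1]$ acts on $A_0(A)=T(A)$, and the quotient map together with pushforward/pullback identifies $A_0(\Kum(A))$ with the coinvariants (or invariants, up to $2$-torsion) $T(A)_{\iota}$ — more precisely one gets an isomorphism after inverting $2$, and an exact-sequence control of the $2$-primary discrepancy coming from the blown-up $2$-torsion points (whose contributions are finite). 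Since $\iota$ acts as $-1$ on... no: $[-1]$ acts as the identity on $T(A)$ is false; rather, for an abelian surface the involution $[-1]^*$ acts on $T(A)$ and the relevant quotient is $T(A)/(1-\iota^*)$, and one checks this quotient still has the desired structural property as soon as $T(A)$ does, because divisibility and finiteness pass to such quotients. The $2$-primary torsion bookkeeping (exceptional divisors of the $16$ blown-up points, each a rational curve contributing nothing to $A_0$ beyond finite groups) is routine but must be done carefully.

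With these reductions in place, part (i) follows by applying \cite[Theorem 1.2]{Gazaki/Leal2022}: if $\phi\colon E_1\times E_2\to A$ is an isogeny with at most one factor potentially supersingular, then after a further finite base change (absorbed into the descent step above) we may assume good-type hypotheses hold on the $E_i$, and that theorem gives that $T(E_1\times E_2)$ is the sum of its maximal divisible subgroup and a finite group; the isogeny $\phi$ induces maps on $A_0$ with kernel and cokernel killed by $\deg\phi$, so $T(A)$ inherits the property, and hence so does $A_0(X)$ by the Kummer comparison. For part (ii), $A$ is geometrically simple with split semi-ordinary reduction, and here I would invoke the relevant result (in the spirit of \cite[Theorem 1.1]{Raskind/Spiess2000}, adapted to abelian surfaces) giving that $A_0(A)$ is the sum of its maximal divisible subgroup and a \emph{torsion} group — the weakening from ``finite'' to ``torsion'' reflecting that the finiteness of the prime-to-$p$ part is available from Saito--Sato but the $p$-primary part is only controlled up to a torsion group in the semi-ordinary (non-split-multiplicative) situation. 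Pushing this through the Kummer correspondence yields the stated conclusion for $A_0(X)$.

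The main obstacle I expect is the Kummer comparison step — specifically, proving cleanly that $A_0(\Kum(A))$ and the coinvariants of $T(A)$ under $[-1]^*$ agree up to finite $2$-primary groups, and that the structural property (divisible $\oplus$ finite/torsion) is genuinely preserved under both this comparison and the finite base change / isogeny maneuvers. The finite base change in particular is delicate: passing from $k$ to $K$ can a priori change the divisible part, and one must argue that the ``finite summand'' statement descends, which requires knowing that the maximal divisible subgroup of $A_0(X_K)$ has finite-index image or a controlled intersection with $\pi^* A_0(X)$ — this is where the prime-to-$p$ divisibility of Saito--Sato does the real work, and the $p$-primary part needs the structural input from the abelian-variety theorems cited. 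Everything else (the norm/transfer formalism, the blow-up Chow group computations, the isogeny estimates) is standard.
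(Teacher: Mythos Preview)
Your overall strategy matches the paper's: descend from $K$ to $k$ by a transfer argument, compare $A_0(\Kum(A))$ with $T(A)$, then invoke known results for abelian varieties (\cite{Gazaki/Leal2022} for (i), and an abelian-surface result for (ii)). Two places where the paper is both simpler and more precise than your sketch:

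\emph{The Kummer comparison.} You propose identifying $A_0(\Kum(A))$ with the $[-1]^\star$-coinvariants of $T(A)$ up to $2$-primary discrepancies, and flag this as the main obstacle. The paper bypasses the $[-1]^\star$-action entirely. Since $\Alb_X=0$, the pullback $\pi^\star\colon A_0(X)\to A_0(A')$ lands automatically in $T(A')$; combined with $\pi_\star\pi^\star=2$ this gives $2\cdot A_0(X)\subseteq \pi_\star(T(A'))\simeq\pi_\star(T(A))$. That one-line observation replaces your whole coinvariants analysis. The structural conclusion is then obtained not by tracking how ``divisible $\oplus$ finite'' behaves under quotients in the abstract, but concretely: push forward the maximal divisible subgroup $D$ of $T(E_1\times E_2)$ (resp.\ $T(A)$) along $\pi_\star\phi_\star$, and observe that $A_0(X)/\pi_\star\phi_\star(D)$ is annihilated by $2\deg(\phi)\cdot|T(E_1\times E_2)_{\nd}|$, hence of bounded torsion; finiteness then follows from \cite[Theorem~8.1]{CT93}, which says $\CH_0(Y)_n$ is finite for any surface $Y$ over a $p$-adic field.

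\emph{Part (ii) and the descent.} The input for (ii) is not Raskind--Spiess (that paper treats products of curves) but \cite[Theorem~1.2]{Gazaki2019}, which gives $T(A)_{\nd}$ torsion for a geometrically simple abelian surface with split semi-ordinary reduction. The descent-to-$k$ step you worry about is handled by citing \cite[Lemma~3.3]{Gaz22}: to prove $T(Y)_{\nd}$ has bounded torsion one may freely pass to a finite extension, so the Saito--Sato prime-to-$p$ input is not needed separately here.
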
 

See \autoref{almostord} for the definition of split semi-ordinary reduction. 
To our knowledge this is the first piece of evidence for \autoref{localconj2} for $K3$ surfaces. Verifying the conjecture for abelian surfaces isogenous to products of elliptic curves is also new and produces infinitely many new examples of Jacobians of genus $2$ curves for which \autoref{localconj2} is true (see \autoref{abelianexamples}). A weaker result was obtained in \cite{Gaz22}. 

An important special class of geometrically Kummer $K3$ surfaces are the diagonal quartics, which are hypersurfaces in $\mathbb{P}_k^3$ given by an equation of the form 
\[D: x_0^4+a_1x_1^2+a_2x_2^4+a_3x_3^4=0,\]
with $a_1,a_2,a_3\in k^\times$. Mizukami (\cite{Mizukami78}) showed that any such surface becomes  isomorphic over an explicit finite extension to the Kummer surface $\Kum(A)$ of an abelian surface isogenous to a self-product $C\times C$ of a CM elliptic curve (see \autoref{quarticsection}). 

The following Corollary can be thought of as the $p$-analog of \cite[Corollary 0.10]{Saito/Sato2010}. For simplicity we state it here only for Kummer surfaces and diagonal quartics. 
\begin{cor}\label{pdivisibility2} (cf.~\autoref{unramified1}, \autoref{pdivisible}) Let $k$ be a finite \textbf{unramified} extension of $\Q_p$, where $p$ is an odd prime. Let $X=\Kum(E_1\times E_2)$ be the Kummer surface associated to a product of two elliptic curves with good ordinary or almost ordinary reduction (cf. \autoref{almostord}). Then the group $A_0(X)$ is divisible. If moreover $p\equiv 1\mod 4$, then the same is true for the diagonal quartic surface $D$ assuming that $D$ has good reduction. 
\end{cor}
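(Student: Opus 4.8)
The plan is to combine \autoref{localtheointro}(i) with an explicit control on the finite summand of $A_0(X)$ in the good ordinary case, and then transfer the conclusion for $X = \Kum(E_1 \times E_2)$ to the diagonal quartic $D$ via Mizukami's description. First I would invoke \autoref{localtheointro}(i): since $E_1, E_2$ have good ordinary or almost ordinary reduction (in particular neither is potentially good supersingular), $A_0(X)$ is the direct sum of its maximal divisible subgroup $D(X)$ and a finite group $F(X)$, and likewise $A_0(A) \cong D(A) \oplus F(A)$ for $A = E_1 \times E_2$. So the entire content is to show $F(X) = 0$. The strategy is to show the stronger statement that $A_0(X)$ is divisible by every prime $\ell$, which (given the structure result) forces $F(X) = 0$. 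For $\ell \ne p$ this is already part of the Saito--Sato theorem quoted in the introduction (\cite[Theorem 0.3]{Saito/Sato2010}), since $X$ has potentially good reduction and $k$ is unramified; so the crux is $p$-divisibility.

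For $p$-divisibility I would pass through the abelian surface. The Kummer construction gives a correspondence (the quotient map $A \dashrightarrow \Kum(A)$ together with the blow-up of the $16$ two-torsion points) inducing, after inverting $2$, an isomorphism $A_0(X)\{p\} \cong A_0(A)^{\pm}\{p\}$ onto the $(-1)$-eigenspace; since $p$ is odd this is harmless, so it suffices to prove $A_0(A)$ is $p$-divisible, and for a product of elliptic curves one further reduces — via the known structure of $A_0(E_1 \times E_2)$ in terms of $A_0(E_i)$ and a ``Somekawa'' $K$-group $K(k; E_1, E_2)$ — to showing that each $A_0(E_i)$ and the relevant $K$-group are $p$-divisible. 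For an elliptic curve $E$ over an unramified $k$ with good ordinary reduction, $A_0(E) \cong E(k)$, and the formal group $\widehat{E}(\mathfrak{m})$ is a finite-index subgroup isomorphic (since $k/\Q_p$ is unramified and $p$ is odd, so $e < p-1$) to the additive group $\mathfrak{m}$ via the formal logarithm, hence uniquely $p$-divisible; the quotient $E(k)/\widehat{E}(\mathfrak{m})$ injects into $\widetilde{E}(\F_q)$, whose prime-to-$p$ order makes it $p$-divisible as well, so $E(k)$ is $p$-divisible. The almost ordinary case is handled by the same logarithm argument after the base change that is built into the definition in \autoref{almostord}, keeping track that unramifiedness of $k$ is preserved in the relevant range. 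The $p$-divisibility of the Somekawa $K$-group should follow from the corresponding filtration/symbol computations in the cited works \cite{Gazaki/Leal2022, Gazaki/Hiranouchi2021}, whose hypotheses are met here.

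Finally, for the diagonal quartic $D$: by Mizukami's theorem, when $p \equiv 1 \bmod 4$ the CM elliptic curve $C$ appearing in $A = C \times C$ can be taken to have good \emph{ordinary} reduction at $p$ (ordinariness of a CM curve at $p$ is exactly the condition that $p$ splits in the CM field, here $\Q(i)$, i.e.\ $p \equiv 1 \bmod 4$), and the hypothesis that $D$ has good reduction ensures the relevant twist and the field of definition of the Kummer isomorphism do not introduce ramification obstructing the previous argument. Then $X_K = \Kum(C \times C)$ falls under part (i), and a norm/transfer argument using $[K:k]$ together with a trace-injectivity statement for $A_0$ under the finite extension $K/k$ (again available from the cited $p$-adic results) descends $p$-divisibility from $A_0(X_K)$ to $A_0(D)$.

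The main obstacle I expect is precisely this last descent step together with bounding ramification of $K/k$: one needs the field $K$ over which $D$ becomes a Kummer surface, combined with any quadratic twist needed to identify $D$ with $\Kum(A)$ rather than a twist thereof, to remain harmless for $p$-divisibility — either $K/k$ is unramified, or it is at worst tamely ramified of degree prime to $p$, so that the trace map $A_0(X_K) \to A_0(X)$ has image of index prime to $p$ and the formal-logarithm argument still applies after base change. Checking that Mizukami's explicit extension has this property when $\mathrm{char}\, \widetilde{D} = 0$ and $p \equiv 1 \bmod 4$ is the delicate point; everything else is an assembly of the structure theorem and standard formal-group estimates.
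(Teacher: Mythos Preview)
There is a genuine error at the heart of your $p$-divisibility argument. You assert that for an elliptic curve $E$ with good ordinary reduction over unramified $k/\Q_p$ the group $E(k)$ is $p$-divisible, arguing that the formal logarithm identifies $\widehat{E}(\mathfrak{m})$ with the additive group $\mathfrak{m}$, ``hence uniquely $p$-divisible''. But $(\mathfrak{m},+)\cong(\mathcal{O}_k,+)\cong \Z_p^{[k:\Q_p]}$ as an abelian group, so $\mathfrak{m}/p\mathfrak{m}\cong\kappa\neq 0$; the formal group is torsion-free, not $p$-divisible. (Your claim that $\widetilde{E}(\F_q)$ has order prime to $p$ is also unwarranted in the ordinary case---anomalous primes exist---but the formal-group error is already fatal.) Consequently $A_0(A)$ is \emph{not} $p$-divisible in general, and your reduction ``it suffices to prove $A_0(A)$ is $p$-divisible'' cannot succeed.

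The misstep is upstream: you are aiming at the wrong group. Because $\Alb_X=0$, the pullback $\pi^\star:A_0(X)\to A_0(A')$ lands in the Albanese kernel $T(A')$, so what the Kummer correspondence actually yields (see \autoref{quotients} and \autoref{coprimeto2}) is, for odd $p$, a surjection $T(A)/p\twoheadrightarrow A_0(X)/p$, not one from $A_0(A)/p$. Thus you only need $p$-divisibility of $T(A)$---equivalently of the Somekawa group $K(k;E_1,E_2)$---and this is precisely \cite[Theorem~1.4]{Gazaki/Hiranouchi2021}, which the paper simply cites. The paper then invokes \cite{Matsumoto2015} to confirm that $X$ itself has good reduction, applies Saito--Sato for all $\ell\neq p$ (in particular $\ell=2$, since $p$ is odd) to kill the residual $2$-primary piece coming from the degree-$2$ cover, and concludes. (Your eigenspace remark is also reversed: the image of $\pi^\star$ lies in the $[-1]_\star$-invariants.) For the diagonal quartic the paper does not attempt your descent along $K/k$; instead \autoref{explicit2} bounds the exponent of $A_0(D)_{\nd}$ by a power of $2$ times $|T(C_L\times C_L)_{\nd}|$, and then good reduction of $D$ together with Saito--Sato at $\ell=2$ finishes.
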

A similar result was obtained in \cite[Theorem 1.4]{Gazaki/Hiranouchi2021} for products of elliptic curves. Such divisibility results are essential for studying Weak Approximation over number fields. 

If we remove the unramified assumption, the result is in general very far from being true. Suppose that $X=\Kum(E_1\times E_2)$ satisfying the reduction hypotheses of \autoref{localtheointro}. We denote by $A_0(X)_{\nd}$ the non-divisible summand of $A_0(X)$, which by our theorem is a finite group. Our methods show that we have a surjection 
\[T(E_1\times E_2)_{\nd}\{2'\}\twoheadrightarrow A_0(X)_{\nd}\{2'\}\] of the subgroups consisting of elements of order coprime to $2$. 
In \autoref{localresults} we describe many cases when this map is an isomorphism. The key ingredient is an isomorphism between the transcendental Brauer groups of $E_1\times E_2$ and $X$ obtained in \cite[Theorem 2.4]{SZ2012} (see \autoref{mainlocal}). Our results show:
\begin{enumerate}
\item[1.] In all reduction cases there is a finite extension $K/k$ over which the group $A_0(X_K)_{\nd}\{2'\}$ is nontrivial,  and it can have order divisible by arbitrarily large powers of $p$. 
\item[2.] In the case of good ordinary reduction  $A_0(X)_{\nd}\{2'\}$ can often be fully computed. 
\end{enumerate}


\subsection{Weak Approximation for zero-cycles} We next focus on local-to-global principles for zero-cycles. We are interested in the following conjecture, which informally states that the Brauer-Manin obstruction is the only obstruction to Weak Approximation for zero-cycles. This is due to Colliot-Th\'{e}l\`{e}ne and Sansuc for geometrically rational varieties and to Kato and Saito for general varieties. 
\begin{conj}\label{localtoglobal1} (\cite[Conjecture A] 
	{Colliot-Thelene/Sansuc1981}, \cite[Section 7]{Kato/Saito1986}, see also \cite[Section 1.1]{Wittenberg2012})  Let $Y$ be a smooth projective and geometrically connected variety over a number field $F$. The following complex is exact
\begin{equation}\label{complex0}\widehat{\CH_0(Y)}\stackrel{\Delta}{\longrightarrow}
\widehat{\CH_{0,\mathbf{A}}(Y)}\rightarrow\Hom(\Br(Y),\Q/\Z).
\end{equation}
\end{conj} 

Here for an abelian group $M$ we denoted by $\widehat{M}=\varprojlim_n M/n$ the profinite completion of $M$. Moreover, if $\Omega$ (resp. $\Omega_f$) is the set of places (resp. finite places) of $F$, then the adelic Chow group $\CH_{0,\mathbf{A}}(Y)$ is essentially $\prod_{v\in\Omega_f}\CH_0(Y_v)$ with a small $2$-torsion contribution from the infinite real places. 

Exactness of \eqref{complex0} would imply exactness of the degree zero piece, 

\begin{equation}\label{complex00}\widehat{A_0(Y)}\stackrel{\Delta}{\longrightarrow}
\widehat{A_{0,\mathbf{A}}(Y)}\rightarrow\Hom(\Br(Y)/\Br(F),\Q/\Z),
\end{equation} and if $Y(F)\neq\emptyset$, the converse is also true. In this article we focus on the complex \eqref{complex00}. 
\begin{defn}\label{Brauerset} We define the Brauer set for  zero-cycles of degree $0$ and denote it $(\widehat{A_{0,\mathbf{A}}(Y)})^{\Br}$ to be the subgroup of the adelic group $\widehat{A_{0,\mathbf{A}}(Y)}$ consisting of elements that are orthogonal to the Brauer group.  
\end{defn}
Suppose that $\Pic(Y\otimes_F\overline{F})$ is finitely generated and torsion-free and that the quotient $\Br(Y)/\Br(F)$ is finite. Two important classes of varieties that satisfy these assumptions are rationally connected varieties and $K3$ surfaces. The former have geometric genus zero, a property which in general implies that both the global and the local Chow groups are small. In fact, it follows by \cite{Kollar/Szabo2003} that $A_0(Y_v)=0$ for every finite place $v$ of good reduction.
 For this class of varieties there are far reaching results regarding the validity of \autoref{localtoglobal1} due to Liang (\cite{Liang2013}) and Harpaz, Wittenberg (\cite{Harpaz/Wittenberg2018}), who used the fibration method to reduce the question to the one for rational points. 
 On the other hand, if $Y$ is a 
$K3$ surface, then $H^2(Y,\mathcal{O}_Y)\neq 0$, which forces all the local Chow groups $\CH_0(Y_v)$ to be enormous. 
Ieronymou (\cite[Theorem 1.3]{Ieronymou2021}) used the fibration method  for $K3$ surfaces to ``approximate modulo $n$", assuming that the Brauer-Manin obstruction is the only obstruction to Weak Approximation for rational points on every finite extension $L/F$ (see \autoref{weaksection} for the precise statement). 
 The following questions arise when $Y$ is a $K3$ surface.

\begin{que}\label{Q1} Does $(\widehat{A_{0,\mathbf{A}}(Y)})^{\Br}$ consist of genuine local zero-cycles? 
\end{que}
Note that if \autoref{localconj2} is true for $Y_v$ for every finite place $v$, then in this case this reads $A_0(Y_v)=D_v\oplus P_v$, where $D_v$ is a divisible group and $P_v$ a finite group. Then $\widehat{A_0(Y_v)}\simeq P_v$, and hence the entire adelic group $\widehat{A_{0,\mathbf{A}}(Y)}$ will consist of genuine zero-cycles.
\begin{que}\label{Q2} Which places contribute nontrivial components in $(\widehat{A_{0,\mathbf{A}}(Y)})^{\Br}$? Are they only finitely many? 
\end{que} 

\autoref{pdivisibility2} suggests that only places of bad reduction and ramified places of good reduction might contribute nontrivially, although there is still an infinite set of good reduction places for which we don't have enough information. 
Such a finiteness expectation is also supported by general expectations about zero-cycles over number fields combined with the finiteness of the Brauer group $\Br(Y)/\Br(F)$ (see \autoref{BB}). 
\begin{que}\label{Q3} Do the places of good reduction contribute to the Brauer set? 
\end{que}

\begin{que}\label{Q4} Can we obtain any unconditional evidence for \autoref{localtoglobal1} for $K3$ surfaces without assuming anything about the arithmetic of rational points? 
\end{que}

In \autoref{weaksection} we give answers to all these questions and obtain evidence for \autoref{localtoglobal1} for geometrically Kummer $K3$ surfaces by considering one prime at a time. Namely, fixing a prime number $p$, one can explore the exactness of the complex 
\begin{equation}\label{complexp0}\varprojlim\limits_{n}A_0(Y)/p^n\stackrel{\Delta}{\longrightarrow}
\prod_{v\in\Omega}\varprojlim\limits_{n}A_0'(Y_v)/p^n\rightarrow\Hom(\Br(Y)\{p\}/\Br_0(Y)\{p\},\Q/\Z). 
\end{equation} 
For the definition of $A_0'(Y_v)$ see \autoref{weaksection}.
Exactness of the complex \eqref{complexp0} for every prime $p$ will imply exactness of \eqref{complex00}. 
Our first theorem is the following.  

\begin{theo}\label{localglobalintro} (cf.~\autoref{localglobal1}) 
Let $X$ be a $K3$ surface over a number field $F$. Suppose there exists a finite extension $K/F$ over which $X$ becomes isomorphic to the Kummer surface associated to an abelian surface $A$ over $K$  admitting an isogeny $E_1\times E_2\xrightarrow{\phi} A$ from a product of elliptic curves. 
 Then there exists an infinite set $T$ of rational primes $p$ for which the following are true:
\begin{enumerate}
\item[(a)]  For every $p\in T$ the middle term of the complex \eqref{complexp0} is the following finite group \[\prod_{v|p}A_0(X_v)_{\nd}\{p\}.\] In particular, it consists of genuine local zero-cycles. 
\item[(b)] Suppose that for every finite extension $L/F$ the set $X(L)$ is dense in the Brauer set $X(\mathbf{A}_L)^{\Br(X_L)}$. Then for every $p\in T$ the complex \eqref{complexp0} is exact.
\item[(c)] For all but finitely many $p\in T$ the middle term of the complex \eqref{complexp0} vanishes and for these primes the density assumption  can be dropped. 
\end{enumerate} 
\end{theo}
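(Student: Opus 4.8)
The plan is to reduce everything to the $p$-adic input already assembled, namely \autoref{localtheointro} and \autoref{pdivisibility2}, and to organize the primes according to the local reduction behaviour of the elliptic factors $E_1,E_2$ of $A$ over the finite extension $K/F$. First I would fix the finite set $S$ of places of $K$ consisting of the archimedean places, the places dividing $2$, the places where $\phi$, $A$, or the isomorphism $X_K\cong\Kum(A)$ has bad behaviour, and the places of bad reduction of $E_1$ and $E_2$; away from $S$ everything in sight has good reduction. Then I would take $T$ to be (the set of rational primes $p$ lying under) the primes $v$ of $K$ that are unramified over $F$, that split completely (or at least are of residue degree compatible with the hypotheses of \autoref{pdivisibility2}) in the field of definition of the relevant automorphisms, and such that $E_1$ and $E_2$ both have good \emph{ordinary} reduction at $v$; by Chebotarev (applied to the extension cut out by the $\ell$-torsion together with the ordinary locus, using that the ordinary primes have density $1$ for each $E_i$ and intersecting the two density-$1$ sets) this $T$ is infinite. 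One must also arrange, by discarding finitely many primes, that $p$ is odd and that $p$ does not divide the degree of $\phi$ nor the orders of the finite groups appearing in the torsion of $\Br(X)/\Br_0(X)$; this is where "all but finitely many" in (c) comes from.

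For part (a): for $v\nmid p$, $v\notin S$, the surface $X_v$ has good reduction, so by (the $K3$ version of) \cite[Corollary 0.10]{Saito/Sato2010} the group $A_0(X_v)$ is divisible by $p$ (indeed by any integer prime to the residue characteristic), hence $\varprojlim_n A_0'(X_v)/p^n=0$; for $v\in S$ but $v\nmid p$ one still has $p$-divisibility of $A_0(X_v)$ after possibly enlarging $S$ by the finitely many bad primes, again killing the inverse limit, using that $p$ was chosen to avoid the residue characteristics of places in $S$ — wait, that is not automatic, so instead I would invoke \autoref{localtheointro}(i) (valid at every finite place since the isogeny $E_1\times E_2\to A$ persists) to conclude $A_0(X_v)$ is divisible-plus-finite with the finite part of order prime to $p$ once $p$ avoids the finitely many primes dividing those finite orders. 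For $v\mid p$ the hypotheses of \autoref{pdivisibility2} are met by construction of $T$ (unramified, $p$ odd, good ordinary reduction of $E_1,E_2$), so $A_0(X_v)$ is $p$-divisible except that \autoref{pdivisibility2} as stated gives full divisibility — so in fact $\varprojlim_n A_0'(X_v)/p^n$ vanishes there too, and then (c) is immediate and (a) should really read that the middle term is $\prod_{v\mid p}A_0(X_v)_{\mathrm{nd}}\{p\}$ only for the primes in $T$ where one uses instead \autoref{localtheointro} to see it is this finite $p$-group; the correct split is: on a cofinite subset of $T$ the ordinary-unramified hypothesis forces vanishing (giving (c)), while on the remaining finitely many primes of $T$ one falls back to finiteness from \autoref{localtheointro}. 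In all cases the middle term is the stated finite group of genuine local cycles because a finite group injects into its own profinite completion.

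For part (b): once the middle term of \eqref{complexp0} is the finite group $\prod_{v\mid p}A_0(X_v)_{\mathrm{nd}}\{p\}$, exactness becomes the assertion that every element of this finite group which is orthogonal to $\Br(X)\{p\}/\Br_0(X)\{p\}$ lies in the image of the global $\varprojlim_n A_0(X)/p^n$. Here I would feed in the conditional result of Ieronymou \cite[Theorem 1.2]{Ieronymou2021}: under the stated density hypothesis for rational points on all finite extensions $L/F$, one can "approximate modulo $n$" for zero-cycles on the $K3$ surface $X$, which is precisely the surjectivity onto the Brauer-orthogonal part of $\prod_v A_0(X_v)/n$; taking the inverse limit over powers of $p$ and using that for $v\nmid p$ the local factor is already zero (or killed in the limit) leaves exactly the finite $p$-part at the places above $p$, and the approximation statement gives that this is hit by the global group. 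For part (c), for the cofinitely many $p\in T$ where the middle term vanishes, exactness of the complex is the trivial statement that the zero group maps onto the kernel of $0\to\Hom(\dots)$, so no density hypothesis is needed.

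The main obstacle I expect is \textbf{the bookkeeping at places $v\in S$ with $v\nmid p$}: one needs that $\varprojlim_n A_0'(X_v)/p^n=0$ for these finitely many "junk" places, and this is genuinely nontrivial at places of bad reduction of $X_v$, where $A_0(X_v)$ need not be $p$-divisible. The resolution is to observe that $\phi:E_1\times E_2\to A$ gives, at every finite place, a surjection (up to the finitely many primes dividing $\deg\phi$) from $A_0((E_1\times E_2)_v)_{\mathrm{nd}}\{p\}$ onto $A_0(X_v)_{\mathrm{nd}}\{p\}$ as in the discussion preceding \autoref{pdivisibility2}, together with Saito--Sato's structural theorem \cite[Theorem 0.3]{Saito/Sato2010} that $A_0(X_v)$ is (divisible by $p$)-plus-(finite); hence $\varprojlim_n A_0'(X_v)/p^n$ is a finite $p$-group, equal to $A_0(X_v)_{\mathrm{nd}}\{p\}$, and this vanishes once $p$ is chosen outside the finite set of primes dividing $\prod_{v\in S}\#A_0(X_v)_{\mathrm{nd}}$ — a condition that only removes finitely many $p$ from $T$ and so is harmless for an "infinite set $T$" statement. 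Once this reduction is in place, parts (a)--(c) follow by assembling the $p$-adic results exactly as above.
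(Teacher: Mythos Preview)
Your approach is essentially the paper's: build $T$ by imposing finitely many congruence and reduction conditions, kill the $v\nmid p$ contributions to the middle term, use \autoref{localtheointro} at $v\mid p$ for finiteness and \autoref{pdivisibility2}/\autoref{pdivisible} at the unramified ordinary places for the vanishing in (c), then invoke Ieronymou for (b). Two small points of divergence are worth noting.

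First, your treatment of the ``junk'' places $v\in S$, $v\nmid p$ via the general Saito--Sato structure theorem (so that $A_0(X_v)_{\nd}\{p\}$ is finite for trivial reasons, and one simply excludes the finitely many $p$ dividing $\prod_{v\in S}|A_0(X_v)_{\nd}|$) is cleaner than what the paper does: the paper splits $\Lambda$ into potentially-good and potentially-bad parts, passes to a degree-dividing-$24$ extension to acquire semistable reduction, and uses a norm--restriction argument together with \autoref{padic1} only on the potentially-bad part. Both work; yours is shorter. (Your side condition that $p$ avoid the order of $\Br(X)/\Br_0(X)$ is unnecessary and the paper does not impose it.)

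Second, in (b) your phrase ``taking the inverse limit over powers of $p$'' is imprecise: Ieronymou's theorem produces, for each $n$, a global $z_n$ depending on $n$, and these need not be compatible. The paper's fix is exactly what the finiteness in (a) buys you: once $\prod_{v\mid p}A_0(X_v)_{\nd}\{p\}$ has order $p^N$, a \emph{single} application of Ieronymou with $n=p^N$ yields one global $z\in A_0(X)$ whose image under $\Delta$ already equals $(z_v)_v$ in $\prod_v\varprojlim_n A_0'(X_v)/p^n$, since the latter is computed mod $p^N$. You should replace the limit argument with this one-shot application.
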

Part (b) of the above theorem can be thought of as a ``passing to the limit" version of the result of Ieronymou for geometrically split Kummer $K3$ surfaces (see \autoref{geomKummer}).

In \autoref{goodredplaces} we construct examples of Kummer surfaces for which the ramified places of good ordinary reduction contribute nontrivially to the Brauer set for zero-cycles of degree $0$, which gives an affirmative answer to  \autoref{Q3}. This shows that the behavior for $K3$ surfaces is quite different from the one for rationally connected varieties. 

\begin{prop}\label{Br0} (cf.~\autoref{Brorthogonal}) Let $E$ be an elliptic curve over $\Q$ such that $E\otimes_{\Q}\overline{\Q}$ has complex multiplication by the full ring of integers  of a quadratic imaginary field $K$. Let $A=E\times E$ and $X=\Kum(A)$. Let $p\geq 5$ be a prime that splits completely in $K$ and is coprime to the conductor $\mathfrak{n}$ of $E$. Suppose that the reduction $\overline{E_p}$ of $E$ modulo $p$ satisfies  $|\overline{E_p}(\F_p)|=p$. Then there exists a finite extension $L/K$ of degree $p-1$ totally ramified at the places of $K$ lying above $p$ such that for the Kummer surface $X_L$ the following are true:
\begin{enumerate}
\item[(i)] The Brauer group $\Br(X_L)\{p\}/\Br_0(X_L)\{p\}$ vanishes. 
\item[(ii)] The middle term of the complex \eqref{complexp0} becomes $\displaystyle\prod_{v|p}A_0(X_{L_v})_{\nd}\{p\}=\Z/p$. 
\end{enumerate}
\end{prop}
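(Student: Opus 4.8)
\emph{Structure of the argument.} The statement breaks into the Brauer computation (i), the local Chow group computation (ii), and a preliminary reduction of the middle term of \eqref{complexp0}. For the reduction I would argue as in the proof of \autoref{localglobal1}(a): for a finite place $v\nmid p$ the group $A_0(X_{L_v})$ is, by \cite[Theorem 0.3]{Saito/Sato2010}, a $p$-divisible group plus a finite group, the finite part being trivial at places of good reduction by \cite[Corollary 0.10]{Saito/Sato2010} and, at the finitely many places above $2$ or the conductor $\mathfrak n$, of order prime to $p\ge 5$ because the CM curve $E$ has potentially good reduction everywhere (so the finite part is controlled by component groups). Hence $\varprojlim_n A_0'(X_{L_v})/p^n=0$ for $v\nmid p$, the archimedean places contribute nothing as $p$ is odd, and since $p$ splits completely in $K$ and $L/K$ is totally ramified above $p$ there are exactly two places $v\mid p$, which accounts for the two factors.

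\emph{Part (ii).} At each prime above $p$ one takes $L_v=\mathbb{Q}_p(\zeta_p)$: a totally tamely ramified extension of $\mathbb{Q}_p$ of degree $p-1$, with residue field $\mathbb{F}_p$, containing $\mu_p$ but not $\mu_{p^2}$. Over $K_v=\mathbb{Q}_p$ itself $A_0(X_v)$ is already $p$-divisible by \autoref{pdivisibility2}, so the nontrivial finite summand is genuinely produced by the ramification. Via the surjection $T((E\times E)_{L_v})_{\nd}\{2'\}\twoheadrightarrow A_0(X_{L_v})_{\nd}\{2'\}$ and the isomorphism criterion in the good ordinary case recorded in \autoref{localresults}, it suffices to compute $T((E\times E)_{L_v})_{\nd}\{p\}$. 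The structure theorem of \autoref{localresults} (building on \cite{Gazaki/Leal2022, Gazaki/Hiranouchi2021}) expresses this group in terms of the reduction $\tilde E(\mathbb{F}_p)$ and the ramification of $L_v$: the hypothesis $|\tilde E(\mathbb{F}_p)|=p$ gives $\tilde E(\mathbb{F}_p)\{p\}\cong\mathbb{Z}/p$, the ramification index $e=p-1$ contributes exactly one further cyclic factor of order $p$, and tameness rules out anything larger. So $A_0(X_{L_v})_{\nd}\{p\}\cong\mathbb{Z}/p$ at each of the two places above $p$, and the middle term of \eqref{complexp0} is $(\mathbb{Z}/p)^2$.

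\emph{Part (i).} Since $X$ is a $K3$ surface, $\Br_1(X_L)/\Br_0(X_L)$ embeds in $H^1(L,\NS(\overline X))$; as $\NS(\overline X)$ is a trivial $G_{L(E[2])}$-module, this group is killed by $[L(E[2]):L]$, which divides $|\mathrm{GL}_2(\mathbb{F}_2)|=6$, hence has trivial $p$-part for $p\ge 5$. By \autoref{mainlocal} the transcendental quotient $\Br(X_L)/\Br_1(X_L)$ injects into $\Br(\overline X)\{p\}^{G_L}\cong\Br(\overline{E\times E})\{p\}^{G_L}$, so it remains to make the latter vanish. Writing $H^2_{\mathrm{et}}(\overline{E\times E},\mathbb{Z}_p(1))=\wedge^2 H^1(\overline{E\times E},\mathbb{Z}_p)(1)$ and using that, $p$ being split in $K$, the Tate module of $E$ decomposes into two lines with characters $\psi_{\mathfrak p},\psi_{\bar{\mathfrak p}}$ satisfying $\psi_{\mathfrak p}\psi_{\bar{\mathfrak p}}=\chi_{\mathrm{cyc}}$, one checks that after removing the four-dimensional Néron--Severi part (which is $K$-rational, hence $G_K$-trivial) one is left with two lines carrying the characters $\eta:=\psi_{\bar{\mathfrak p}}/\psi_{\mathfrak p}$ and $\eta^{-1}$. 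Because $E$ has good ordinary reduction at $p$, one of $\psi_{\mathfrak p},\psi_{\bar{\mathfrak p}}$ is unramified at the relevant prime above $p$ and the other restricts to $\chi_{\mathrm{cyc}}$ on inertia there, so $\bar\eta:=\eta\bmod p$ has order exactly $p-1$; consequently $\Br(\overline{E\times E})\{p\}^{G_L}=0$ as soon as $\bar\eta|_{G_L}\ne 1$, i.e. as soon as $L$ differs from the cyclic degree-$(p-1)$ extension $M:=K^{\ker\bar\eta}$. One then chooses $L$: since $p$ splits in $K$ one has $[K(\zeta_p):K]=p-1$ automatically, and when $K(\zeta_p)\ne M$ (the generic situation) one takes $L=K(\zeta_p)$; otherwise one takes, by Grunwald--Wang, a cyclic degree-$(p-1)$ extension of $K$ with completions $\mathbb{Q}_p(\zeta_p)$ at the two primes above $p$ and $L\ne M$. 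In all cases $L$ is totally ramified above $p$ with the local behaviour used in (ii), so both (i) and (ii) hold for this $L$.

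\emph{Main obstacle.} The heart of the proof is (i): identifying $\Br(\overline{E\times E})\{p\}$ as exactly the two-dimensional Galois module with characters $\eta^{\pm1}$ and, crucially, verifying that $\bar\eta$ has full order $p-1$ — which is precisely where good ordinary reduction at $p$ enters, through the shape of the local Galois representation — and then exhibiting a \emph{single} extension $L$ that is simultaneously of degree $p-1$, distinct from $M$, and totally ramified above $p$ with completion $\mathbb{Q}_p(\zeta_p)$. A secondary difficulty is pinning the ramified local summand in (ii) down to exactly $\mathbb{Z}/p$ (neither $0$ nor a larger $p$-group), which forces one to use the fine structure theorem of \autoref{localresults} together with the numerics $e=p-1$ and $|\tilde E(\mathbb{F}_p)|=p$ in tandem.
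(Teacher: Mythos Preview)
Your approach is essentially correct but differs from the paper's in one structural way and contains several imprecisions worth flagging.

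\textbf{Different choice of $L$.} The paper does not use $K(\zeta_p)$ or a Grunwald--Wang construction. It takes the canonical extension $L=K(E[\eta])$, where $p=\eta\bar\eta$ in $\mathcal{O}_K$ and $[\eta]$ reduces to Frobenius; by \cite[Corollary 5.20]{Rub99} this is cyclic of degree $p-1$ and totally ramified above $\mathfrak p,\bar{\mathfrak p}$. With this $L$, both (i) and (ii) are direct consequences of \cite[Theorem 4.2]{Gazaki2022weak} for the abelian surface $A_L$, together with the Skorobogatov--Zarhin comparison $\Br(X_L)_n/\Br_1(X_L)_n\simeq\Br(A_L)_n/\Br_1(A_L)_n$ for odd $n$ and \autoref{CMcase}. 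Your route---computing the Galois characters $\eta^{\pm1}$ on the transcendental lattice and then avoiding the single ``bad'' field $M$---also works, but it forces a case split and an appeal to Grunwald--Wang that the paper's canonical $L$ sidesteps. What your approach buys is that the Brauer vanishing is proved from scratch rather than imported from \cite{Gazaki2022weak}.

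\textbf{Imprecisions in (ii).} You take $L_v=\Q_p(\zeta_p)$ and then invoke a ``structure theorem'' from \autoref{localresults}. The result you actually need is \autoref{CMcase} (or \autoref{injectivity2}), and its hypotheses must be checked: one needs $E[p]\subset E(L_v)$, which is \emph{not} automatic from $|\overline{E_p}(\F_p)|=p$ alone. It holds here because the CM forces the exact sequence $0\to\widehat{E}[p]\to E[p]\to\overline{E_p}[p]\to 0$ to split, so $\widehat{E}[p]\cong\mu_p$ by Weil-pairing duality with the trivial \'etale quotient, whence $\Q_p(E[p])=\Q_p(\zeta_p)$. This is exactly the ``Claim'' the paper isolates and proves; you skip over it. Your heuristic that ``$|\tilde E(\F_p)|=p$ contributes one $\Z/p$ and $e=p-1$ contributes another'' does not match how the cited results actually compute $T(A_{L_v})_{\nd}$.

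\textbf{The reduction away from $p$.} Your remark that the finite part at bad places is ``controlled by component groups'' is not the right mechanism. The correct argument, as in the proof of \autoref{localglobal1}, is that CM forces potentially good reduction everywhere, so the set $\Lambda_2$ of potentially multiplicative places is empty, the integer $M$ of that proof equals $6$, and $p\ge 5$ gives $p\nmid 2M$; then $p$-divisibility at each $v\nmid p$ follows from \cite[Corollary 0.10]{Saito/Sato2010} after a degree-prime-to-$p$ base change.
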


This proposition follows easily by an analogous result we proved in \cite[Theorem 1.6]{Gazaki2022weak} for the product $A=E\times E$ and by the explicit computations we carry out in \autoref{computations1}. In \autoref{example} we give  examples of pairs $(E,p)$ of elliptic curves $E_{/\Q}$ and primes $p$ satisfying the assumptions of the proposition for each of the quadratic imaginary fields $K=\Q(\sqrt{D})$ with $D=-3, -11, -19, -43, -67, -163$. 

We close this article by addressing \autoref{Q4} in the situation of \autoref{Br0}. The question becomes whether we can unconditionally lift the group $\displaystyle\prod_{v|p}A_0(X_{L_v})_{\nd}\{p\}=\Z/p$ to global zero-cycles. 
 We show that this question can be reduced to the corresponding one for the abelian surface $A=E\times E$, for which in \cite[Theorem 1.7]{Gazaki2022weak} we described sufficient conditions for elliptic curves with positive rank over $\Q$ that guarantee such a lifting. This allows us to find explicit examples of Kummer surfaces satisfying a nontrivial local-to-global principle, in the sense that global zero-cycles need to be constructed. This gives the first \textbf{unconditional evidence} for \autoref{localtoglobal1} for $K3$ surfaces. 
The following example is deduced by a case study carried out in the Appendix of \cite{Gazaki2022weak} written by A. Koutsianas. 
\begin{exmp}\label{weakevidence} Consider the family of elliptic curves given by the Weierstrass equations $\{E_{n}:y^2=x^3-2+7n, n\in\Z\}$ and the prime $p=7$. Note that for each $n\in\Z$ the pair $(E_n,p)$ satisfies the assumptions of \autoref{Br0}. A SAGE computation shows that for about $87\%$ of elliptic curves with rank $1$ in the family $E_n$ with $n\in[-5000, 5000]$ the Kummer surface $X_n=\Kum(E_n\times E_n)$ satisfies unconditionally a local-to-global principle for zero-cycles of degree $0$ after extending to the appropriate degree $6$ extension of $\Q(\zeta_3)$. Many more examples of this flavor have been recently constructed in \cite{Wills25}, where also a heuristic argument is given that explains the percentages that we get. 
\end{exmp}

 


\subsection{Notation}
For a variety $X$ over a field $k$ and an extension $K/k$ we will denote by $X_K:=X\otimes_k K$ the base change to $K$. For a closed point $x\in X$ we will denote by $k(x)$ the residue field of $x$. 
 For an abelian group $B$ and an integer $n$ we will denote by $B_n:=\ker(B\xrightarrow{n}B)$ the $n$-torsion subgroup of $B$ and by $B/n:=B/nB$ the $n$-cotorsion. For a prime $p$ we will denote by $B\{p\}$ the $p$-primary torsion subgroup of $B$. In the special case of an abelian variety $A$ over a field $k$ we will instead use the notation $A[n]$ for the $n$-torsion. For an abelian group $M$ we will denote  by $\widehat{M}:=\varprojlim\limits_{n}M/n$ the profinite completion of $M$. For a homomorphism $f:A\to B$ of abelian groups we will denote by $\widehat{f}:\Hom(B,\Q/\Z)\to\Hom(A,\Q/\Z)$ the dual homomorphism given by precomposition with $f$. 
\subsection{Acknowledgment} The first author's research was partially supported by the NSF grant DMS-2302196. The second author was partially supported by a CRM--ISM postdoctoral fellowship. We are truly grateful to Professor Olivier Wittenberg for sharing some excellent insights with us; \autoref{maxunr} in particular was an outcome of our discussions. We are thankful to Professor Evis Ieronymou for many helpful discussions and to the anonymous referee whose suggestions helped improve the paper. 

\vspace{3pt}
\section{The local Conjecture}\label{section2}
In this section we will prove \autoref{localtheointro}. We start with some general results about zero-cycles on Kummer surfaces. 
For  most of this section unless otherwise mentioned $k$ will be an arbitrary base field. 
\subsection{Background and notation} 
Let $Y$ be a smooth projective variety over $k$. The Chow group $\CH_0(Y)$ of zero-cycles has a filtration \[\CH_0(Y)\supset A_0(Y)\supset T(Y)\supset 0,\]
where $A_0(Y)=\CH_0(Y)^{\deg=0}$ is the kernel of the degree map, $\CH_0(Y)\xrightarrow{\deg}\Z$, and $T(Y)=A_0(Y)^{\alb_Y=0}$ is the kernel of the Albanese map, $A_0(Y)\xrightarrow{\alb_Y} \Alb_Y(k)$.
When $Y$ is a surface, it follows by \cite[Proposition 6.3]{CT79} that the group $A_0(Y)$ is a birational invariant. This is a key ingredient in obtaining results for Kummer surfaces. 
\begin{exmp} Let $Y$ be a $K3$ surface over $k$. Since the Albanese variety is dual to the Picard variety $\Pic^0(Y)$ and $Y$ is simply connected, it follows that $\Alb_Y=0$, and hence $A_0(Y)=T(Y)$ (see for example \cite[p.~241]{Huy16}).  
\end{exmp}
\begin{exmp}\label{abexmp} Let $A$ be an abelian variety over $k$. Then $\Alb_A=A$ and the Albanese map has the following simple description 
\begin{eqnarray*}
\alb_A: && A_0(A)\to A(k)\\
&& \sum_{x\in A}n_x[x]\mapsto\sum_{x\in A}n_x N_{k(x)/k}(x),
\end{eqnarray*}
where $N_{k(x)/k}:A(k(x))\to A(k)$ is the usual norm map on abelian varieties.
Here we are abusing notation: for a closed point $x\in A$, we also denote by $x$ the induced $k(x)$-rational point of $A$.
\end{exmp}

\subsection{Kummer surfaces}\label{Kummer1}
Let $A$ be an abelian surface over $k$. Let $A[2]$ be its subgroup of $2$-torsion elements. Let $A':=\Bl_{A[2]}A$ be the surface obtained by $A$ by blowing-up the subscheme $A[2]$  and let $\rho:A'\to A$ be the corresponding birational morphism. The preimage of $A[2]$ is a $1$-dimensional scheme $V\subseteq A'$ that splits over $\bar{k}$ into $16$ rational curves (one lying over each $\bar{k}$-point of $A[2]$), and $\rho$ induces an isomorphism \[\rho:A'- V\xrightarrow{\simeq} A- A[2].\]\black
Let $X_0:=A/\langle -1\rangle$ be the quotient of $A$ by the negation involution. We have a degree two cover $\pi:A\to X_0$. The singular locus of $X_0$ is $\pi(A[2])$, which splits into sixteen singularities over $\bar{k}$. Let $X$ be the surface obtained from $X_0$ by blowing up the singular locus. The variety $X:=\Kum(A)$ is a $K3$ surface that is known as \textit{the Kummer surface} associated to $A$. We will denote by $W$ the preimage of $\pi(A[2])$ in $X$. It follows by the universal property of blow-ups that $\pi$ induces a degree $2$ cover $\pi':A'\to X$ with ramification locus $V$, sending $V$ isomorphically onto $W$ (see \cite[p.~118]{SZ2012}).

We consider the Chow group of zero-cycles $\CH_0(X)$. 
The proper morphism $\pi:A'\to X$ induces a push-forward homomorphism \[\pi_\star: A_0(A')\to A_0(X).\] 
Moreover, the birational morphism $\rho$ induces an isomorphism (cf.~\cite[Proposition 6.3]{CT79}) 
\[\rho_\star:A_0(A)\xrightarrow{\simeq} A_0(A').\] 
Since $\Alb_{A'}=\Alb_A=A$, we have a commutative diagram
\[\begin{tikzcd}
0\ar[r] & T(A')\ar[r] & A_0(A')\ar{r}{\alb_{A'}}\ar{d}{\rho_\star} & A(k)\ar{r}\ar{d} & 0\\
0\ar[r] & T(A)\ar[r] & A_0(A)\ar{r}{\alb_{A}} & A(k)\ar{r} & 0
\end{tikzcd}\] with exact rows and the two vertical maps isomorphisms. Thus, by a diagram chase and the five lemma we obtain an isomorphism $\rho_\star:T(A')\xrightarrow{\simeq}T(A)$.

	\begin{prop}\label{quotients} Let $A$ be an abelian surface over  $k$ and $X=\Kum(A)$ its associated Kummer surface. The quotient $A_0(X)/\pi_\star(T(A'))$ is $2$-torsion. 
	\end{prop}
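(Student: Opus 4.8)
The plan is to compare zero-cycles on $X$ with zero-cycles on $A'$ via the degree-$2$ cover $\pi':A'\to X$. The key structural fact is that $\pi'$ has ramification locus $V$, which maps isomorphically onto $W\subseteq X$, and that away from $W$ (equivalently away from $A[2]$) the map $\pi$ is an \'etale double cover $A- A[2]\to X_0- \pi(A[2])$. So I would first split an arbitrary class in $A_0(X)$ into a part supported on $W$ and a part supported away from $W$, using that $A_0(X)$ is generated by closed points, and $W$ together with its complement covers $X$.

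For a closed point $x\in X$ not lying on $W$, its preimage under $\pi'$ is either a single closed point $x'$ with $[k(x'):k(x)]=2$ (inert case) or two closed points, each with residue field $k(x)$ (split case); in either case the projection formula gives $\pi'_\star(\text{preimage cycle}) = 2[x]$ or, in the split case, $\pi'_\star[x'] = [x]$ for each preimage directly — so $2[x]$ is always in the image of $\pi'_\star$. For a closed point $w\in W$, since $\pi'$ restricts to an isomorphism $V\xrightarrow{\simeq} W$, there is a closed point $v\in V$ with $k(v)=k(w)$ and $\pi'_\star[v]=[w]$ exactly; thus the $W$-part of any cycle is literally in the image of $\pi'_\star$. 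Combining, $2\cdot A_0(X)\subseteq \pi'_\star(A_0(A'))$: given $z=\sum n_x[x]\in A_0(X)$, I write $2z$ as $\pi'_\star$ of an explicit cycle on $A'$, then adjust degrees. The degree bookkeeping works because $\pi'$ has degree $2$, so $\deg(\pi'_\star \alpha) = $ (well, $\pi'$ is not finite flat everywhere, but on the relevant loci the degree count is $2$ off $W$ and $1$ on $W$) — I would handle this by first reducing to cycles supported off $W$ using a moving lemma / the explicit description of $W$ as a union of rational curves, or simply by noting that $A_0$ is generated by points in general position.

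The remaining point is to pass from $\pi'_\star(A_0(A'))$ to $\pi'_\star(T(A'))$. Since $X$ is a $K3$ surface, $A_0(X)=T(X)$, and using $\rho_\star:A_0(A)\xrightarrow{\simeq}A_0(A')$ together with the snake-lemma argument already in the text giving $\rho_\star:T(A')\xrightarrow{\simeq}T(A)$, I have $A_0(A') = T(A')\oplus(\text{a subgroup mapping isomorphically to }A(k))$ — more precisely the extension $0\to T(A')\to A_0(A')\xrightarrow{\alb} A(k)\to 0$. For $\alpha\in A_0(A')$, the class $\pi'_\star\alpha\in A_0(X)=T(X)$ has trivial Albanese; and the Albanese of $\pi'_\star\alpha$ is computed from $\alb_{A'}(\alpha)\in A(k)$ via the map induced by $\pi'$ on Albanese varieties — but $\Alb_X=0$, so this is automatic and gives no constraint. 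Instead I argue directly: for $\alpha$ with $\alb_{A'}(\alpha)=a\in A(k)$, I can modify $\alpha$ by the class of $[a']-[0']$ for a suitable lift, replacing $\alpha$ by an element of $T(A')$ without changing $\pi'_\star\alpha$ modulo the image of $T(A')$ — this is where I must check that the "correction term" $\pi'_\star([a']-[0'])$ is itself $2$-torsion in $A_0(X)$, which follows because $2([a']-[0'])$ pushes forward (via the $[-1]$-symmetry: $[a']+[(-a)'] - 2[0']$ pushes to $0$ on $X_0$, hence is supported on exceptional curves on $X$, hence lies in a subgroup that... ) — this symmetry argument is the crux. The main obstacle I anticipate is precisely this last step: carefully controlling $\pi'_\star$ of differences of degree-zero $0$-cycles modulo $2$, i.e. showing the cokernel of $T(A')\to A_0(X)$ is killed by $2$ rather than merely by $2$ up to something supported on the rational curves $W$ (which contribute nothing to $A_0$ since rational curves have trivial $A_0$, but one must splice this in cleanly via the localization sequence for Chow groups of the open complement $X- W$).
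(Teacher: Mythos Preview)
Your pointwise computation in the first half is essentially reconstructing the flat pullback $\pi'^\star:\CH_0(X)\to\CH_0(A')$ by hand: for $x\in X\setminus W$ the ``preimage cycle'' you write down is exactly $\pi'^\star[x]$, and the identity you are using is just $\pi'_\star\pi'^\star=2$. The paper's proof uses this map directly and finishes in two lines, because the passage from $A_0(A')$ to $T(A')$ that causes you trouble is automatic once you phrase things this way: by contravariant functoriality of the Albanese map there is a commutative square
\[
\begin{tikzcd}
A_0(X)\ar{r}{\alb_X}\ar{d}{\pi'^\star} & \Alb_X(k)=0\ar{d}\\
A_0(A')\ar{r}{\alb_{A'}} & A(k),
\end{tikzcd}
\]
so $\pi'^\star(A_0(X))\subseteq T(A')$ outright. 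Then $2A_0(X)=\pi'_\star\pi'^\star(A_0(X))\subseteq\pi'_\star(T(A'))$ and you are done.

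The gap in your argument is exactly the part you flagged: your correction by $[a']-[0']$ only shows that $\pi'_\star([a']-[0'])$ is $2$-torsion \emph{modulo} $\pi'_\star(T(A'))$, so following your route literally you would end up proving $4A_0(X)\subseteq\pi'_\star(T(A'))$, not $2A_0(X)$. The symmetry observation $\pi'_\star([a'])=\pi'_\star([(-a)'])$ is correct and does give $2\pi'_\star([a']-[0'])\in\pi'_\star(T(A'))$, but you have already spent a factor of $2$ getting into $\pi'_\star(A_0(A'))$ in the first place. The fix is not to spend that first factor of $2$ at all: the pullback $\pi'^\star z$ of a degree-$0$ cycle already lies in $T(A')$, so no correction term is needed. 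All of your moving-lemma considerations around $W$ and the localization sequence become unnecessary as well, since $\pi'^\star$ is defined on all of $\CH_0(X)$.
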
 
	\begin{proof}
		We have a pullback morphism $\pi^\star:A_0(X)\to A_0(A')$, and $\pi_\star\circ\pi^\star$ equals multiplication by $2$ on $A_0(X)$. By functoriality of albanese maps with respect to pullback (see for example \cite[Proof of Theorem 5.2]{GR25}), there is a commutative square
		\[\begin{tikzcd}
			A_0(X)\arrow[r,"\alb_X"]\arrow[d,"\pi^\star"] & 0\arrow{d} \\
			A_0(A')\arrow[r,"\alb_{A'}"] & A(k).
		\end{tikzcd}\]
		Thus the image of $A_0(X)$ under $\pi^\star$ is contained in $T(A')$, so the image of $A_0(X)$ under multiplication by $2$ is contained in $\pi_\star(T(A'))$.
		
	\end{proof}

From now on we will abuse notation and identify $A_0(A)$ with $A_0(A')$ under the isomorphism $\rho_\star$, so we have $\pi_\star:A_0(A)\to A_0(X)$ and $2A_0(X)/\pi_\star(T(A))=0$. \autoref{quotients} yields the following Corollary. 

\begin{cor}\label{coprimeto2} 
	 Let $A$ be an abelian surface over $k$ and $X=\Kum(A)$ its associated Kummer surface. 
	Let $n\geq 1$ an odd integer. Then $\pi_\star$ induces a surjection $\pi_\star:T(A)/n\twoheadrightarrow A_0(X)/n$. 
\end{cor}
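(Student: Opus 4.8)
The plan is to deduce the corollary formally from \autoref{quotients}, with no extra geometric input. First I would note that $\pi_\star\colon T(A)\to A_0(X)$ obviously carries $nT(A)$ into $nA_0(X)$, hence descends to a homomorphism $\pi_\star\colon T(A)/n\to A_0(X)/n$; the only assertion to prove is surjectivity.

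For that, set $C:=A_0(X)/\pi_\star(T(A))$. By \autoref{quotients} (in the reformulated version recorded immediately after its proof, using the identification $A_0(A)\cong A_0(A')$ via $\rho_\star$), the group $C$ is annihilated by $2$. Now apply the right-exact functor $-\otimes_{\Z}\Z/n$ to the exact sequence $T(A)\xrightarrow{\pi_\star}A_0(X)\to C\to 0$; since $M\otimes_{\Z}\Z/n=M/n$ for any abelian group $M$, this yields an exact sequence $T(A)/n\to A_0(X)/n\to C/n\to 0$. Finally, because $C$ is $2$-torsion and $n$ is odd, multiplication by $n$ on $C$ agrees with multiplication by $n\bmod 2=1$, so it is the identity; in particular $C/n=C/nC=0$. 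Exactness then forces $\pi_\star\colon T(A)/n\to A_0(X)/n$ to be surjective, as claimed.

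There is essentially no obstacle here: all the content sits in \autoref{quotients}, and the corollary is a one-line dévissage exploiting that the cokernel of $\pi_\star$ on Albanese kernels is $2$-torsion while $n$ is odd. The only point requiring a moment's care is making sure one uses the exact sequence of $T(A)\to A_0(X)$ rather than of $A_0(A)\to A_0(X)$, i.e. that the quotient being killed by $2$ is indeed $A_0(X)/\pi_\star(T(A))$ and not merely $A_0(X)/\pi_\star(A_0(A))$ — but this is exactly what \autoref{quotients} provides.
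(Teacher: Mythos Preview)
Your proposal is correct and is exactly the argument the paper has in mind: the paper states the corollary without proof, deriving it immediately from \autoref{quotients}, and your tensoring argument simply spells out that implication. The only content is indeed that the cokernel is $2$-torsion, so tensoring with $\Z/n$ for odd $n$ kills it.
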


\vspace{2pt}

\subsection{Proof of \autoref{localtheointro}}\label{localconjsection}
We now focus on \autoref{localconj2}. Throughout this subsection $k$ will be a finite extension of the $p$-adic field $\Q_p$ for a prime $p$. We will denote by $\kappa$ the residue field of $k$. 

\begin{notn} For an abelian group $B$ we will denote by $B_{\dv}$ its maximal divisible subgroup
 and by $B_{\nd}$ the non-divisible quotient $B/B_{\dv}$. Since divisible groups are injective $\Z$-modules, we have a decomposition $B\simeq B_{\dv}\oplus B_{\nd}$. Thus, we can view $B_{\nd}$ as a subgroup of $B$. 
 \end{notn} Using the above notation we can restate  \autoref{localconj2} as follows.
 
 \begin{conj}\label{conjnd} For a smooth projective variety $Y$ over $k$ the group $T(Y)_{\nd}$ is finite. 
 \end{conj}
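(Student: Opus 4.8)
The statement \autoref{conjnd} is merely \autoref{localconj2} rephrased, so the realistic target --- and what the rest of this section carries out --- is to verify it for the $K3$ surface $X$ and the abelian surface $A$ of \autoref{localtheointro} (with the weaker ``torsion'' conclusion in case~(ii)). My plan is to peel the geometry off in two stages, the Kummer construction and then an isogeny, and reduce to the already--known case of products of elliptic curves.

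\emph{Stage 1: from $X$ to $A$.} For zero-cycles the degree-$2$ cover $\pi\colon A'\to X$ provides both $\pi_\star$ and $\pi^\star$. One has $\pi_\star\pi^\star=2$ on $A_0(X)$, and $\pi^\star$ lands in $T(A')$ (as in the proof of \autoref{quotients}); since $[-1]$ acts as the identity on the Albanese kernel of an abelian surface (Beauville's decomposition of $\CH_0$), one gets $\pi^\star\pi_\star=1+[-1]^\star=2$ on $T(A)\cong T(A')$. Thus $\pi_\star$ and $\pi^\star$ exhibit $A_0(X)$ and $T(A)$ as abelian groups ``isogenous of degree $2$'', so $A_0(X)\otimes\Z[1/2]\cong T(A)\otimes\Z[1/2]$. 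Consequently, once $T(A)$ is known to be the sum of a divisible group and a finite group, so is $A_0(X)\otimes\Z[1/2]$. If moreover the residue characteristic $p$ is odd, then $2$ is prime to $p$, so $A_0(X)$ is $2$-divisible by \cite{Saito/Sato2010}; then $A_0(X)\{2\}$ is a divisible --- hence injective --- subgroup, the sequence $0\to A_0(X)\{2\}\to A_0(X)\to A_0(X)\otimes\Z[1/2]\to 0$ splits, and $A_0(X)$ itself is the sum of a divisible group and a finite group. Feeding in only ``$T(A)_{\nd}$ is torsion'' instead of ``finite'' gives the case~(ii) conclusion.

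\emph{Stage 2: from $A$ to $E_1\times E_2$, and descent.} An isogeny $\phi\colon E_1\times E_2\to A$ of degree $d$ gives $\phi_\star,\phi^\star$ on $\CH_0$ with $\phi_\star\phi^\star=d$, compatibly with the Albanese maps, so $d\,T(A)\subseteq\phi_\star(T(E_1\times E_2))$; starting from the honest ``divisible $\oplus$ finite'' decomposition of $T(E_1\times E_2)$ supplied by \cite[Theorem 1.2]{Gazaki/Leal2022} under the ``at most one supersingular factor'' hypothesis, the standard isogeny-transfer argument for Chow groups of abelian varieties over $p$-adic fields upgrades this to the same decomposition of $T(A)$; in case~(ii) one invokes instead the split semi-ordinary input for geometrically simple $A$ directly. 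Finally, $X$ and $A$ live a priori only over a finite extension $K/k$ with $X_K\cong\Kum(A)$, so I would run both stages over $K$ and then use a standard restriction--corestriction argument (the composite $A_0(X)\xrightarrow{\res_{K/k}}A_0(X_K)\to A_0(X)$ is multiplication by $[K:k]$) to descend the conclusion to $k$.

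\emph{Main obstacle.} Everything above is essentially formal once the residue characteristic is odd; the genuinely hard case is $p=2$, where Stage~1 breaks down --- the degree of $\pi$ equals $2$, so the comparison with $A$ no longer controls the $2$-primary part $A_0(X)\{2\}$, and $A_0(X)$ need not be $2$-divisible. For that part I expect one must argue directly with the $2$-adic cohomology of $X$, or pass through the pairing of $A_0(X)_{\nd}$ with the transcendental Brauer group --- which is finite for a $K3$ surface and, by \cite[Theorem 2.4]{SZ2012} (\autoref{mainlocal}), comparable to that of $E_1\times E_2$ --- in order to pin down $A_0(X)\{2\}_{\nd}$. This $2$-adic step is where the real work sits.
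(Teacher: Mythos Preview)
Your overall architecture --- pass from $X$ to $A$ via $\pi$, from $A$ to $E_1\times E_2$ via $\phi$, and descend along the finite extension by restriction--corestriction --- is exactly the paper's. The divergence is in Stage~1, and it is there that you manufacture a difficulty that does not exist.

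The paper never invokes $\pi^\star\pi_\star=2$ on $T(A)$, Beauville's decomposition, or an isomorphism $A_0(X)\otimes\Z[1/2]\cong T(A)\otimes\Z[1/2]$. It uses only the one-sided inclusion of \autoref{quotients} (namely $\pi_\star\pi^\star=2$, hence $2A_0(X)\subseteq\pi_\star(T(A))$), pushes forward the divisible piece to get a divisible subgroup $D:=\pi_\star(T(A)_{\dv})\subseteq A_0(X)$, and observes that $A_0(X)/D$ is killed by $2\cdot|T(A)_{\nd}|$. A bounded-torsion quotient by a divisible subgroup forces $D$ to be the maximal divisible subgroup, so $A_0(X)_{\nd}$ is of bounded torsion; finiteness then comes from \cite[Theorem 8.1]{CT93} (finiteness of $\CH_0(Y)_m$ for a surface over a $p$-adic field). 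This argument is completely uniform in the residue characteristic --- there is no special case $p=2$.

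Your ``main obstacle'' is therefore a phantom created by your more elaborate route, and that route has genuine errors. Saito--Sato does \emph{not} say $A_0(X)$ is $2$-divisible when $p$ is odd: it gives a decomposition into a summand divisible by every $m$ prime to $p$ and a \emph{finite} summand, which may have even order. Hence your assertion that $A_0(X)\{2\}$ is divisible is unfounded, and your sequence $0\to A_0(X)\{2\}\to A_0(X)\to A_0(X)\otimes\Z[1/2]\to 0$ need not be exact on the right (localization is surjective only once $2$-divisibility is already known). None of this is needed: drop the attempt to isolate the $2$-primary part and argue directly with the inclusion $2A_0(X)\subseteq\pi_\star(T(A))$ as above. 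Stage~2 and the descent step are fine and match the paper.
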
 
 When $Y$ is a $K3$ surface this amounts to proving that the group $A_0(Y)_{\nd}$ is finite. 
 Before proving the main theorem of this section, we recall the following definitions about reduction types of abelian surfaces over $p$-adic fields (see \cite[p.~1]{LazdaSkorobogatov}, \cite[p.~14]{Raskind/Spiess2000})).
\begin{defn}\label{almostord} Let $A$ be an abelian surface over $k$ and $\mathcal{A}$ be its N\'{e}ron model over the ring of integers $\mathcal{O}_k$ of $k$. 
\begin{enumerate}
\item We say that $A$ has split semi-ordinary reduction if the connected component $(\mathcal{A}\otimes_{\mathcal{O}_k} \kappa)^\circ$ of the special fiber of $\mathcal{A}$ is an extension of an ordinary abelian variety by a split torus. 
\item Suppose that $A$ has good reduction. Let $\overline{A}$ be the reduction of $A$ over $\kappa$. We say that $A$ has good ordinary reduction if $\overline{A}[p](\overline{\kappa})\simeq(\Z/p\Z)^2$, and $A$ has good almost ordinary reduction if $\overline{A}[p](\overline{\kappa})\simeq\Z/p\Z$. 
\end{enumerate} 
\end{defn}
In the special case when $A=E_1\times E_2$ is a product of elliptic curves with good reduction, $A$ has good ordinary or almost ordinary reduction if and only if at most one of the elliptic curves has good supersingular reduction.  

\begin{theo}\label{padic1} 
	Let $k$ be a finite extension of the $p$-adic field $\Q_p$. 
	Let $X$ be a $K3$ surface over $k$. Suppose there exists a finite extension $K/k$ such that the base change $X_K$ becomes isomorphic to the Kummer surface $\Kum(A)$ associated to an abelian surface $A$ over $K$. 
\begin{enumerate}
	\item[(i)] Suppose that there exists an isogeny $\phi:E_1\times E_2\to A$ from a product of elliptic curves at most one of which has potentially good supersingular reduction. Then \autoref{conjnd} is true for both $A$ and $X$. 
	\item[(ii)] Suppose that the abelian variety $A$ is geometrically simple and it has split semi-ordinary reduction. Then the group $A_0(X)_{\nd}$ is torsion.  
\end{enumerate} 
\end{theo}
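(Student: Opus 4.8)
\emph{Strategy.} The plan is to deduce \autoref{padic1} from known local results about the Albanese kernel of abelian varieties, via a chain of reductions that are purely formal once one grants that $A_0(X)/n$, $A_0(X_K)/n$ and $T(A)/n$ are finite for every $n\geq 1$ (Saito--Sato \cite{Saito/Sato2010}). Throughout I will use two elementary facts about the splitting $B\cong B_{\dv}\oplus B_{\nd}$: if $B'\subseteq B$ with $B/B'$ finite, then $B_{\dv}=B'_{\dv}$ --- a finite-index subgroup of a divisible group is all of it --- so $B_{\nd}$ is finite (resp.\ torsion) if and only if $B'_{\nd}$ is; and for any homomorphism $f\colon B\to C$ one has $f(B_{\dv})\subseteq C_{\dv}$, so $f$ induces $B_{\nd}\to C_{\nd}$, and in particular a quotient of a group with finite (resp.\ torsion) non-divisible part again has finite (resp.\ torsion) non-divisible part.

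\emph{Reductions for (i).} First descend from $K$ to $k$: with $d=[K:k]$, restriction and norm $A_0(X)\to A_0(X_K)\to A_0(X)$ compose to multiplication by $d$, hence induce maps $A_0(X)_{\nd}\to A_0(X_K)_{\nd}\to A_0(X)_{\nd}$ composing to $d$; since the image $d\cdot A_0(X)_{\nd}$ is then a homomorphic image of $A_0(X_K)_{\nd}$ and the cokernel $A_0(X)_{\nd}/d$ is a quotient of the finite group $A_0(X)/d$, finiteness of $A_0(X_K)_{\nd}$ forces finiteness of $A_0(X)_{\nd}$. So we may assume $X=\Kum(A)$. Next, \autoref{quotients} gives $2A_0(\Kum(A))\subseteq\pi_\star(T(A))\subseteq A_0(\Kum(A))$; since $A_0(\Kum(A))/2$ is finite both inclusions have finite index, so by the first fact $A_0(\Kum(A))_{\nd}$ is finite iff $\pi_\star(T(A))_{\nd}$ is, and by the second fact the latter is a quotient of $T(A)_{\nd}$. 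Thus it suffices to prove $T(A)_{\nd}$ is finite --- which is also precisely \autoref{conjnd} for $A$. Finally, enlarging $K$ (harmlessly, since this preserves the isomorphism $X_K\cong\Kum(A)$, the isogeny $\phi$, and the reduction hypothesis) we may assume $E_1,E_2$ have semistable reduction over $K$, so each is of good ordinary, good supersingular, or split multiplicative type, with at most one supersingular. Choosing $\psi\colon A\to E_1\times E_2$ with $\psi\phi=[n]$, $n=\deg\phi$: by \autoref{pullback1} the pullback $\phi^\star$ restricts to $T(A)\to T(E_1\times E_2)$, and the push-forward $\phi_\star$ restricts to $T(E_1\times E_2)\to T(A)$ (compatibility with the Albanese map), with $\phi_\star\phi^\star=n$ on $T(A)$; hence $nT(A)\subseteq\im(\phi_\star)\subseteq T(A)$ has finite index ($T(A)/n$ being finite), so $T(A)_{\nd}$ is finite iff $\im(\phi_\star)_{\nd}$ is, and the latter is a quotient of $T(E_1\times E_2)_{\nd}$. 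The finiteness of $T(E_1\times E_2)_{\nd}$ in each of these reduction cases is \cite[Theorem~1.2]{Gazaki/Leal2022} (good reduction, not both supersingular) and follows from the split semi-ordinary analysis of \cite{Raskind/Spiess2000} when a factor is multiplicative.

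\emph{Case (ii).} Here $A$ is geometrically simple, so there is no reduction to a product and the isogeny step above is skipped; one instead invokes directly that $T(A)_{\nd}$ is \emph{torsion} for an abelian surface with split semi-ordinary reduction. This follows from the structure of the Albanese kernel in terms of the formal group of the N\'eron model: split semi-ordinary reduction makes the local pieces governing $T(A)/T(A)_{\dv}$ divisible up to torsion (cf.\ \cite{Raskind/Spiess2000}), but without a good-reduction hypothesis one cannot bound the exponent of this torsion --- which is why the conclusion is ``torsion'' rather than ``finite''. Feeding $T(A)_{\nd}$ torsion into the Kummer and descent reductions of the previous paragraph (verbatim, with ``torsion'' in place of ``finite'') yields that $A_0(X)_{\nd}$ is torsion.

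\emph{Main obstacle.} Everything above is soft bookkeeping with divisible subgroups once Saito--Sato's finiteness of $A_0(-)/n$ is granted; the genuine arithmetic content --- and the step I expect to be the main difficulty --- is the local input for abelian varieties, namely finiteness of $T(E_1\times E_2)_{\nd}$ when at most one factor is supersingular (resp.\ torsion-ness of $T(A)_{\nd}$ for geometrically simple split semi-ordinary $A$), whose proofs rest on formal-group computations, Galois cohomology of the associated $p$-divisible groups, and the Mordell--Weil property of Somekawa $K$-groups, together with the care needed to cover the potentially-multiplicative reduction types after passing to the extension $K$.
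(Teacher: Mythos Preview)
Your strategy matches the paper's almost exactly: reduce to $X_K=\Kum(A)$ via norm--restriction, use \autoref{quotients} to pass from $A_0(\Kum(A))$ to $T(A)$, use the isogeny $\phi$ to pass from $T(A)$ to $T(E_1\times E_2)$, and then invoke the known local input. The paper packages the descent step as \cite[Lemma~3.3]{Gaz22}, but the content is your norm--restriction argument.

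There is one recurring technical slip. You assume throughout that $A_0(X)/n$ and $T(A)/n$ are finite for \emph{every} $n\geq 1$, and attribute this to Saito--Sato. But \cite{Saito/Sato2010} only gives this for $n$ coprime to $p$; finiteness of $A_0(X)/p^m$ for a general surface over a $p$-adic field is not known (it would follow from injectivity of the $p$-adic cycle class map, which is open). The paper sidesteps this by tracking \emph{bounded torsion} rather than \emph{finite}: once $|T(E_1\times E_2)_{\nd}|=N$, one shows directly that $T(A)/\phi_\star(T(E_1\times E_2)_{\dv})$ is $N\deg(\phi)$-torsion, then that $A_0(X)/\pi_\star(T(A)_{\dv})$ is $2N\deg(\phi)$-torsion, and similarly that passing from $X_K$ to $X$ multiplies the exponent by $[K:k]$. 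Finiteness is only invoked at the very end via \cite[Theorem~8.1]{CT93}, which says $\CH_0(Y)_m$ is finite for every $m$ --- so a bounded-torsion group $T(Y)_{\nd}$ is automatically finite. Your argument is easily repaired by this reorganization; the point is just that the correct finiteness input is finiteness of the $m$-\emph{torsion}, not of the $m$-\emph{cotorsion}.

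For part (ii), the input you need --- that $T(A)_{\nd}$ is torsion for a geometrically \emph{simple} abelian surface with split semi-ordinary reduction --- is not in \cite{Raskind/Spiess2000}, which treats only products of curves. The paper instead cites \cite[Theorem~1.2]{Gazaki2019}, with a footnote explaining why the ordinary hypothesis there can be relaxed to split semi-ordinary.
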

\begin{proof} 
	We first prove (i).  We start with some preliminary reductions. It follows by \cite[Theorem 8.1]{CT93} that for a smooth projective surface $Y$ over a $p$-adic field and for every positive integer $m$ the group $\CH_0(Y)_m$ is finite. Thus, if $Y$ is a surface, to verify \autoref{conjnd} it is enough to show that the group $T(Y)_{\nd}$ is of bounded torsion. Combining this with \cite[Lemma 3.3]{Gaz22},\footnote{This source assumes $p>2$, but this assumption is not used in the proof of the lemma in question.} to prove finiteness of $T(A)_{\nd}, A_0(X)_{\nd}$ we are allowed to extend to a finite extension $L/k$.  Thus, we may assume we have the following behavior over $k$:  
	\begin{itemize}
	\item $X\simeq\Kum(A)$ is isomorphic over $k$ to the Kummer surface associated to an abelian surface $A$ and there exists an isogeny $\phi:E_1\times E_2\to A$ from a product of two elliptic curves. 
	\item The elliptic curves $E_1,E_2$ have split semistable reduction and at most one of them has good supersingular reduction. 
	\end{itemize}
	Under the second assumption, it follows by \cite[Theorem 1.2]{Gazaki/Leal2022} that the group $T(E_1\times E_2)_{\nd}$ is finite. We show first that the group $T(A)_{\nd}$ is finite. The isogeny $\phi$ is a proper and flat map and thus it induces pushforward $\phi_\star: \CH_0(E_1\times E_2)\to\CH_0(A)$ and pullback $\phi^\star:\CH_0(A)\to\CH_0(E_1\times E_2)$ satisfying $\phi_\star\phi^\star=\deg(\phi)$. These restrict to homomorphisms $\phi_\star: T(E_1\times E_2)\to T(A)$ and $\phi^\star:T(A)\to T(E_1\times E_2)$ (see \cite[Proof of Theorem 5.2]{GR25}). Similarly to what we did in \autoref{Kummer1} we consider the subgroup $\phi_\star(T(E_1\times E_2))$ of $T(A)$. This contains the subgroup $\phi_\star(\phi^\star(T(A)))=\deg(\phi)T(A)$, and hence the quotient $T(A)/\phi_\star(T(E_1\times E_2))$ is $\deg(\phi)$-torsion.

	Since images of divisible groups are divisible, the group $D=\pi_\star(T(E_1\times E_2)_{\dv})$ is a divisible subgroup of $T(A)$. Consider the short exact sequence
	\[0\to D\to T(A)\to T(A)/D\to 0,\] which splits since $D$ is divisible. Set $N=|T(E_1\times E_2)_{\nd}|$ and let $z\in T(A)$. It follows that $\deg(\phi)z\in \phi_\star(T(E_1\times E_2))$ and $N\deg(\phi)z\in D$. Thus, the quotient $T(A)/D$ is of bounded torsion. We conclude that $D$ is the maximal divisible subgroup of $T(A)$ and the group $T(A)_{\nd}=T(A)/D$ is finite. 
	
	To show finiteness of $A_0(X)_{\nd}$ we argue exactly in the same way using the filtration \[\CH_0(X)\supset A_0(X)\supset\pi_\star(T(A))\supset 0.\] In this case we obtain that the quotient $A_0(X)/D$ is $2N\deg(\phi)$-torsion. 
	
	The proof of (ii) is similar. Namely we may assume that $X\simeq\Kum(A)$ is isomorphic over $k$ to the Kummer surface associated to an absolutely simple abelian surface with split semi-ordinary  reduction. Under these assumptions it follows by \cite[Theorem 1.2]{Gazaki2019}\footnote{This theorem assumes $p>2$ and $A$ ordinary, but the result is still true if $p=2$ or if $A$ is split semi-ordinary. In the proof of \cite[Lemma 2.4]{Gazaki2019}, one can note that even if the map $\Phi_2:F^2/F^3\hookrightarrow S_2(k;A)$ fails to be surjective, its image still contains $2S_2(k;A)$ even under these weaker conditions. Therefore $\img\Phi_2$ contains the maximal divisible subgroup of $S_2(k;A)$ as a finite index subgroup, so $\Phi_2$ still establishes an isomorphism between $F^2/F^3$ and the direct sum of a finite group and a divisible group. The conclusion of \cite[Lemma 2.4]{Gazaki2019} still holds, and the rest of the proof of \cite[Theorem 1.2]{Gazaki2019} can proceed as written.} that the group $T(A)_{\nd}$ is torsion. This implies that the group $\Fil^2(X)=\pi_\star(T(A))$ has a decomposition \[\Fil^2(X)=\pi_\star(T(A)_{\dv})\oplus \pi_\star(T(A)_{\nd})=D\oplus \pi_\star(T(A)_{\nd})\] into a divisible group and a torsion group. Then for every $z\in A_0(X)$ there exists some integer $n\geq 1$ such that $2nz\in D$ from where the claim follows. 

\end{proof}

\begin{rem}\label{abelianexamples} We note that verifying \autoref{conjnd} at this level of generality for an abelian surface $A$ isogenous to a product of elliptic curves is new. However, some special cases and weaker results have been previously obtained in \cite[Proposition 1.7]{Gaz22} and \cite[Corollary 1.5]{Gazaki/Hiranouchi2021}. \autoref{padic1} gives infinitely many new examples of abelian surfaces that satisfy \autoref{localconj2}. Namely, in \cite[Theorem 1.3]{gazakilove2023hyperelliptic} given a product $E_1\times E_2$ of two elliptic curves over an algebraic number field $F$  we constructed infinitely many genus $2$ curves $H$ defined over the algebraic closure $\overline{F}$ whose Jacobian is isogenous to $E_{1\overline{F}}\times E_{1\overline{F}}$. Since \autoref{padic1} allows geometric behavior and isogenies of arbitrary degree, we see that for any such Jacobian we can find infinitely many places of its field of definition such that the corresponding local abelian surface satisfies the reduction assumptions of \autoref{padic1}. We refer to \cite[Section 3.5.1]{gazakilove2023hyperelliptic} for explicit Weierstrass equations of genus $2$ curves having Jacobians isogenous to products of elliptic curves. There are also some older constructions over finite extensions of $\Q_p$ by Frey and Kani (\cite{FreyKani}).
\end{rem} 

In the case $X=\Kum(A)$ is itself the Kummer surface associated to a product $A=E_1\times E_2$ of elliptic curves, the proof of \autoref{padic1} yields the following more explicit result. 
\begin{cor}\label{explicit1} Let $X=\Kum(E_1\times E_2)$ be the Kummer surface associated to a product of two elliptic curves over the $p$-adic field $k$. Assume that both elliptic curves have split semistable reduction and at most one of them has good supersingular reduction. Then $A_0(X)_{\nd}$ is a finite group of exponent divisible by $2|T(E_1\times E_2)_{\nd}|$.
\end{cor}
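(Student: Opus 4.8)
The plan is to specialize the argument used for \autoref{padic1}(i) to the case $A=E_1\times E_2$, with the isogeny taken to be the identity, and to bookkeep the resulting exponent. Since both $E_1$ and $E_2$ have split semistable reduction with at most one of them supersingular, we are \emph{already} in the situation to which \cite[Theorem 1.2]{Gazaki/Leal2022} applies, with no need to pass to a finite extension; hence that theorem gives directly that $T(E_1\times E_2)_{\nd}$ is a finite group. Put $N:=|T(E_1\times E_2)_{\nd}|$. From the canonical splitting $T(E_1\times E_2)\simeq T(E_1\times E_2)_{\dv}\oplus T(E_1\times E_2)_{\nd}$ (divisible groups are injective $\Z$-modules) and the fact that $N$ kills the second summand, multiplication by $N$ maps $T(E_1\times E_2)$ onto $T(E_1\times E_2)_{\dv}$.

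First I would set $D:=\pi_\star\!\big(T(E_1\times E_2)_{\dv}\big)\subseteq A_0(X)$, where $\pi_\star\colon A_0(E_1\times E_2)\cong A_0(A')\to A_0(X)$ is the pushforward attached to the degree‑two cover $\pi'\colon A'\to X$ as in \autoref{Kummer1}. Because the image of a divisible group is divisible, $D$ is a divisible subgroup of $A_0(X)$. Now for arbitrary $z\in A_0(X)$, \autoref{quotients} gives $2z\in\pi_\star\!\big(T(E_1\times E_2)\big)$, say $2z=\pi_\star(t)$; then $2Nz=\pi_\star(Nt)\in\pi_\star\!\big(N\cdot T(E_1\times E_2)\big)=\pi_\star\!\big(T(E_1\times E_2)_{\dv}\big)=D$. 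Thus $2N\cdot A_0(X)\subseteq D$, i.e.\ the quotient $A_0(X)/D$ is annihilated by $2N$.

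The remaining point is to upgrade ``bounded exponent'' to ``finite''. Since $D$ is divisible, hence injective, the exact sequence $0\to D\to A_0(X)\to A_0(X)/D\to 0$ splits, so $A_0(X)/D$ is (isomorphic to) a direct summand, in particular a subgroup, of $A_0(X)$; being killed by $2N$ it embeds into the $2N$‑torsion subgroup $\CH_0(X)_{2N}$, which is finite by \cite[Theorem 8.1]{CT93} because $X$ is a smooth projective surface over a $p$‑adic field. Hence $A_0(X)/D$ is finite, so it has no nonzero divisible subgroup; combined with $D$ divisible this forces $D=A_0(X)_{\dv}$, and therefore $A_0(X)_{\nd}=A_0(X)/D$ is finite of exponent dividing $2N=2|T(E_1\times E_2)_{\nd}|$.

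There is no genuine obstacle here: the corollary is a direct consequence of the machinery already assembled, and the only two ``imported'' inputs are \cite[Theorem 1.2]{Gazaki/Leal2022} (finiteness of $T(E_1\times E_2)_{\nd}$ under the stated reduction hypotheses — which hold on the nose here, so no extension is needed and the exponent bound stays sharp) and Colliot‑Th\'el\`ene's finiteness of the torsion in $\CH_0$ of a $p$‑adic surface \cite[Theorem 8.1]{CT93}, used via the injectivity of divisible groups to split off $D$. The factor $2$ in the exponent is exactly the $2$‑torsion quotient coming from the degree‑two cover $\pi'\colon A'\to X$ (\autoref{quotients}), and the factor $|T(E_1\times E_2)_{\nd}|$ is what is needed to push the non‑divisible part of $T(E_1\times E_2)$ into its maximal divisible subgroup before applying $\pi_\star$.
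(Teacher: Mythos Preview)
Your proof is correct and follows essentially the same route as the paper: the corollary is stated as an immediate specialization of the proof of \autoref{padic1}(i) with $A=E_1\times E_2$, $\phi=\mathrm{id}$, and no preliminary base change needed, yielding the bound $2N$ in place of $2N\deg(\phi)$; you have reproduced exactly that argument, including the use of \cite[Theorem 1.2]{Gazaki/Leal2022} and \cite[Theorem 8.1]{CT93}. Note only that what the argument actually proves (as you correctly write) is that the exponent of $A_0(X)_{\nd}$ \emph{divides} $2|T(E_1\times E_2)_{\nd}|$; the phrase ``exponent divisible by'' in the stated corollary is a slip, as the proof of \autoref{explicit2} confirms.
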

\begin{defn}\label{geomKummer} Let $X$ be a $K3$ surface over a field $k$. We say that $X$ is \textit{geometrically split Kummer} if the base change $X_{\kk}$ to the algebraic closure is isomorphic to the Kummer surface $\Kum(A)$ associated to an abelian surface $A$ admitting an isogeny $\phi:E_1\times E_2\to A$ from a product of elliptic curves.  
\end{defn}
\begin{rem} We see that \autoref{padic1} gives very large collections of geometrically split Kummer $K3$ surfaces that satisfy Colliot-Th\'{e}l\`{e}ne's conjecture. Even in the non-split case we obtain some weaker evidence. 
For any such $K3$ surface $X$ over $k$, the base change $X_{\kk}$ has very large Picard rank. It will be interesting to find evidence for \autoref{localconj2} for $K3$ surfaces with small Picard rank. 
\end{rem}

\vspace{1pt}
\subsection{Diagonal Quartic Surfaces}\label{quarticsection} In this section we obtain a result similar to \autoref{explicit1} for a special class of geometrically split Kummer $K3$ surfaces, the diagonal quartics. 
 A recent reference for the facts mentioned below is given in the Appendix of \cite{ISZ2011} written by Sir Peter  Swinnerton-Dyer. 

Let $k$ be a field of characteristic different from $2$. A diagonal quartic surface over $k$ is a smooth hypersurface $D$ in $\mathbb{P}_k^3$ given by the equation 
\begin{equation}\label{quartic}
x_0^4+a_1x_1^2+a_2x_2^4+a_3x_3^4=0, 
\end{equation} where $a_1,a_2,a_3\in k^\times$. A famous special case is the \textit{Fermat quartic} surface $Y$,   
\begin{equation}
x_0^4+x_1^4+x_2^4+x_3^4=0.
\end{equation}
Consider the finite extension $K=k(\mu_8)$. Mizukami in his thesis (cf.~\cite{Mizukami78}) constructed an isomorphism
$f:Y_K\xrightarrow{\simeq} \Kum(A_K)$ defined over $K$, where $A$ is the abelian surface obtained as follows. Let $C$ be the elliptic curve  which is a smooth projective model of the affine curve with equation $y^2=(x^2-1)(x^2-\frac{1}{2})$. The base point of $C$ is the point at infinity at which $y/x^2=1$. Let $\tau: C\to C$ be the fixed point free involution on $C$ given by $\tau(x,y)=(-x,-y)$. The abelian surface $A$ is the quotient of $C\times C$ modulo the simultaneous action of $\tau$ on both components. 

We see that there is a degree $2$ isogeny $C\times C\to A$, making the Fermat quartic geometrically split Kummer. 
The more general diagonal quartic $D$ given by \eqref{quartic} becomes isomorphic to $Y$ over the extension $K(\sqrt{a_1},\sqrt{a_2},\sqrt{a_3})$. We conclude that in the most general case there is a finite extension $L/k$ of degree $2^r$ for some $r\leq 5$ such that $D_L\simeq \Kum(A_L)$. 
We obtain the following explicit result. 
\begin{cor}\label{explicit2} Let $p$ be a prime such that $p\equiv 1\mod 4$ and $p$ is coprime to the conductor $\mathfrak{n}$ of $C$. Let $k$ be a finite extension of $\Q_p$ and $D$ be the diagonal quartic surface \eqref{quartic} over $k$. Then $A_0(D)_{\nd}$ is a finite group of exponent divisible by $2^{r+1}|T(C_L\times C_L)_{\nd}|$.  
\end{cor}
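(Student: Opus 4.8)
The plan is to deduce the Corollary from \autoref{padic1}(i) --- or rather from the explicit exponent bound extracted in its proof --- applied over the finite extension $L/k$ furnished by Mizukami's construction, and then to descend from $L$ back to $k$. Recall from Subsection~\ref{quarticsection} (and \cite{Mizukami78}) that there is an extension $L/k$ with $[L:k]=2^r$, $r\le 5$, an isomorphism $D_L\xrightarrow{\simeq}\Kum(A_L)$ over $L$, and a degree $2$ isogeny $C\times C\to A$; in fact, since $A=(C\times C)/\langle (P,P)\rangle$ for a (rational) $2$-torsion point $P$ of $C$, the automorphism $(a,b)\mapsto(a,b-a)$ of $C\times C$ identifies $A$ with the \emph{product} $(C/\langle P\rangle)\times C$ of elliptic curves, each of which is $2$-isogenous to $C$ and hence has complex multiplication by an order in $\Q(i)$. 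The one genuinely new point to check is the reduction type: since $C$ has CM by $\Q(i)$ (cf.~the Appendix of \cite{ISZ2011}), and $p\equiv 1\bmod 4$ forces $p$ to split in $\Q(i)$ while $p\nmid\mathfrak n$ gives good reduction, the curve $C$ --- and every curve isogenous to it, in particular $C/\langle P\rangle$, together with their base changes to $L$ --- has good \emph{ordinary} reduction at $p$. Thus over $L$ we are exactly in the situation of \autoref{explicit1}: $D_L\simeq\Kum\bigl((C/\langle P\rangle)_L\times C_L\bigr)$ with both factors of good ordinary, a fortiori split semistable, reduction and neither supersingular.

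First I would invoke \autoref{explicit1} over $L$ to get that $A_0(D_L)_{\nd}$ is finite and annihilated by $2\,\bigl|T\bigl((C/\langle P\rangle)_L\times C_L\bigr)_{\nd}\bigr|$, the factor $2$ coming from the Kummer double cover $A_L'\to D_L$ of \autoref{quotients} (with the prime-to-$2$ part realized, via \autoref{coprimeto2}, as a quotient of the corresponding $T$-group). Then, to descend to $k$, I would use the finite flat structure map $f\colon D_L\to D$ together with the pullback $f^\star\colon\CH_0(D)\to\CH_0(D_L)$ and the pushforward $f_\star\colon\CH_0(D_L)\to\CH_0(D)$, which satisfy $f_\star f^\star=[L:k]=2^r$ on $\CH_0(D)$. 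Since these maps restrict to the Albanese kernels (here all of $A_0$, as $D$ and $D_L$ are $K3$ surfaces) and $f_\star$ sends the maximal divisible subgroup of $A_0(D_L)$ into that of $A_0(D)$, for $z\in A_0(D)$ the class $2\,\bigl|T\bigl((C/\langle P\rangle)_L\times C_L\bigr)_{\nd}\bigr|\cdot f^\star(z)$ is divisible in $A_0(D_L)$, whence $2^{r+1}\,\bigl|T\bigl((C/\langle P\rangle)_L\times C_L\bigr)_{\nd}\bigr|\cdot z$ is divisible in $A_0(D)$. Together with the fact that finiteness of $A_0(D)_{\nd}$ also descends along $L/k$ (this is \cite[Lemma 3.3]{Gaz22}, used already in the proof of \autoref{padic1}), this gives that $A_0(D)_{\nd}$ is finite and annihilated by $2^{r+1}\,\bigl|T\bigl((C/\langle P\rangle)_L\times C_L\bigr)_{\nd}\bigr|$.

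The remaining --- and main --- obstacle is to replace $\bigl|T\bigl((C/\langle P\rangle)_L\times C_L\bigr)_{\nd}\bigr|$ by $\bigl|T(C_L\times C_L)_{\nd}\bigr|$ in the bound, i.e.~to control how the $2$-isogeny $C\to C/\langle P\rangle$ affects the order of the non-divisible part of the Albanese kernel of the product. One must check that the discrepancy between the two $T$-groups is a power of $2$ absorbed by the factor $2^{r+1}$ --- for instance by comparing the two products through the degree-$2$ isogeny between them (\autoref{pullback1}), so that the quotient is $2$-torsion in each direction --- or, more delicately, reorganize the filtration argument directly so that the Kummer ``$2$'' and the isogeny degree are not counted independently. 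Alternatively one keeps the odd part separate, where \autoref{coprimeto2} and \cite[Theorem 1.2]{Gazaki/Leal2022} describe the relevant $T$-groups explicitly and isogeny-stably, and handles the $2$-primary part using the boundedness of $\CH_0$-torsion on $p$-adic surfaces (\cite[Theorem 8.1]{CT93}, invoked already in the proof of \autoref{padic1}). Verifying that none of these steps --- the isogeny, the Kummer contraction, or the pushforward $f_\star$ --- loses any divisibility is where the care is needed; the reduction-type analysis and the transfer/descent mechanisms are routine given the results already established above.
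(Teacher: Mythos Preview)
Your argument is essentially correct through the descent step, but you have taken an unnecessary detour that manufactures the very difficulty you then struggle to resolve in your final paragraph. The observation that $A\simeq (C/\langle P\rangle)\times C$ is clever, but the paper neither uses nor needs it: \autoref{padic1}(i) is stated for abelian surfaces merely \emph{isogenous} to a product, so one applies it directly over $L$ with $E_1=E_2=C_L$ and the given degree-$2$ isogeny $\phi\colon C_L\times C_L\to A_L$. The explicit exponent bound extracted in the proof of \autoref{padic1} then reads $2\deg(\phi)\,|T(C_L\times C_L)_{\nd}|$ for the exponent of $A_0(D_L)_{\nd}$, phrased from the outset in terms of $T(C_L\times C_L)$ rather than $T\bigl((C/\langle P\rangle)_L\times C_L\bigr)$. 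The descent to $k$ then proceeds exactly as you wrote, via $f_\star f^\star=[L:k]=2^r$, which is precisely the content of \cite[Lemma~3.3]{Gaz22} that the paper invokes.

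In other words, the ``main obstacle'' of your last paragraph---comparing $|T((C/\langle P\rangle)_L\times C_L)_{\nd}|$ with $|T(C_L\times C_L)_{\nd}|$ across a $2$-isogeny---is an artifact of having reached for \autoref{explicit1} (which requires $X=\Kum(E_1\times E_2)$ on the nose) instead of the more general \autoref{padic1}(i). Your reduction-type analysis (CM by $\Q(i)$, $p\equiv 1\bmod 4$ split, hence ordinary) and your descent mechanism both match the paper's; only the choice of which theorem to feed them into differs. Had you applied \autoref{padic1}(i) directly, the proof would be complete with no loose ends.
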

\begin{proof} We first claim that every prime $p$ satisfying the assumptions is a prime of good ordinary reduction for the elliptic curve $C$. To see this note that $C$ is isogenous to the elliptic curve $E:=C/\tau$ which has a short Weierstrass equation $y^2=x^3-4x$. 
Thus, $E$ has CM by $\Z[i]$, and hence it has good ordinary reduction at every prime $p$ that splits completely in $\Z[i]$ and is coprime to $\mathfrak{n}$; equivalently at every prime $p\equiv 1\mod 4$ and coprime to $\mathfrak{n}$. Since the isogeny $\phi: C\to E$ is of degree coprime to $p$, $C$ has also good ordinary reduction. 

We conclude that for the surface $D$ the assumptions of \autoref{padic1} (i) apply. 
The proof of this theorem gives a vanishing $(2|T(C_L\times C_L)_{\nd}|)A_0(D_L)_{\nd}=0$. 
Combining this with the proof of \cite[Lemma 3.3]{Gaz22} we obtain the desired vanishing \[2^{r+1}|T(C_L\times C_L)_{\nd}|A_0(D)_{\nd}=0.\]

\end{proof}

\vspace{2pt}
\section{Explicit local structure}\label{localresults} 
Throughout this section $k$ will be a finite extension of $\Q_p$ where $p$ is a prime; we will often assume that $p$ is odd. 
\black
Let $X$ be a $K3$ surface over $k$ satisfying the assumptions of \autoref{padic1} (i). 
In this section we obtain more concrete information on the non-divisible part $A_0(X)_{\nd}$ of the Albanese kernel, which in certain cases can be fully computed. 
\subsection{Unramified Divisibility} We start with a result over unramified extensions of $\Q_p$. 

%
\begin{cor}\label{unramified1} 
Let $p>2$ and  $k$ be a finite \textbf{unramified} extension of $\Q_p$. 
\begin{enumerate}
\item[(a)] Let $X=\Kum(A)$ be the Kummer surface associated to a product $A=E_1\times E_2$ of two elliptic curves over $k$.  Suppose that $A$ has good ordinary or almost ordinary reduction. Then the group $A_0(X)$ is divisible. 
\item[(b)] Let $D$ be a diagonal quartic surface \eqref{quartic} over $k$ and $p$ a prime satisfying the assumptions of \autoref{explicit2}. Then the finite group $A_0(D)_{\nd}$ is $2$-primary torsion. If additionally $D$ has good reduction, then $A_0(D)$ is divisible.
\end{enumerate}  
\end{cor}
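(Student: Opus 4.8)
\emph{Strategy and part (a).} The plan is to transport divisibility from the abelian surface to the Kummer surface through \autoref{coprimeto2}, handling the exceptional prime $2$ --- where that comparison is unavailable --- by the good reduction vanishing of Saito and Sato. For (a): since $p>2$, the $K3$ surface $X=\Kum(A)$ has good reduction over $\mathcal O_k$ (the blow-up construction of the Kummer surface is defined integrally as soon as $2$ is a unit), so by \cite[Corollary 0.10]{Saito/Sato2010} the group $A_0(X)=T(X)$ has no finite summand and is therefore divisible by every integer coprime to $p$; in particular it is $2$-divisible, and it remains only to prove $p$-divisibility. Here I invoke \cite[Theorem 1.4]{Gazaki/Hiranouchi2021}: as $k/\Q_p$ is unramified, $p$ is odd, and $E_1\times E_2$ has good ordinary or almost ordinary reduction, $T(E_1\times E_2)$ is divisible. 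Since $p$ is odd, \autoref{coprimeto2} applies with $n=p^m$ and gives a surjection $T(E_1\times E_2)/p^m\twoheadrightarrow A_0(X)/p^m$; the left-hand group vanishes, hence so does the right one, and $A_0(X)$ is $p$-divisible. Combining, $A_0(X)/n=0$ for all $n$, i.e.\ $A_0(X)$ is divisible.

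\emph{Part (b).} By \autoref{explicit2} there is a finite extension $L/k$ with $[L:k]=2^r$, $r\le 5$, such that $D_L\simeq\Kum(A_L)$ for an abelian surface $A_L$ carrying a degree-$2$ isogeny $C_L\times C_L\to A_L$; that proof also records that $C$ has good ordinary reduction at $p$ (it is isogenous to the CM curve $y^2=x^3-4x$) and establishes $2^{r+1}|T(C_L\times C_L)_{\nd}|\,A_0(D)_{\nd}=0$. Hence it suffices to show that $T(C_L\times C_L)$ is divisible. Over $L$ the abelian surface $C_L\times C_L$ has good ordinary reduction, so \cite[Corollary 0.10]{Saito/Sato2010} gives divisibility by every integer coprime to $p$; and because $k/\Q_p$ is unramified while $L=k(\mu_8,\sqrt{a_1},\sqrt{a_2},\sqrt{a_3})$ is a multiquadratic extension of the unramified field $k(\mu_8)$, one has $e(L/\Q_p)\le 2$. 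Since $p\ge 5$ this is strictly below the critical ramification $p-1$ at which the local Albanese kernel of a good ordinary abelian surface can acquire non-divisible $p$-torsion (compare the degree-$(p-1)$ extensions in \autoref{Br0}), so the filtration computations underlying \cite[Theorem 1.4]{Gazaki/Hiranouchi2021} (and \cite[Theorem 1.2]{Gazaki/Leal2022}) still force $T(C_L\times C_L)$ to be $p$-divisible. Thus $T(C_L\times C_L)$ is divisible, $A_0(D)_{\nd}$ is killed by a power of $2$, and the finite group $A_0(D)_{\nd}$ is $2$-primary. If moreover $D$ has good reduction, then the $a_i$ may be taken to be units, $L/k$ is unramified, and the argument of (a) applies with $D$ in place of $X$: $A_0(D)$ is $2$-divisible by \cite[Corollary 0.10]{Saito/Sato2010}, and it is $n$-divisible for every odd $n$ since, via the degree-$2^r$ cover $D_L\to D$ together with \autoref{coprimeto2} over $L$, $A_0(D)/n$ is a quotient of $T(C_L\times C_L)/n=0$. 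Hence $A_0(D)$ is divisible.

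\emph{Main obstacle.} The comparison in \autoref{coprimeto2} only controls $A_0(X)$ after inverting $2$, so the prime $2$ has to be handled separately; this is precisely why the good reduction input \cite[Corollary 0.10]{Saito/Sato2010} enters the divisibility statements, and why the weaker ``$2$-primary'' conclusion is all that survives in (b) when $D$ is permitted to have bad reduction. The one genuinely delicate verification, needed for that unconditional part of (b), is that Mizukami's trivializing extension is tame enough --- $e(L/\Q_p)\le 2<p-1$ --- so that we remain outside the range (ramification index $\ge p-1$, as in \autoref{Br0}) where non-divisible $p$-torsion really does arise.
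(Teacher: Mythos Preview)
Your argument for part (a) is essentially the paper's: divisibility away from $p$ via good reduction of $X$ and \cite[Corollary 0.10]{Saito/Sato2010}, and $p$-divisibility pushed down from $T(E_1\times E_2)$ using \cite[Theorem 1.4]{Gazaki/Hiranouchi2021} together with \autoref{coprimeto2}. The paper routes the latter through \autoref{explicit1} and cites \cite[Lemma 4.2]{Matsumoto2015} (and also \cite[Proposition 3.4]{Ieronymou2021}) rather than asserting the Kummer model spreads out, but the substance is identical.

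For part (b) you go further than the paper in one respect and introduce an unnecessary step in another. The paper simply invokes \autoref{explicit2} together with \cite[Theorem 1.4]{Gazaki/Hiranouchi2021} to conclude $A_0(D)_{\nd}$ is $2$-primary, without commenting on the fact that the auxiliary field $L$ need not be unramified over $\Q_p$. You correctly flag this and observe that $e(L/\Q_p)\le 2$ (true: $k(\mu_8)/\Q_p$ is unramified for odd $p$, and a multiquadratic extension of a $p$-adic field with $p$ odd has $e\le 2$). However, your claim that ``the filtration computations underlying \cite[Theorem 1.4]{Gazaki/Hiranouchi2021} \ldots\ still force $T(C_L\times C_L)$ to be $p$-divisible'' because $e<p-1$ is a heuristic, not a proof or a citation to a stated result; the comparison to \autoref{Br0} shows where non-divisible torsion \emph{can} appear, not that it \emph{cannot} appear earlier. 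So at this one spot both your write-up and the paper's are terse; you have simply made the assumption explicit.

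In the good-reduction clause of (b) you add the claim that the $a_i$ may be taken to be units so that $L/k$ is unramified. This is not justified (good reduction of the $K3$ surface $D$ does not a priori say the given diagonal model has unit coefficients), and it is not needed: once you know $A_0(D)_{\nd}$ is a finite $2$-group, good reduction of $D$ and \cite[Corollary 0.10]{Saito/Sato2010} give $2$-divisibility (since $p$ is odd), forcing $A_0(D)_{\nd}=0$. That is exactly how the paper concludes.
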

\begin{proof} 
It follows by \cite[Theorem 1.4]{Gazaki/Hiranouchi2021} that if $E_1, E_2$ are elliptic curves with good reduction over an unramified  extension of $\Q_p$  for odd $p$, and  at least one has good ordinary reduction, then the group $T(E_1\times E_2)_{\nd}$ vanishes. Combining this with Corollaries \eqref{explicit1}, \eqref{explicit2} we obtain that the finite groups $T(X)_{\nd}$ and $T(D)_{\nd}$ are $2$-primary torsion. 

Next, it follows by \cite[Lemma 4.2]{Matsumoto2015} that if the abelian surface $A$ has good reduction, then so does its associated Kummer surface $X$. It then follows by \cite[Proposition 3.4]{Ieronymou2021}, \cite[Corollary 0.10]{Saito/Sato2010} that the group $A_0(X)$ is $l$-divisible for every prime $l\neq p$. Since we assumed that $p$ is odd, it follows that $A_0(X)$ is divisible. Lastly, the same will be true for the diagonal quartic $D$ if we assume that $D$ has good reduction.

\end{proof} 

\begin{rem}\label{ordreduction} If $A$ is an abelian surface over a $p$-adic field with good ordinary reduction it follows by \cite[Remark 6.5]{LazdaSkorobogatov} that the associated Kummer surface $X=\Kum(A)$ has also ordinary reduction in the sense of Bloch and Kato (\cite{BlochKato86}). 
\end{rem}

\begin{rem}\label{pdivisible} \autoref{unramified1} can be extended to more general geometrically split Kummer $K3$ surfaces $X$ over an unramified extension $k$ of $\Q_p$ for $p>2$. Namely, suppose there is a finite unramified extension $K/k$ over which there is an isomorphism $X_K\simeq\Kum(A)$ for some abelian surface $A$ over $K$. Suppose $E_1\times E_2\xrightarrow{\phi}A$ is an isogeny and that the residue characteristic $p$ is coprime to $2[K:k]\deg(\phi)$. Since $p\nmid 2\deg(\phi)$, we obtain a surjection 
\[\pi_\star\phi_\star:T(E_1\times E_2)/p\twoheadrightarrow A_0(X_K)/p.\]
Let $N_{K/k}: \CH_0(X_K)\to \CH_0(X),$ $ \res_{K/k}:\CH_0(X)\to\CH_0(X_K)$ be respectively the pushforward and pullback maps induced by the projection $\pi_{K/k}: X_K\to X$. Since the composition $N_{K/k}\res_{K/k}=[K:k]$, which is coprime to $p$, it follows that the induced homomorphism $A_0(X_K)/p\xrightarrow{N_{K/k}}A_0(X)/p$ of $\F_p$-vector spaces is surjective. 
 If we additionally assume that the abelian surface $E_1\times E_2$ has good ordinary or almost ordinary reduction, then once again we can conclude that the group $A_0(X)$ is $p$-divisible by composing the two surjections. 
\end{rem}

As we shall see in \autoref{weaksection}, the main significance of \autoref{unramified1} and \autoref{pdivisible} is for $K3$ surfaces defined over number fields. In particular, this is a key component to obtain results relevant to Weak Approximation for zero-cycles. 



\vspace{2pt}
\subsection{Computing the non-divisible subgroup and relation to the Brauer group}\label{computations1}
 For the remainder of this section we consider the following set-up.
\begin{ass}\label{setup}  Let $k$ be a finite extension of $\Q_p$. Let $E_1, E_2$ be elliptic curves over $k$ with at most one having potentially good supersingular reduction. Set $A=E_1\times E_2$ and $X=\Kum(A)$.  
\end{ass}


\begin{notn} For a finite abelian group $B$ we will denote by $B\{2'\}$ the subgroup of $B$ consisting of all elements of order coprime to $2$. 
\end{notn} 
\autoref{padic1} gives that the group $A_0(X)_{\nd}$ is finite of order divisible by $N:=|T(A)_{\nd}\{2'\}|$. Since $N$ is coprime to $2$, \autoref{coprimeto2} gives a surjection 
\begin{equation}\label{mainsurjection}
\pi_\star: T(A)_{\nd}\{2'\}\twoheadrightarrow A_0(X)_{\nd}\{2'\}. 
\end{equation}

%


 The purpose of this subsection is to describe cases where the group $A_0(X)_{\nd}\{2'\}$ is nontrivial and the above surjection is an isomorphism.


 For a smooth projective variety $Y$ of dimension $d$ over a $p$-adic field $k$ the non-divisible subgroup of the Chow group $\CH_0(Y)$ of zero-cycles can often be understood by relating it to certain \'{e}tale cohomology groups. More precisely, for an integer $n\geq 1$ we have:
 \begin{enumerate}
 \item[(a)] The cycle class map to \'{e}tale cohomology 
\[c_n^Y:\CH_0(Y)/n\to H^{2d}_{\text{\'{e}t}}(Y,\mu_{n}^{\otimes d}).\]
\item[(b)] The Brauer-Manin pairing 
\[\langle,\rangle_Y:\CH_0(Y)\times\Br(Y)\to\Q/\Z,\] where $\Br(Y)=H^2_{\text{\'{e}t}}(Y,\G_m)$ is the cohomological Brauer group of $Y$. This pairing induces a homomorphism $\varepsilon_n^Y:\CH_0(Y)/n\to\Hom(\Br(Y)_n,\Q/\Z)$. 
 \end{enumerate} 
In what follows we won't need the precise definition of the cycle class map, but we will make use of the Brauer-Manin pairing, whose definition we briefly recall here. For a closed point $y\in Y$ we get a pullback homomorphism $\Br(Y)\xrightarrow{\iota_{y}^\star}\Br(k(y))$ induced by the closed embedding $\iota_y:\Spec(k(y))\hookrightarrow Y$. Then for $\alpha\in\Br(Y)$ we define \[\langle [y],\alpha\rangle_Y=\Cor_{k(y)/k}(\iota_y^\star(\alpha))\in\Br(k),\] where $\Cor_{k(y)/k}:\Br(k(y))\to\Br(k)$ is the Corestriction map of Galois cohomology. Postcomposing with the invariant map of local Class Field Theory, $\inv:\Br(k)\xrightarrow{\simeq}\Q/\Z$, gives the required pairing. For a proof of the fact that the pairing factors through rational equivalence we refer to \cite[p.~53]{Colliot-Thelene1995}. 

It is well-known that the maps $c_n^Y$ and $\varepsilon_n^Y$ are related and that $c_n^Y$ is injective if and only if $\varepsilon_n^Y$ is. We briefly review the argument here. For more details we refer to \cite[p.~41]{CT93} (see also \cite[Remark 3.2]{Esnault/Wittenberg}).
 Using local Class Field Theory and local Tate duality S. Saito (\cite{Saito89})  gave  
 a non-degenerate pairing \[H^{2d}_{\text{\'{e}t}}(Y,\mu_{n}^{\otimes d})\times H^2_{\text{\'{e}t}}(Y,\mu_n)\to\Z/n\Z,\] inducing an injection  $H^{2d}_{\text{\'{e}t}}(Y,\mu_{n}^{\otimes d})\hookrightarrow\Hom(H^2_{\text{\'{e}t}}(Y,\mu_n),\Z/n)=\Hom(H^2_{\text{\'{e}t}}(Y,\mu_n),\Q/\Z)$. Precomposing with the cycle class map we obtain a homomorphism 
\[\CH_0(Y)/n\to \Hom(H^2_{\text{\'{e}t}}(Y,\mu_n),\Q/\Z).\]
The Kummer sequence $1\to\mu_n\to\G_m\xrightarrow{n}\G_m\to 1$ on $Y_{\text{\'{e}t}}$ induces a short exact sequence 
\[0\to\Pic(Y)/n\to H^2(Y,\mu_n)\to\Br(Y)_n\to 0.\] Applying the exact functor $\Hom(-,\Q/\Z)$ gives an exact sequence 
\[0\to\Hom(\Br(Y)_n,\Q/Z)\to\Hom(H^2_{\text{\'{e}t}}(Y,\mu_n),\Q/Z)\to\Hom(\Pic(Y)/n,\Q/Z)\to 0.\]
The pairing $\CH_0(Y)/n\times\Pic(Y)/n\to\Q/\Z$ induced by the cycle class map and the Kummer sequence is zero, which implies that the homomorphism $\CH_0(Y)/n\to \Hom(H^2_{\text{\'{e}t}}(Y,\mu_n),\Q/\Z)$ factors through $\Hom(\Br(Y)_n,\Q/Z)$. The induced homomorphism \[\CH_0(Y)/n\to\Hom(\Br(Y)_n,\Q/Z)\] coincides with the map $\varepsilon_n^Y$ induced by the Brauer-Manin pairing. It then follows by the non-degeneracy of the Saito pairing that $c_n^Y$ is injective if and only if $\varepsilon_n^Y$ is. 

We note that the Brauer group of a variety is contravariant  
functorial. For a morphism $f:Y\to Z$ of smooth projective varieties we will denote by $f^\star:\Br(Z)\to\Br(Y)$ the induced pullback map and by $\widehat{f^\star}$ the dual homomorphism, $\Hom(\Br(Y),\Q/\Z)\xrightarrow{\widehat{f^\star}}\Hom(\Br(Z),\Q/\Z)$. The following lemma about functoriality of the Brauer-Manin pairing is standard. We include a proof for completeness.  

\begin{lem}\label{functoriality} Let $f:Y\to Z$ be a proper morphism between smooth projective varieties over $k$. For every $n\geq 1$ there is a commutative diagram 
\begin{equation}\label{diag1}
\begin{tikzcd}
\CH_0(Y)/n\ar{r}{\varepsilon_n^Y}\ar{d}{f_\star} & \Hom(\Br(Y)_n,\Q/\Z)\ar{d}{\widehat{f^\star}}\\
 \CH_0(Z)/n\ar{r}{\varepsilon_n^Z} & \Hom(\Br(Z)_n,\Q/\Z),
\end{tikzcd}
\end{equation} where the maps $\varepsilon_n^Y, \varepsilon_n^Z$ are induced by the Brauer-Manin pairing. 
\end{lem}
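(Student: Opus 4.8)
The plan is to reduce to the case of the class $[y]$ of a single closed point $y\in Y$, since $\CH_0(Y)/n$ is generated by such classes and all four maps in the square are additive, and then to evaluate the two composites against an arbitrary $\alpha\in\Br(Z)_n$. Because $f$ is proper (hence closed), $z:=f(y)$ is a closed point of $Z$, the residue field extension $k(z)\hookrightarrow k(y)$ is finite of some degree $d:=[k(y):k(z)]$, and by definition of proper pushforward of zero-cycles $f_\star[y]=d\,[z]$ in $\CH_0(Z)$. Tracing $[y]$ down-then-right yields the functional $\alpha\mapsto\langle f_\star[y],\alpha\rangle_Z=d\,\inv\!\big(\Cor_{k(z)/k}(\iota_z^\star\alpha)\big)$, while tracing right-then-down yields $\alpha\mapsto\widehat{f^\star}(\varepsilon_n^Y[y])(\alpha)=\langle [y],f^\star\alpha\rangle_Y=\inv\!\big(\Cor_{k(y)/k}(\iota_y^\star f^\star\alpha)\big)$. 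So commutativity of the square is equivalent to the identity $\Cor_{k(y)/k}(\iota_y^\star f^\star\alpha)=d\,\Cor_{k(z)/k}(\iota_z^\star\alpha)$ in $\Br(k)$.

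To establish this identity I would note first that the composite $\Spec k(y)\xrightarrow{\iota_y}Y\xrightarrow{f}Z$ factors through $\iota_z\colon\Spec k(z)\hookrightarrow Z$ via the morphism $g\colon\Spec k(y)\to\Spec k(z)$ induced by $k(z)\hookrightarrow k(y)$; contravariance of the Brauer group then gives $\iota_y^\star f^\star\alpha=g^\star\iota_z^\star\alpha=\res_{k(y)/k(z)}(\iota_z^\star\alpha)$. Applying $\Cor_{k(y)/k}$, transitivity of corestriction gives $\Cor_{k(y)/k}=\Cor_{k(z)/k}\circ\Cor_{k(y)/k(z)}$, and the standard relation $\Cor_{k(y)/k(z)}\circ\res_{k(y)/k(z)}=[k(y):k(z)]=d$ then yields $\Cor_{k(y)/k}(\iota_y^\star f^\star\alpha)=\Cor_{k(z)/k}(d\cdot\iota_z^\star\alpha)=d\,\Cor_{k(z)/k}(\iota_z^\star\alpha)$. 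Postcomposing with the invariant isomorphism $\inv$, which is a group homomorphism, completes the verification for the generator $[y]$, and additivity extends it to all of $\CH_0(Y)/n$.

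One should record at the outset the routine bookkeeping that makes the statement meaningful: the Brauer--Manin pairing kills $n\,\CH_0(-)$ when paired against $\Br(-)_n$ (since $\langle nz,\alpha\rangle=\langle z,n\alpha\rangle=0$), so $\varepsilon_n^Y$ and $\varepsilon_n^Z$ descend to the quotients $\CH_0(-)/n$, and $\widehat{f^\star}$ restricts to a map $\Hom(\Br(Y)_n,\Q/\Z)\to\Hom(\Br(Z)_n,\Q/\Z)$ because $f^\star$ carries $\Br(Z)_n$ into $\Br(Y)_n$. I do not expect any genuine obstacle here; the only point demanding care is the compatibility of the pullbacks along closed points with corestriction, i.e.\ the residue-field projection formula $\Cor_{k(y)/k}\circ\iota_y^\star\circ f^\star=d\cdot(\Cor_{k(z)/k}\circ\iota_z^\star)$, which is precisely the computation above.
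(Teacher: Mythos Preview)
Your proof is correct and follows essentially the same route as the paper: reduce to a single closed point, factor $f\circ\iota_y$ through $\iota_{f(y)}$, and use transitivity of corestriction together with $\Cor\circ\res=[k(y):k(f(y))]$. Your write-up is in fact slightly more careful than the paper's in recording the bookkeeping that $\varepsilon_n^Y,\varepsilon_n^Z$ descend to the quotients by $n$ and that $\widehat{f^\star}$ restricts to the $n$-torsion.
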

\begin{proof}
The lemma will follow if we show 
\[\langle f_\star([y]),\alpha\rangle_Z=\langle [y],f^\star(\alpha)\rangle_Y,\]
for every closed point $y\in Y$ and all $\alpha\in\Br(Z)$. The right hand side equals 
\[\langle [y],f^\star(\alpha)\rangle_Y=\Cor_{k(y)/k}(\iota_y^\star(f^\star(\alpha))=\Cor_{k(f(y))/k}(\Cor_{k(y)/k(f(y))}((f\circ\iota_y)^\star(\alpha))).\] The left hand side equals 
\[\langle f_\star([y]),\alpha\rangle_Z=\Cor_{k(f(y))/k}([k(y):k(f(y))]\iota^\star_{f(y)}(\alpha)).\] Thus, it suffices to show $\Cor_{k(y)/k(f(y))}((f\circ\iota_y)^\star(\alpha))=[k(y):k(f(y))]\iota^\star_{f(y)}(\alpha)$. This follows immediately by the factoring $f\circ\iota_y=\pi_{k(y)/k(f(y))}\circ\iota_{f(y)}$, where $\pi_{k(y)/k(f(y))}:\Spec(k(y))\to\Spec(k(f(y))$ is the projection. Namely, the pullback $\pi_{k(y)/k(f(y))}^\star:\Br(k(f(y)))\to\Br(k(y))$ coincides with the restriction map $\res_{k(y)/k(f(y))}$ of Galois cohomology and we have an equality $\Cor_{k(y)/k(f(y))}\circ\res_{k(y)/k(f(y))}=\cdot [k(y):k(f(y))]$.

\end{proof}
For a smooth projective variety $Y$ over $k$ the Hochshild-Serre spectral sequence gives a filtration of the Brauer group \[\Br(Y)\supseteq\Br_1(Y)\supseteq\Br_0(Y)\supset 0,\] where $\Br_0(Y):=\img(\Br(k)\to\Br(Y))$ are the constants and $\Br_1(Y)=\ker(\Br(Y)\to\Br(Y_{\kk}))$ is the \textit{algebraic Brauer group}. The quotient $\Br(Y)/\Br_1(Y)$ is known as the \textit{transcendental Brauer group} of $Y$. It follows easily that the Brauer-Manin pairing induces a homomorphism 
\[A_0(Y)/n\xrightarrow{\varepsilon_n^Y}\Hom(\Br(Y)_n/\Br_0(Y)_n,\Q/\Z).\]
 
\begin{prop}\label{mainlocal} Consider the set-up \eqref{setup}.  Suppose that the N\'{e}ron-Severi group $\NS(A_{\kk})$ has trivial $\Gal(\kk/k)$-action. Then for every integer $n\geq 1$ there is a commutative diagram induced by the Brauer-Manin pairing
\begin{equation}\label{diag2}
\begin{tikzcd}
T(A)/n\ar{r}
\ar{d}{\pi_\star} & \Hom(\Br(A)_n/\Br_1(A)_n,\Q/\Z)\ar{d}{\widehat{\pi^\star}} & \\
 A_0(X)/n\ar{r}{\varepsilon_n^X} & \Hom(\Br(X)_n/\Br_0(X)_n,\Q/\Z),
\end{tikzcd}\end{equation}  
 where the top map is obtained by factoring $\varepsilon_n^A$ through $\Hom(\Br(A)_n/\Br_1(A)_n,\Q/\Z)$. 
Moreover, if $n$ is odd and the map $\varepsilon_n^A$ is injective, then the map $\pi_\star$ is an isomorphism, and the maps $\widehat{\pi^\star}$ and $\varepsilon_n^X$ are injective. 
\end{prop}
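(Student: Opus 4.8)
The plan is to derive the commutative square \eqref{diag2} from the functoriality of the Brauer--Manin pairing (\autoref{functoriality}), and then to deduce the three injectivity assertions from one substantial input, the comparison of transcendental Brauer groups in \cite[Theorem 2.4]{SZ2012}; everything else will be formal manipulation with \autoref{functoriality}, \autoref{coprimeto2} and Pontryagin duality.

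\emph{The diagram.} First move from $A'$ back to $A$: blowing up the $16$ two-torsion points is a birational morphism of smooth projective surfaces, so $\rho_\star\colon A_0(A')\xrightarrow{\ \simeq\ }A_0(A)$ by \cite[Proposition 6.3]{CT79} and $\rho^\star\colon\Br(A)\xrightarrow{\ \simeq\ }\Br(A')$ by birational invariance of the Brauer group of a smooth proper variety; the latter is compatible with base change to $\kk$, hence identifies $\Br_0$ and $\Br_1$ on the two sides. Setting $\pi_\star:=\pi'_\star\circ\rho_\star^{-1}$ and $\pi^\star:=(\rho^\star)^{-1}\circ\pi'^\star$, the two instances of \autoref{functoriality} for $\pi'$ and for $\rho$ combine (invert the one for $\rho$) into a commutative square relating $\varepsilon_n^A$, $\varepsilon_n^X$, $\pi_\star$, $\widehat{\pi^\star}$ with target $\Hom(\Br(-)_n,\Q/\Z)$. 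It remains to pass to the quotients in \eqref{diag2}. On the $X$ side this is automatic: constant classes pair trivially with zero-cycles of degree $0$, so $\varepsilon_n^X$ restricted to $A_0(X)/n$ lands in $\Hom(\Br(X)_n/\Br_0(X)_n,\Q/\Z)$, and $\pi^\star$ carries $\Br_0(X)$ into $\Br_0(A)\subseteq\Br_1(A)$, so $\widehat{\pi^\star}$ descends as needed. On the $A$ side I must show $\varepsilon_n^A$ kills $\Br_1(A)_n$ on $T(A)/n$, i.e.\ $\langle z,\alpha\rangle_A=0$ for $z\in T(A)$ and $\alpha\in\Br_1(A)$. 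Here the hypothesis on $\NS(A_{\kk})$ enters: since $\NS(A_{\kk})$ is finitely generated, free and has trivial Galois action, $H^1(k,\NS(A_{\kk}))=0$, so from $0\to\Pic^0(A_{\kk})\to\Pic(A_{\kk})\to\NS(A_{\kk})\to0$ every class in $\Br_1(A)/\Br_0(A)\hookrightarrow H^1(k,\Pic(A_{\kk}))$ is the image of some $\xi\in H^1(k,\Pic^0(A_{\kk}))$, and the Brauer class attached to such a $\xi$ evaluates on a rational point $P$ as the Tate local-duality pairing $\langle P,\xi\rangle$ (Poincar\'e biextension); hence $\langle z,\alpha\rangle_A$ factors through $\alb_A(z)$ and vanishes on $T(A)=\ker\alb_A$. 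This produces \eqref{diag2}, with top arrow the asserted factorization of $\varepsilon_n^A$.

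\emph{The injectivity statements.} Fix an odd $n$ with $\varepsilon_n^A$ injective. The crux is the surjection $\pi^\star\colon\Br(X)_n\twoheadrightarrow\Br(A)_n/\Br_1(A)_n$: injectivity of $\pi^\star$ on odd torsion is elementary ($\pi'_\star\circ\pi'^\star=2$ on $\Br(X)$), and the surjectivity onto the transcendental quotient is the content of \cite[Theorem 2.4]{SZ2012}, which identifies the odd-primary transcendental Brauer groups of $X$ and $A$. Granting this: (i) $\widehat{\pi^\star}$ is injective, being the Pontryagin dual of that surjection (the groups $\Br(X)_n,\Br(A)_n,\Br_1(A)_n$ are finite over a $p$-adic field); (ii) $\pi_\star\colon T(A)/n\to A_0(X)/n$ is surjective by \autoref{coprimeto2}, and it is injective — here one uses that $T(A)/n\hookrightarrow A_0(A)/n$, which holds because $n$ is odd and the Albanese filtration of the abelian \emph{surface} $A$ splits after inverting $2$ via the $[-1]^\star$-eigenspace decomposition of $A_0(A)$: indeed if $z\in T(A)/n$ has $\pi_\star z=0$, then chasing \eqref{diag2} gives $\widehat{\pi^\star}(\varepsilon_n^A(z))=\varepsilon_n^X(\pi_\star z)=0$, so $\varepsilon_n^A(z)$ vanishes on $\pi^\star(\Br(X)_n)$, and combining with its vanishing on $\Br_1(A)_n$ and the surjectivity $\pi^\star(\Br(X)_n)+\Br_1(A)_n=\Br(A)_n$ it vanishes on all of $\Br(A)_n$, whence $z=0$ by injectivity of $\varepsilon_n^A$; thus $\pi_\star$ is an isomorphism; (iii) $\varepsilon_n^X$ is injective on $A_0(X)/n$, since by \eqref{diag2} $\varepsilon_n^X\circ\pi_\star=\widehat{\pi^\star}\circ(\text{top arrow})$ is a composite of injections and $\pi_\star$ is onto.

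\emph{Main obstacle.} The one genuinely non-formal step is the crux, the surjection $\Br(X)_n\twoheadrightarrow\Br(A)_n/\Br_1(A)_n$, which rests on the geometric analysis of \cite{SZ2012}; some care is needed to extract the statement at the precise torsion level from \cite[Theorem 2.4]{SZ2012} and, if that result is only available over $\kk$, to descend it, which over a $p$-adic field is harmless because $H^3(k,\kk^\times)=0$. The remaining ingredients (functoriality of the pairing, the structure of the algebraic Brauer group of an abelian variety, finiteness of $\Br(-)_n$, Pontryagin duality, and \autoref{coprimeto2}) are routine.
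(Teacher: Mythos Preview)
Your proof is correct and follows essentially the same route as the paper: the paper cites \cite[Prop.~2.16]{Gazaki2022weak} for the factorization of $\varepsilon_n^A|_{T(A)}$ through the transcendental quotient (where you supply the argument directly via $H^1(k,\NS(A_{\kk}))=0$ and the Poincar\'e biextension), quotes \cite[Theorem~2.4]{SZ2012} for the odd-$n$ isomorphism $\Br(X)_n/\Br_1(X)_n\simeq\Br(A)_n/\Br_1(A)_n$ to obtain surjectivity of $\pi^\star$, and then reads off the three injectivity statements from the diagram exactly as you do. One minor difference: for the implicit step $T(A)/n\hookrightarrow\CH_0(A)/n$ the paper (in the paragraph just after the proof) uses that $T(E_1\times E_2)$ is a direct summand of $\CH_0(E_1\times E_2)$ via the K\"unneth-type projectors, rather than your $[-1]^\star$-eigenspace splitting.
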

%
\begin{proof}
It follows by \cite[Prop. 2.16]{Gazaki2022weak}  that when the N\'{e}ron-Severi group $\NS(A_{\kk})$ has trivial $\Gal(\kk/k)$-action, the homomorphism $A_0(A)/n\xrightarrow{\varepsilon_n^A}\Hom(\Br(A)_n/\Br_0(A)_n,\Q/\Z)$ when restricted to $T(A)/n$ factors through the transcendental piece $\Hom(\Br(A)_n/\Br_1(A)_n,\Q/\Z)$. Moreover, Skorobogatov and Zharin (\cite[Theorem 2.4]{SZ2012}) showed that for every integer $n\geq 1$ the pullback $\pi^\star:\Br(X)\to\Br(A)$ induces an injection 
\[\pi^\star:\Br(X)_n/{\Br_1(X)_n}\hookrightarrow \Br(A)_n/\Br_1(A)_n,\] 
which is an isomorphism when $n$ is odd. By composing with the natural quotient map, we obtain a map
\[\pi^\star:\Br(X)_n/\Br_0(X)_n\to \Br(A)_n/\Br_1(A)_n\] 
which is a surjection when $n$ is odd. 
This shows that the diagram \eqref{diag2} is well-defined and its commutativity follows from the one of \eqref{diag1}. 


From now on assume that $n$ is odd and $\varepsilon_n^A$ is injective. As mentioned above, the map $\pi^\star$ is  surjective, and hence its dual $\widehat{\pi^\star}$ is injective.  
Thus, the composition $\widehat{\pi^\star}\varepsilon_n^A$ is injective, forcing the pushforward $\pi_\star$ to be injective as well. Notice that we have equalities \[T(A)/n=T(A)_{\nd}\{2'\}/n,\;A_0(X)/n=A_0(X)_{\nd}\{2'\}/n,\] and hence by \autoref{quotients} the map $\pi_\star$ is also surjective.  
 It follows that  $\pi_\star$ is an isomorphism and $\varepsilon_n^X$ is injective.

\end{proof}

\begin{rem}
	Even if $\varepsilon_n^A$ is not injective, we can note the following consequence of the proof: if for some odd $n$ there exists $z\in T(A)$ and $\alpha\in \Br_n(A)$ with $\langle z,\alpha\rangle\neq 0$, then $\pi_\star(z)$ is not contained in $n\cdot A_0(X)$.
\end{rem}

As we saw in the beginning of this subsection, injectivity of the homomorphism $\varepsilon_n^A$ is equivalent to injectivity of the cycle class map \[\CH_0(A)/n\xrightarrow{c_n^A}H^4_{\text{\'{e}t}}(A,\mu_n^{\otimes 2}).\]
For a product $A$ of elliptic curves the Albanese kernel $T(A)$ is a direct summand of the Chow group $\CH_0(A)$, and hence if $c_n$ is injective, then so is its restriction to the Albanese kernel. It is easy to see that the converse is also true (see for example \cite[Proposition 2.4]{Yamazaki2005}). Using the relation between $T(A)$ and the Somekawa $K$-group $K(k;E_1,E_2)$ attached to $E_1,E_2$ obtained in \cite[Theorem 2.2, Corollary 2.4.1]{Raskind/Spiess2000}, studying the map $c_n^A|_{T(A)/n}$ has been reduced to the study of the generalized Galois symbol (cf. \cite[Proposition 1.5]{Somekawa1990}) \[K(k;E_1,E_2)/n\xrightarrow{s_n}H^2(k,E_1[n]\otimes E_2[n]).\]
With these identifications, there are many situations where the injectivity of the cycle class map $c_n$ has been verified. Using these results together with \autoref{mainlocal} we show below that for all possible reduction types included in  \eqref{setup} we can find large enough finite extensions $K/k$ over which the group $A_0(X_K)_{\nd}$ is nontrivial. 

\begin{cor}\label{nontrivial} Consider the set-up \eqref{setup}  with $p>2$. Let $n\geq 1$. There is a finite extension $K/k$ such that the cycle class map 
\[c_{p^n}^X:A_0(X_K)/p^n\rightarrow H^4(X_K,\mu_{p^n}^{\otimes 2})\] is injective, and the group $A_0(X_K)/p^n$ is isomorphic to 
 $\Z/p^n$ or $(\Z/p^n)^2$ depending on the reduction type of $E_1, E_2$. 
\end{cor}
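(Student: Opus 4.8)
The plan is to transport a computation of the Albanese kernel of $A=E_1\times E_2$ across the Kummer comparison of \autoref{mainlocal}. Since we only need \emph{some} finite extension, I would first enlarge $k$ to a finite extension $K$ over which all of the following hold simultaneously: the N\'eron--Severi group $\NS(A_{\kk})\cong\Z^2\oplus\Hom((E_1)_{\kk},(E_2)_{\kk})$ has trivial $\Gal(\kk/K)$-action (automatic over a finite extension, as this action factors through a finite quotient); each $E_i$ acquires split semistable reduction; $K$ contains $\Q_p(\mu_{p^n})$ and $E_i[p^n]\subseteq E_i(K)$; and $K$ is large enough for the injectivity and structure results below to apply. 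All of these conditions persist under further finite extension, so I may replace $k$ by $K$ in the notation, and I note that $X_K=\Kum(A_K)$, since the blow-up/quotient construction defining the Kummer surface commutes with base change in residue characteristic $\neq 2$.

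Next I would carry out the local computation for $A$. By the Raskind--Spiess identifications (\cite[Theorem 2.2, Corollary 2.4.1]{Raskind/Spiess2000}), $T(A)\cong K(k;E_1,E_2)$, and, via the discussion preceding \autoref{nontrivial}, injectivity of $c_{p^n}^A$ on $\CH_0(A)/p^n$ (equivalently of $\varepsilon_{p^n}^A$, and equivalently of $c_{p^n}^A$ restricted to the direct summand $T(A)/p^n$, cf.~\cite[Proposition 2.4]{Yamazaki2005}) is equivalent to injectivity of the generalized Galois symbol $s_{p^n}\colon K(k;E_1,E_2)/p^n\to H^2(k,E_1[p^n]\otimes E_2[p^n])$. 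Using the structure of Somekawa $K$-groups of products of semistable elliptic curves over $p$-adic fields --- \cite{Raskind/Spiess2000} in the (split) multiplicative case and the analyses underlying \cite{Gazaki/Leal2022,Gazaki/Hiranouchi2021,Gaz22} in the potentially good ordinary and ``at most one supersingular'' cases --- I would show that over a sufficiently large such $k$ the symbol $s_{p^n}$ is injective and
\[
T(A)/p^n\;\cong\;K(k;E_1,E_2)/p^n\;\cong\;
\begin{cases}
(\Z/p^n)^2, & \text{both } E_i \text{ potentially ordinary or potentially multiplicative},\\
\Z/p^n, & \text{exactly one } E_i \text{ potentially good supersingular},
\end{cases}
\]
the count reflecting the ``mixed'' graded pieces of $E_1[p^n]\otimes E_2[p^n]$ for the connected/\'etale filtrations of the $p$-divisible groups. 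In particular $\varepsilon_{p^n}^A$ is injective.

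I would then feed this into \autoref{mainlocal}: with $\NS(A_{\kk})$ $\Gal$-trivial, $\varepsilon_{p^n}^A$ injective and $p^n$ odd, the proposition yields that $\pi_\star\colon T(A)/p^n\to A_0(X)/p^n$ is an isomorphism and that $\varepsilon_{p^n}^X$ is injective; by the Saito-pairing equivalence recalled before \autoref{functoriality}, injectivity of $\varepsilon_{p^n}^X$ is equivalent to injectivity of $c_{p^n}^X\colon A_0(X)/p^n\to H^4(X,\mu_{p^n}^{\otimes 2})$. Transporting the description of $T(A)/p^n$ through $\pi_\star$ then identifies $A_0(X_K)/p^n$ with $\Z/p^n$ or $(\Z/p^n)^2$ according to the reduction type of $E_1,E_2$, which is exactly the assertion.

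The main obstacle is the middle step: establishing, in each of the three reduction regimes, that after a suitable finite extension one genuinely obtains injectivity of $s_{p^n}$ together with the precise count $\Z/p^n$ versus $(\Z/p^n)^2$. This requires matching each regime to the appropriate structural and injectivity statements in the literature (Raskind--Spiess for the Tate-curve case, and the ordinary/supersingular analyses in \cite{Gazaki/Leal2022,Gazaki/Hiranouchi2021,Gaz22}), and checking that the extension used to trivialize $\NS(A_{\kk})$ and to acquire split semistable reduction can be refined to the one demanded by the Somekawa-group computation. The remaining steps --- the reduction to working over $K$, and the passage from $A$ to $X=\Kum(A)$ --- are formal given \autoref{mainlocal} and the material preceding \autoref{functoriality}.
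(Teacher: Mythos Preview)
Your overall strategy is exactly the paper's: pass to a finite extension over which $\NS(A_{\kk})$ is Galois-trivial, $A[p^n]$ is rational, and both curves have split semistable reduction; establish injectivity of $c_{p^n}^A$ (equivalently of the Galois symbol $s_{p^n}$) together with the explicit size of $T(A_K)/p^n$; then apply \autoref{mainlocal} to transfer both conclusions to $X$. So the architecture is fine.

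There are, however, two concrete problems in your ``middle step''. First, the references you invoke do not deliver what you need. The papers \cite{Gazaki/Leal2022,Gazaki/Hiranouchi2021,Gaz22} prove finiteness and divisibility statements for $T(A)_{\nd}$, but not the injectivity of $s_{p^n}$ together with the precise value of $T(A_K)/p^n$ over a specified extension. The paper instead cites \cite[Theorem 1.2]{Yamazaki2005} and \cite[Theorem 4.2]{Hiranouchi2014}, and it is Hiranouchi who actually computes $T(A_K)/p^n$ under the hypothesis $A[p^n]\subset A(K)$.

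Second, and more seriously, your case analysis is inverted. Hiranouchi's result (as quoted in the paper) gives $T(A_K)/p^n\cong\Z/p^n$ when $E_1,E_2$ have the \emph{same} reduction type (both good ordinary, or both split multiplicative), and $(\Z/p^n)^2$ when the types differ. Your ``mixed graded pieces'' heuristic leads you to the opposite conclusion; in particular, for two ordinary curves you claim $(\Z/p^n)^2$, but the correct answer is $\Z/p^n$ --- consistent with \autoref{CMcase}, where the ordinary self-product $E\times E$ yields $\Z/p^n$. Since the corollary only asserts ``$\Z/p^n$ or $(\Z/p^n)^2$ depending on the reduction type'', this does not break the statement itself, but the specific identification you wrote is wrong and the heuristic justifying it should be dropped.
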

\begin{proof}
Consider a finite extension $K/k$ such that the N\'{e}ron-Severi group $\NS(A_{\kk})$ has trivial $\Gal(\kk/K)$-action, both elliptic curves have split semistable reduction and  $A[p^n]\subset A(K)$. The first assumption is possible because $\NS(A_{\kk})$ is a finitely generated abelian group. Depending on the reduction type of $E_1, E_2$ it follows by \cite[Theorem 1.2]{Yamazaki2005} and  \cite[Theorem 4.2]{Hiranouchi2014} that the cycle class map 
\[c_{p^n}^A:\CH_0(A_K)/p^n\rightarrow H^4(A_K,\mu_{p^n}^{\otimes 2})\] is injective.  Hiranouchi in \cite{Hiranouchi2014} in fact showed that the group $T(A_K)/p^n$ is isomorphic to $\Z/p^n$ if the two elliptic curves have the same reduction type (either good ordinary or split multiplicative reduction), and $(\Z/p^n)^2$ if the reduction types are different. It then follows by \autoref{mainlocal} that the same holds for the Kummer surface, $A_0(X_K)/p^n\simeq\Z/p^n$ or $A_0(X_K)/p^n\simeq(\Z/p^n)^2$. 

\end{proof}

The issue is that in general the extension $K/k$ considered above depends on the integer $n\geq 1$. 
In the case when both elliptic curves have good ordinary or split multiplicative reduction we can say something much stronger, obtaining a result independent of $n\geq 1$.
\begin{cor}\label{inectivity1} Let $A=E_1\times E_2$ be a product of two elliptic curves over $k$ and $X=\Kum(A)$ its associated Kummer surface. Suppose that both elliptic curves have either potentially split multiplicative reduction or potentially good ordinary reduction. Then there exists a finite extension $L/k$ such that for every odd $n\geq 1$ the cycle class map $c_{n}^{X_L}:\CH_0(X_L)/n\to H^4_{\text{\'{e}t}}(X_L,\mu_n^{\otimes 2})$ is injective and we have an isomorphism of finite groups \[\pi_{\star}:T(A_L)_{\nd}\{2'\}\xrightarrow{\simeq}A_0(X_L)_{\nd}\{2'\}. \] 
\end{cor}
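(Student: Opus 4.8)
The plan is to combine two ingredients: (1) the result of \cite{Gazaki2019}-type reasoning (or more precisely the results already quoted in \autoref{padic1}) applied to the product $A_L = E_{1,L}\times E_{2,L}$, which will give a single finite extension $L/k$ over which the cycle class map $c_n^{A_L}$ (equivalently, the generalized Galois symbol $s_n$) is injective for \emph{all} odd $n$ simultaneously; and (2) \autoref{mainlocal}, which transports this injectivity from $A_L$ to $X_L=\Kum(A_L)$. The point of the reduction hypotheses --- both curves potentially split multiplicative or potentially good ordinary --- is precisely that in these cases the Somekawa group $K(k;E_1,E_2)_{\nd}$ and the injectivity of the Galois symbol are uniform in $n$, by the structure results of Raskind--Spiess \cite{Raskind/Spiess2000} and the explicit computations of Hiranouchi \cite{Hiranouchi2014} (this is in contrast to the general mixed case in \autoref{nontrivial}, where the extension depends on $n$).

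Concretely, first I would choose $L/k$ finite so that: the elliptic curves $E_{1,L},E_{2,L}$ have split semistable reduction (split multiplicative or good ordinary), $\NS(A_{L,\kk})$ has trivial $\Gal(\kk/L)$-action, and $A_L[n]\subset A_L(L)$ for enough torsion --- but here, because of the uniformity, one does \emph{not} need all $n$-torsion rational; one needs only enough to trigger the structure theorem. Under these hypotheses the results of \cite{Raskind/Spiess2000} (for the split multiplicative case) together with \cite{Hiranouchi2014} (for the good ordinary case, and the mixed case) show that the Galois symbol $s_n:K(L;E_{1,L},E_{2,L})/n\to H^2(L,E_{1,L}[n]\otimes E_{2,L}[n])$ is injective for every $n\geq 1$, hence (via \cite[Theorem 2.2, Corollary 2.4.1]{Raskind/Spiess2000} and \cite[Proposition 2.4]{Yamazaki2005}) the cycle class map $c_n^{A_L}:\CH_0(A_L)/n\to H^4_{\text{\'et}}(A_L,\mu_n^{\otimes 2})$ is injective for every $n\geq 1$. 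In particular $\varepsilon_n^{A_L}$ is injective for every $n$, by the equivalence recalled in \autoref{computations1}.

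Now fix any odd $n\geq 1$. By \autoref{mainlocal} applied over $L$ (whose hypothesis $\NS(A_{L,\kk})$ has trivial Galois action holds by our choice of $L$), the injectivity of $\varepsilon_n^{A_L}$ forces $\pi_\star:T(A_L)/n\xrightarrow{\simeq}A_0(X_L)/n$ to be an isomorphism and $\varepsilon_n^{X_L}$ to be injective; by the equivalence of injectivity of $\varepsilon_n$ and of $c_n$ (the Saito pairing argument, valid for the surface $X_L$ over the $p$-adic field $L$), it follows that $c_n^{X_L}:\CH_0(X_L)/n\to H^4_{\text{\'et}}(X_L,\mu_n^{\otimes 2})$ is injective. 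Since this holds for all odd $n$ and since $T(A_L)_{\nd}\{2'\}$ is finite by \autoref{padic1}, taking $n=|T(A_L)_{\nd}\{2'\}|$ and using the surjection \eqref{mainsurjection} together with the isomorphism on mod-$n$ quotients upgrades $\pi_\star$ to the claimed isomorphism of finite groups $T(A_L)_{\nd}\{2'\}\xrightarrow{\simeq}A_0(X_L)_{\nd}\{2'\}$.

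The main obstacle is establishing the \emph{$n$-uniform} injectivity of the Galois symbol for $A_L$ in step (1): one must check that the cited theorems of Hiranouchi and Raskind--Spiess really do produce a single extension $L$ working for all $n$ in the split multiplicative and good ordinary cases, rather than an $n$-dependent one as in the general mixed situation of \autoref{nontrivial}. This is where the special reduction hypothesis is essential, and it is worth stating carefully which torsion-rationality is actually needed. Once that uniformity is in hand, everything else is a formal consequence of \autoref{mainlocal} and the standard Saito-duality equivalence between the two cycle-class/Brauer-pairing maps.
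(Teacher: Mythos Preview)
Your overall strategy matches the paper's proof exactly: pass to an extension $L$ where $\NS(A_{\kk})$ has trivial Galois action and the reduction is honestly split multiplicative or good ordinary, establish injectivity of $c_n^{A_L}$ (equivalently $\varepsilon_n^{A_L}$) for all $n$ simultaneously, then invoke \autoref{mainlocal} for each odd $n$ and finally specialize to $n=|T(A_L)_{\nd}\{2'\}|$. The only point that needs correction is precisely the ``main obstacle'' you flagged, namely the references for the $n$-uniform injectivity in step (1). Hiranouchi's computation in \cite{Hiranouchi2014} requires $A[p^n]\subset A(K)$, so the extension it produces depends on $n$ and does not give uniformity; and Raskind--Spiess is not what the paper uses for the split multiplicative case. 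The paper resolves the obstacle as follows: in the split multiplicative case it cites \cite[Theorem 1.2]{Yamazaki2005}, which gives injectivity of $c_n^{A_K}$ for all $n\geq 1$ over a single $K$; in the good ordinary case it first observes via \cite[Theorem 3.5]{Raskind/Spiess2000} that $T(A_K)$ is $m$-divisible for every $m$ coprime to $p$ (so $c_m^{A_K}$ is trivially injective), and then for the $p$-part invokes \cite[Theorem 1.4]{Gazaki/Leal2022}, which furnishes a single finite extension $L/K$ with $c_{p^n}^{A_L}$ injective for all $n\geq 1$. With these references in place your argument goes through verbatim and coincides with the paper's.
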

\begin{proof}

We consider a finite extension $K/k$ such that the curves $E_{iK}$ have both either good ordinary or split multiplicative reduction, and the N\'{e}ron-Severi group $\NS(A_{\kk})$ has trivial $\Gal(\overline{K}/K)$-action. 

\textbf{Case 1:} Suppose that $E_{1K}, E_{2K}$ have split multiplicative reduction. It then follows by \cite[Theorem 1.2]{Yamazaki2005} that the cycle class map $c_n^{A_K}$ is injective for every $n\geq 1$. 

\textbf{Case 2:} Suppose that $E_{1K}, E_{2K}$ have good ordinary reduction. For an integer $m$ coprime to $p$ it follows by \cite[Theorem 3.5]{Raskind/Spiess2000} that the group $T(A)$ is $m$-divisible, which implies that the cycle class map $c_m^{A_K}$ is injective. We next consider the $p$-part. It follows by \cite[Theorem 1.4]{Gazaki/Leal2022} that there exists a finite extension $L/K$ such that the cycle class map $c_{p^n}^{A_L}:\CH_0(A_L)/p^n\to H^4_{\text{\'{e}t}}(A_L,\mu_{p^n}^{\otimes 2})$ is injective for all $n\geq 1$. 

 Fix an extension $L/k$ as above and $n\geq 1$ an odd integer. 
  \autoref{mainlocal} gives that the map $c_n^{X_L}$ is injective for all $n\geq 1$ and the push-forward 
 \[\pi_\star: T(A_L)/n\to A_0(X_L)/n\] is an isomorphism. 
  To conclude that $\pi_\star$ induces an isomorphism $T(A_L)_{\nd}\{2'\}\xrightarrow{\simeq}A_0(X_L)_{\nd}\{2'\}$ we apply the above for $n=N=|T(A_L)_{\nd}\{2'\}|$  (recall this value is finite by \cite[Theorem 1.2]{Gazaki/Leal2022}). 
 
%


\end{proof}

Using results obtained in \cite{Gazaki/Leal2022} we can describe cases when the conclusions of \autoref{inectivity1} hold already over the base field $k$, allowing us to fully compute the group $A_0(X)_{\nd}\{2'\}$.  

\begin{cor}\label{injectivity2} Let $p>2$,  let $A=E_1\times E_2$ be a product of two elliptic curves over $k$ with \textbf{good ordinary reduction,} and let $X=\Kum(A)$. Assume that $A[p]\subset A(k)$ and let $n\geq 1$ be the largest positive integer such that $A[p^n]\subset A(k)$. Suppose that the N\'{e}ron-Severi group $\NS(A_{\overline{k}})$ has trivial $\Gal(\overline{k}/k)$-action and that the extension $k(A[p^{n+1}])/k$ has wild ramification. Then the cycle class maps $c_{p^m}^A, c_{p^m}^X$ are injective for all $m\geq 1$ and we have isomorphisms 
\[A_0(X)_{\nd}\{2'\}\simeq T(A)_{\nd}\simeq \Z/p^n.\]   
\end{cor}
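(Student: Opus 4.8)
The strategy is to determine the structure of $T(A)$ over $k$ from the good ordinary hypothesis, transport it to $X$ through \autoref{mainlocal}, and then stabilize the resulting mod-$p^m$ isomorphisms to extract the non-divisible summand. First I would invoke \cite[Theorem 1.4]{Gazaki/Leal2022}: since $A=E_1\times E_2$ has good ordinary reduction, $A[p]\subset A(k)$, $n$ is maximal with $A[p^n]\subset A(k)$, and $k(A[p^{n+1}])/k$ is wildly ramified, that result gives that the cycle class map $c_{p^m}^A$ is injective for all $m\geq 1$ and that $T(A)_{\nd}\simeq\Z/p^n$. Good ordinary reduction also makes $T(A)$ divisible by every integer coprime to $p$ (\cite[Theorem 3.5]{Raskind/Spiess2000}), so $T(A)_{\nd}$ is $p$-primary; as $p>2$ this gives $T(A)_{\nd}\{2'\}=T(A)_{\nd}\simeq\Z/p^n$ and $T(A)/p^m\simeq\Z/p^{\min(m,n)}$ for every $m$. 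By the equivalence between injectivity of $c_{p^m}^A$ and of $\varepsilon_{p^m}^A$ recalled in the discussion preceding \autoref{mainlocal}, the map $\varepsilon_{p^m}^A$ is injective for all $m$.

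Next I would apply \autoref{mainlocal}, whose hypotheses are satisfied here: we are in the set-up \eqref{setup} (both curves being good ordinary, at most one is potentially good supersingular), $\NS(A_{\overline{k}})$ has trivial Galois action by assumption, and each $p^m$ is odd since $p>2$. Hence for every $m\geq 1$ the pushforward $\pi_\star\colon T(A)/p^m\to A_0(X)/p^m$ is an isomorphism and $\varepsilon_{p^m}^X$ is injective. Re-running that same duality argument for the surface $X$---together with the observation that $\CH_0(X)/A_0(X)$ is torsion-free, being a subgroup of $\Z$, so that injectivity on $A_0(X)/p^m$ is compatible with the degree filtration and propagates to all of $\CH_0(X)/p^m$---one converts injectivity of $\varepsilon_{p^m}^X$ into injectivity of $c_{p^m}^X$ for all $m$.

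It remains to identify $A_0(X)_{\nd}\{2'\}$. By \autoref{padic1} the group $A_0(X)_{\nd}$ is finite, so $A_0(X)\simeq A_0(X)_{\dv}\oplus A_0(X)_{\nd}$ and therefore $A_0(X)/p^m\simeq A_0(X)_{\nd}/p^m$, which for $m$ sufficiently large equals the $p$-primary part $A_0(X)_{\nd}\{p\}$. Since $\pi_\star$ identifies this with $T(A)/p^m\simeq\Z/p^{\min(m,n)}$, taking $m\geq n$ yields $A_0(X)_{\nd}\{p\}\simeq\Z/p^n$. Finally, the surjection \eqref{mainsurjection} exhibits $A_0(X)_{\nd}\{2'\}$ as a quotient of $T(A)_{\nd}\{2'\}=\Z/p^n$, hence $p$-primary, so $A_0(X)_{\nd}\{2'\}=A_0(X)_{\nd}\{p\}\simeq\Z/p^n$; as source and target of \eqref{mainsurjection} then have the same finite order $p^n$, this surjection is an isomorphism, giving the asserted $\pi_\star\colon T(A)_{\nd}\xrightarrow{\simeq}A_0(X)_{\nd}\{2'\}$.

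The step I expect to be the real obstacle is the first one: it requires the form of the injectivity and structure theorem for products of good ordinary elliptic curves that holds over $k$ itself, rather than only after a finite base change as in \autoref{inectivity1}, with the wild ramification of $k(A[p^{n+1}])/k$ serving precisely as the hypothesis that forces $c_{p^m}^A$ to be injective over $k$ and pins $T(A)_{\nd}$ down to $\Z/p^n$. Everything afterward is formal, given \autoref{mainlocal} and the divisible-plus-finite decomposition, amounting to routine bookkeeping of $p$-primary parts.
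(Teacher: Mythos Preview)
Your proposal is correct and follows essentially the same route as the paper: the paper's proof is the one-line ``Everything follows from \cite[Corollary 3.17]{Gazaki/Leal2022} and \autoref{inectivity1},'' and you have unpacked exactly this---invoking the Gazaki--Leal result (you cite their Theorem 1.4 rather than Corollary 3.17, but the content you extract is the right one) to get injectivity of $c_{p^m}^A$ and $T(A)_{\nd}\simeq\Z/p^n$ over $k$ itself, then feeding this into \autoref{mainlocal} as in the proof of \autoref{inectivity1} to transfer everything to $X$.
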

\begin{proof}
Everything follows from \cite[Corollary 3.17]{Gazaki/Leal2022}) and \autoref{inectivity1}.  

\end{proof}



The wild ramification assumption is very often satisfied. Namely, if we start with a product $A=E_1\times E_2$ of elliptic curves with good ordinary reduction over $\Q_p$ and set $k_n=\Q_p(A[p^n])$, then the extension $k_{n+1}/k_n$ has almost always wild ramification. The following Corollary gives a specific case when this is true and additionally the assumption on the N\'{e}ron-Severi group is satisfied. We will revisit this case when we study local-to-global principles in the next section. 

\begin{cor}\label{CMcase} 
	 Let $p>2$ and let $A=E\times E$ be the self product of an elliptic curve $E$ over $\Q_p$ with good ordinary reduction.  Suppose that $E$ has complex multiplication by the full ring of integers of a quadratic imaginary field $K$, with all endomorphisms defined over $\Q_p$.  
	For each $n\geq 1$ let $k_n=\Q_p(E[p^n])$. Then we have isomorphisms 
	\[A_0(X_{k_n})_{\nd}\{2'\}\simeq  T(A_{k_n})_{\nd} \simeq\Z/p^n.\]
\end{cor}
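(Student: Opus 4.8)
The plan is to reduce everything to \autoref{injectivity2}, applied with the base field taken to be $k_n$ in place of $k$: I would check that $A_{k_n}=E_{k_n}\times E_{k_n}$ satisfies the four hypotheses of that corollary, and then simply read off the stated isomorphisms. Two of the hypotheses are immediate. First, good ordinary reduction is preserved under base change, so $E_{k_n}$ (hence $A_{k_n}$) has good ordinary reduction, and $E[p]\subseteq E(k_n)$ by construction, whence $A[p]=E[p]\times E[p]\subseteq A(k_n)$. Second, for $A=E\times E$ the Galois action on $\NS(A_{\overline{\Q_p}})$ is governed by the action on $\End(E_{\overline{\Q_p}})=\mathcal O_K$, which is trivial precisely because all endomorphisms of $E$ are defined over $\Q_p$; a fortiori $\Gal(\overline{\Q_p}/k_n)$ acts trivially on $\NS(A_{\overline{\Q_p}})$.

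The substance of the proof is the ramification input. Since $A[p^{m}]=E[p^{m}]\times E[p^{m}]$, we have $k_n(A[p^{n+1}])=\Q_p(E[p^{n+1}])=k_{n+1}$, so it suffices to show that $k_{n+1}/k_n$ is a nontrivial, wildly ramified extension; this simultaneously gives $A[p^{n+1}]\not\subseteq A(k_n)$ (so that $n$ is the largest exponent with $A[p^{n}]\subseteq A(k_n)$) and supplies the wild‑ramification hypothesis. Here I would invoke the structure theory of CM elliptic curves with good ordinary reduction. Because $E$ has CM by $\mathcal O_K$ and ordinary reduction, $p$ splits in $K$, say $p\mathcal O_K=\mathfrak p\bar{\mathfrak p}$ (if $p$ were inert or ramified, the reduction $\widetilde E$ would be supersingular). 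Since the complex multiplication is defined over $\Q_p$, the action of $\mathcal O_K\otimes\Z_p=\Z_p\times\Z_p$ on $T_pE$ is $\Gal(\overline{\Q_p}/\Q_p)$-equivariant, giving a Galois‑stable splitting $T_pE=T_{\mathfrak p}E\oplus T_{\bar{\mathfrak p}}E$ into free rank‑one $\Z_p$-modules, on which Galois acts through characters $\chi_{\mathfrak p}$ and $\chi_{\bar{\mathfrak p}}$. One summand is the Tate module of the formal group $\widehat E$, which has height $1$ by ordinarity, so it is an unramified twist of $\Z_p(1)$ and the corresponding character restricts on inertia to the cyclotomic character; the other summand is the (unramified) Tate module of $\widetilde E$. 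Hence the inertia subgroup of $\Gal(\overline{\Q_p}/\Q_p)$ surjects onto $(\Z/p^{m})^\times$ under the action on $E[p^{m}]$, so $e(\Q_p(E[p^{m}])/\Q_p)=(p-1)p^{m-1}$. Taking ratios of ramification indices gives $e(k_{n+1}/k_n)=p$ for every $n\geq 1$; in particular $k_{n+1}\neq k_n$ and $k_{n+1}/k_n$ is wildly ramified.

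Granting this, all hypotheses of \autoref{injectivity2} hold over $k_n$, and that corollary yields both that the cycle class maps $c^{A_{k_n}}_{p^{m}}$, $c^{X_{k_n}}_{p^{m}}$ are injective for all $m\geq 1$ and the isomorphisms
\[
A_0(X_{k_n})_{\nd}\{2'\}\simeq T(A_{k_n})_{\nd}\simeq\Z/p^{n},
\]
which is the assertion. The one genuinely nontrivial step is the ramification computation of the second paragraph: it rests on Lubin--Tate / Serre--Tate theory for CM elliptic curves with good ordinary reduction, and the hypothesis that all endomorphisms are defined over $\Q_p$ enters there crucially, being exactly what makes the splitting of the $p$-adic Tate module a splitting of Galois modules.
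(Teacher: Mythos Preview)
Your proof is correct and follows the same strategy as the paper: verify the hypotheses of \autoref{injectivity2} for $A_{k_n}$, the only nontrivial ones being the triviality of the Galois action on $\NS(A_{\overline{\Q_p}})$ and the wild ramification of $k_{n+1}/k_n$, and then read off the conclusion. The paper simply cites \cite[Proposition 2.17]{Gazaki2022weak} for the N\'eron--Severi statement and \cite[Section 3.2.1]{Gazaki2022weak} for the ramification index $e_{k_n}=(p-1)p^{n-1}$, whereas you supply the arguments directly (the identification of $\NS(E\times E)$ with $\Z^2\oplus\End(E_{\overline{\Q_p}})$, and the CM splitting of $T_pE$ into an unramified summand and an unramified twist of $\Z_p(1)$); this is the same content unpacked.
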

\begin{proof}
It follows by \cite[Proposition 2.17]{Gazaki2022weak} that the assumption on the N\'{e}ron-Severi group is satisfied in this case. Moreover, for each $n\geq 1$ the extension $k_n=\Q_p(E[p^n])$ has absolute ramification index $e_{k_n}=(p-1)p^{n-1}$. In particular the extension $k_{n+1}/k_n$ has wild ramification. For more details on the above claim see \cite[section 3.2.1]{Gazaki2022weak}. 

\end{proof}

\subsection{Behavior over the maximal unramified extension}\label{maxunr} Let $A=E_1\times E_2$ be a product of elliptic curves over a $p$-adic field $k$ with good ordinary or almost ordinary reduction and let $X=\Kum(A)$. It follows that $A_0(X)_{\nd}$ is a finite $p$-group. We note that even when this group is nontrivial, it seems that it becomes trivial after extending to the maximal unramified extension $k^{\ur}$ of $k$. This is at least true in the case we are in the situation of \autoref{injectivity2}. 
To see this, suppose that $A_0(X)_{\nd}\simeq T(A)_{\nd}\simeq\Z/p^n$ for some $n\geq 1$. Let $k'/k$ be a finite unramified extension. It follows by \autoref{injectivity2} that $A_0(X_{k'})_{\nd}\simeq T(A_{k'})_{\nd}\simeq\Z/p^n$. Moreover, it follows by Lemma 4.3 in \cite{Gazaki/Hiranouchi2021} and its proof (see also Lemma 4.1) that the norm map $N_{k'/k}:T(A_{k'})\to T(A)$ is surjective. We conclude that $N_{k'/k}$ induces an isomorphism 
\[N_{k'/k}:A_0(X_{k'})_{\nd}\xrightarrow{\simeq} A_0(X)_{\nd}.\]
Since $N_{k'/k}\res_{k'/k}=[k':k]$, if we take $k'/k$ to be the unique unramified extension of degree $p^n$ we see that the restriction map 
\[\res_{k'/k}:A_0(X_{k})_{\nd}\to A_0(X_{k'})_{\nd}\] is zero. We expect this to be true more generally in the case of good reduction. The divisibility of $A_0(X_{k^{\ur}})$ was suggested as a logical possibility in \cite{Esnault/Wittenberg} in the paragraph following Theorem 5.1, where the authors proved that for every semistable $K3$ surface over $\C((t))$ the group $A_0(X)$ is divisible. In the current article we cannot say anything definitive  
about the behavior over $k^{\ur}$ in the case of bad semistable reduction.

\vspace{3pt}

\section{Weak Approximation for zero-cycles}\label{weaksection} In this section we discuss applications of the local results obtained in sections \eqref{section2} and \eqref{localresults} to local-to-global principles for zero-cycles. Throughout this section we will be working over an algebraic number field $F$.  We will denote by $\Omega$ the set of all places of $F$ and by $\Omega_f$ the set of all finite places. For a place $v\in\Omega$ we will denote by $F_v$ the completion of $F$ at $v$. Moreover, for a variety $X$ over $F$ we will denote by $X_v:=X\otimes_F F_v$ the base change to $F_v$. For a finite extension $K/F$ we will denote by $\Omega_K$ the set of places of $K$. 


For a smooth projective variety $Y$ over $F$ the \textit{adelic Chow group} $\CH_{0,\mathbf{A}}(Y)$ is defined to be the infinite product $\displaystyle\prod_{v\in\Omega}\CH_0'(Y_v)$, where $\CH_0'(Y_v)=\CH_0(Y_v)$ for every finite place $v$, while at infinite places $\CH_0'(Y_v)$ is the cokernel of the norm (push-forward) map 
\[N_{\overline{F}_v/F_v}:\CH_0(Y_v\otimes_{F_v}\overline{F}_v)\to \CH_0(Y_v).\] It follows that $\CH_0'(Y_v)=0$ for all infinite complex places, while for the infinite real places the group $\CH_0'(Y_v)$ is $2$-torsion (see \cite[Th\'{e}or\`{e}me 1.3]{Colliot-Thelene1995}). We define $A_{0,\mathbf{A}}(Y)$ and $A_0'(Y_v)$ in a similar way. \black 

For each finite place $v\in \Omega_f$ we consider the Brauer-Manin pairing (cf.~\autoref{computations1}) 
\[\langle\cdot,\cdot\rangle_v:\CH_0(Y_v)\times\Br(Y_v)\to\Br(F_v)\simeq\Q/\Z. \]
 Such a pairing can be defined also for every real place $v$ (by embedding $\Br(\mathbb{R})\simeq\Z/2$ to $\Q/\Z$), and in this case the group $N_{\overline{F}_v/F_v}(\CH_0(Y_v\otimes_{F_v}\overline{F}_v))$ lies in the left kernel of $\langle\cdot,\cdot\rangle_v$. 

The local pairings induce a global pairing,
\[\langle\cdot,\cdot\rangle:\CH_{0,\mathbf{A}}(X)\times\Br(X)\rightarrow\Q/\Z,\] defined by $\langle(z_v)_v,\alpha\rangle=\sum_v\langle z_v,\iota_v^\star(\alpha)\rangle_v$, where $\iota_v^\star$ is the pullback of $\iota_v: X_v\to X$. 
The short exact sequence of global class field theory, \[0\rightarrow\Br(F)\rightarrow\bigoplus_{v\in\Omega}\Br(F_v)\xrightarrow{\sum\inv_v}\Q/\Z\rightarrow 0,\] implies that the group $\CH_0(X)$ lies in the left kernel of $\langle\cdot,\cdot\rangle$.

We are interested in the following conjecture due to Colliot-Th\'{e}l\`{e}ne and Sansuc for geometrically rational varieties and to Kato and S. Saito for general smooth projective varieties. 
\begin{conj}\label{localtoglobal2} (\cite[Conjecture A] 
	{Colliot-Thelene/Sansuc1981}, \cite[Section 7]{Kato/Saito1986}, see also \cite[Section 1.1]{Wittenberg2012})  Let $Y$ be a smooth, projective, geometrically connected variety over a number field $F$. The following complex is exact:
\begin{equation}\label{complex1}\widehat{\CH_0(Y)}\stackrel{\Delta}{\longrightarrow}
\widehat{\CH_{0,\mathbf{A}}(Y)}\rightarrow\Hom(\Br(Y),\Q/\Z).
\end{equation}
\end{conj}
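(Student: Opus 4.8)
Since \autoref{localtoglobal2} is an open conjecture, the most honest ``proof proposal'' is to lay out the strategy underlying all known partial cases and to isolate precisely where it fails in general. The complex encodes the principle that the Brauer--Manin obstruction is the only obstruction to weak approximation for zero-cycles: exactness at the middle term asserts that every adelic class orthogonal to $\Br(Y)$ lifts, after profinite completion, to a global class. My plan is to reduce this statement for zero-cycles to the corresponding statement for \emph{rational points}, exploiting the two features that make zero-cycles more flexible than points---they are supported on finite collections of closed points over varying finite extensions of $F$, and they may be moved within their rational equivalence class.

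I would begin by fixing the duality framework. The right-hand map is the global Brauer--Manin pairing, assembled from the local pairings $\langle\cdot,\cdot\rangle_v\colon \CH_0(Y_v)\times\Br(Y_v)\to\Q/\Z$ recalled above; the fundamental sequence of global class field theory $0\to\Br(F)\to\bigoplus_v\Br(F_v)\to\Q/\Z\to 0$ gives the easy inclusion that global classes are orthogonal to $\Br(Y)$, so that \eqref{complex1} is indeed a complex. The content is the reverse inclusion, which I would phrase as a Poitou--Tate--type exactness. Using the degree map I would first split off the degree-zero part and reduce to exactness of the analogue for $\widehat{A_0(Y)}$ as in \eqref{complex00}, the degree component being controlled separately by the local--global behaviour of the index.

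The heart of the argument is the fibration method. Choosing a dominant morphism $f\colon Y\dashrightarrow\mathbb{P}^1_F$ and spreading an adelic zero-cycle out along the fibres, one reduces---via the machinery of Liang and of Harpaz--Wittenberg---the construction of a global zero-cycle on $Y$ to the construction of rational points on fibres over suitably chosen closed points of the base. The decisive input is that for every finite extension $L/F$ the Brauer--Manin obstruction is the only obstruction to weak approximation for rational points on $Y_L$; granting this, a Hilbert-irreducibility and strong-approximation argument on the base propagates the statement from points to zero-cycles. The $2$-torsion contributions of the real places are disposed of separately, using that the norm image lies in the left kernel of $\langle\cdot,\cdot\rangle_v$ at the archimedean places.

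The main obstacle is that the conjecture is genuinely open, and the strategy is conditional on two counts. First, the rational-point input is itself unknown outside special classes (notably rationally connected varieties, where it follows from descent and fibration), so the reduction yields only conditional conclusions in general. Second, and more seriously for the varieties of interest here, a $K3$ surface has $H^2(Y,\mathcal{O}_Y)\neq 0$ and hence positive geometric genus; the local groups $\CH_0(Y_v)$ are then enormous, there is no fibration by rationally connected varieties to feed into the method, and one has no handle on the completed group $\widehat{A_0(Y)}$ in its entirety. For such $Y$ the only feasible route---carried out in the remainder of this paper---is to abandon the general statement and instead verify the conjecture one prime $p$ at a time for geometrically Kummer families, transferring the problem to an abelian surface and combining this with the explicit local computations of the previous sections.
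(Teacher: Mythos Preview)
The statement \autoref{localtoglobal2} is recorded in the paper as an open \emph{conjecture}, not a theorem, so there is no proof in the paper to compare against; your proposal correctly recognises this and accurately summarises the conditional fibration-method strategy and its limitations for $K3$ surfaces. Your closing paragraph---abandoning the general statement and instead working one prime $p$ at a time for geometrically Kummer surfaces via transfer to the abelian surface---is precisely the programme the paper then carries out in \autoref{localglobal1}.
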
 
For a general $K3$ surface $X/F$ Ieronymou (\cite{Ieronymou2021}) obtained the following result, which is a first step towards \autoref{localtoglobal2}. 
\begin{theo}\label{Ieronymou} (\cite[Theorem 1.3]{Ieronymou2021}) Let $X$ be a $K3$ surface  over $F$. Suppose that for all finite extensions $L/F$ the set $X(L)$ is dense in $X(\mathbf{A}_L)^{\Br(X_L)}$. Let $\{z_v\}_{v\in\Omega}$ be a family of local zero-cycles of constant degree orthogonal to $\Br(X)$. Then for every $n\geq 1$ there exists a zero-cycle $z\in\CH_0(X)$ of the same degree such that $z\equiv z_v$ in $\CH_0(X_v)/n$ for all $v\in\Omega$. 
\end{theo}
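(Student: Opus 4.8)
The plan is to deduce the statement from the arithmetic of rational points by means of the fibration method for zero-cycles of Liang \cite{Liang2013} and Harpaz--Wittenberg \cite{Harpaz/Wittenberg2018}; the guiding observation is that a zero-cycle of degree $\delta$ is assembled from closed points, and a closed point of $X$ is exactly a rational point of $X$ over a finite extension of $F$, so the hypothesis on $X(L)$ over all finite $L/F$ is precisely the input one wants. I would first reduce to approximation at a finite set of places. Fix $n\geq 1$ and let $\delta$ be the common degree of the family $(z_v)_v$. For a place $v$ of good reduction with $v\nmid n$ the residue characteristic is coprime to $n$, so $A_0(X_v)/n=0$ by \cite[Corollary~0.10]{Saito/Sato2010}; hence $\CH_0(X_v)/n$ is faithfully recorded by the degree map, and any two zero-cycles on $X_v$ of degree $\delta$ coincide in $\CH_0(X_v)/n$. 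Enlarging $S$ so as to contain the archimedean places, the places dividing $n$, and the places of bad reduction, it suffices to produce $z\in\CH_0(X)$ of degree $\delta$ with $z\equiv z_v$ in $\CH_0(X_v)/n$ for the finitely many $v\in S$.

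Next I would normalise the local data. Since $A_0$ and $\CH_0/n$ of a surface are birational invariants \cite[Proposition~6.3]{CT79}, one may work with a convenient birational model of $X$. Writing each $z_v$ as a difference of two effective zero-cycles and applying a moving lemma, one may replace each $z_v$, without altering its class in $\CH_0(X_v)/n$ or its degree, by an effective zero-cycle supported on closed points with separable residue fields whose supports avoid any prescribed proper closed subset. Each such closed point is a rational point of $X$ over a finite extension of $F_v$, so the problem becomes one of finding finitely many closed points on $X/F$ --- equivalently, rational points on $X$ over suitable finite extensions $L/F$ --- whose completions at the places in $S$ realise the normalised local cycles modulo $n$ and whose combined degree is $\delta$.

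To invoke the rational-point hypothesis I would run the Brauer--Manin bookkeeping. By functoriality of the Brauer--Manin pairing under proper pushforward, together with the identity $\langle N_{L/F}(w),\alpha\rangle=\langle w,\operatorname{res}\alpha\rangle$ relating corestriction to restriction, the assumed orthogonality of $(z_v)_v$ to $\Br(X)$ is converted into Brauer--Manin orthogonality conditions, over the extensions $L/F$ at hand, for compatible adelic points of the base changes $X_L$. The hypothesis that $X(L)$ is dense in $X(\mathbf{A}_L)^{\Br(X_L)}$ for every finite extension $L/F$ then supplies genuine $L$-rational points whose associated closed points assemble to a zero-cycle $z$ on $X$ of degree $\delta$ that is congruent to $z_v$ modulo $n$ at every $v\in S$ --- the desired global zero-cycle.

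The hard part, and the technical core of the argument, is the construction of the extensions $L$ with control of their completions above $S$: one cannot simply take $L$ to be a compositum of the local residue fields, and must instead realise the relevant extensions as fibres of an auxiliary pencil of curves on $X$ (pushing the local cycles down to $\mathbb{P}^1$, where $\CH_0$ is trivial, choosing a global zero-cycle on $\mathbb{P}^1$ and lifting it back) and then appeal to a Hilbert-irreducibility statement guaranteeing that a fibre with the prescribed local behaviour can be chosen to be a field over which the density hypothesis applies. This is where the fibration method of \cite{Harpaz/Wittenberg2018} is used in earnest; the finiteness features of $K3$ surfaces --- finite generation of $\NS(X_{\overline{F}})$ and finiteness of $\Br(X)/\Br_0(X)$ --- ensure that the Brauer obstruction to be matched is governed by a finite group and can be handled one place at a time. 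It is also the reason the conclusion is only a congruence modulo $n$ rather than full weak approximation: the auxiliary construction introduces finitely many extra primes over which only a modulo-$n$ grip on the local cycles is available, which is harmless once everything is read modulo $n$.
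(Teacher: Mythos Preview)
The paper does not prove this theorem; it is quoted from \cite[Theorem 1.2]{Ieronymou2021} and used as a black box in the proof of \autoref{localglobal1}. There is therefore no proof in the present paper to compare your proposal against.

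That said, your outline is a fair high-level summary of the strategy Ieronymou actually follows: reduce to a finite set of places via \cite{Saito/Sato2010}, normalise the local cycles to effective reduced cycles with separable residue fields, and run the fibration method of Liang and Harpaz--Wittenberg over finite extensions, invoking the finiteness of $\Br(X)/\Br_0(X)$ for $K3$ surfaces (Skorobogatov--Zarhin) to control the Brauer obstructions. As a sketch of the architecture it is accurate; but the substantive content --- building the pencil, tracking Brauer classes along fibres, and the Hilbert-irreducibility step with the required local control --- is only gestured at in your last paragraph, and carrying it out is exactly what \cite{Ieronymou2021} does. Your proposal is not a proof so much as a correct identification of where the proof lives.
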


In what follows we focus on cycles of degree $0$. For a variety $Y$ over $F$ \autoref{localtoglobal2} would imply that the following complex is exact 
\begin{equation}\label{complex2}\widehat{A_0(Y)}\stackrel{\Delta}{\longrightarrow}
\widehat{A_{0,\mathbf{A}}(Y)}\rightarrow\Hom(\Br(Y)/\Br_0(Y),\Q/\Z).
\end{equation} 
It is easy to see that if $Y(F)\neq\emptyset$, then the converse is also true. Even when this is not the case, one may focus only on the exactness of \eqref{complex2}. 

In this section we want to strengthen \autoref{Ieronymou} for geometrically split Kummer $K3$ surfaces in two different directions. Namely, we focus on the following questions: 
\begin{enumerate}
\item[1.] Fixing a prime $p$, we may apply \autoref{Ieronymou} for $p^n$, where $n\geq 1$. When can we pass to the limit for $n$? 
\item[2.] Can we obtain any unconditional results? 
\end{enumerate}


To elaborate more on the first question, if we fix a prime number $p$, we can ask whether the following complex is exact 
\begin{equation}\label{complexp}\varprojlim\limits_{n}A_0(Y)/p^n\stackrel{\Delta}{\longrightarrow}
\prod_{v\in\Omega}\varprojlim\limits_{n}A_0'(Y_v)/p^n\rightarrow\Hom(\Br(Y)\{p\}/\Br_0(Y)\{p\},\Q/\Z). 
\end{equation}
Note that proving exactness of complex \eqref{complex2} is equivalent to proving exactness of \eqref{complexp} for every prime $p$.  Our first result is the following.

\begin{theo}\label{localglobal1} 
Let $X$ be a geometrically split Kummer $K3$ surface over a number field $F$. 
There exists an infinite set $T$ of prime numbers $p$ for which the following are true:
\begin{enumerate}
\item[(a)]  For every $p\in T$ the middle term of the complex \eqref{complexp} is the following finite group \[\prod_{v|p}A_0(X_v)_{\nd}\{p\}.\] In particular, it consists of genuine local zero-cycles. 
\item[(b)] Suppose that for every finite extension $L/F$ the set $X(L)$ is dense in the Brauer set $X(\mathbf{A}_L)^{\Br(X_L)}$. Then for every $p\in T$ the complex \eqref{complexp} is exact.
\item[(c)] For all but finitely many $p\in T$ the middle term of the complex \eqref{complexp} vanishes and for these primes the density assumption in (b) can be dropped. 
\end{enumerate} 
\end{theo}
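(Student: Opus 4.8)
The plan is to reduce everything to the corresponding statement for the product of elliptic curves $E_1\times E_2$ (after a finite base change $K/F$ making $X_K\simeq\Kum(A)$ with $E_1\times E_2\xrightarrow{\phi}A$), and then invoke the local results of \autoref{section2} and \autoref{localresults} place by place. First I would isolate a Chebotarev-type condition on $p$ that simultaneously guarantees: (1) $p$ is odd, unramified in $K$, and coprime to $2[K:F]\deg(\phi)$; (2) $p$ is a prime of good reduction for $X$, $A$, and $E_1\times E_2$; (3) both $E_1,E_2$ have good ordinary or almost ordinary reduction at every place $v\mid p$ of $F$ — this can be arranged for a positive-density (hence infinite) set $T$ of primes, since for each factor $E_i$ the ordinary primes have density $1$ when $E_i$ is non-CM and density $1/2$ (those split in the CM field) when $E_i$ is CM, and we only need to avoid the supersingular locus of at most one factor being violated; and (4) for all but finitely many $p\in T$, the relevant torsion is already trivial over $F_v$ (this is where the ``all but finitely many'' clause in (c) comes from). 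For $p\in T$ and any $v\nmid p$, combining \cite[Lemma 4.2]{Matsumoto2015} (good reduction of $X$), \cite[Proposition 3.4]{Ieronymou2021} and \cite[Corollary 0.10]{Saito/Sato2010} gives that $A_0(X_v)$ is $p$-divisible, so $\varprojlim_n A_0'(X_v)/p^n=0$; for infinite places the contribution is $2$-torsion hence killed since $p$ is odd. Thus the middle term of \eqref{complexp} collapses to $\prod_{v\mid p}\varprojlim_n A_0(X_v)/p^n$, and by \autoref{padic1}(i) each $A_0(X_v)$ is the sum of a $p$-divisible group and the finite group $A_0(X_v)_{\nd}\{p\}$ (using that $A_0(X_v)_{\nd}$ has exponent prime to the odd prime $p$ only in the bad-reduction situation — here in good ordinary/almost ordinary reduction \autoref{unramified1}-type arguments and \autoref{coprimeto2} force the $p$-part to sit in the non-divisible summand), giving exactly $\prod_{v\mid p}A_0(X_v)_{\nd}\{p\}$. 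This proves (a), and genuineness is immediate because each summand is an honest local Chow class.

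For part (c), when $p$ is chosen (all but finitely many of $T$) so that over each $F_v$, $v\mid p$, there is no nontrivial $p$-torsion contribution — concretely, when $A[p]\not\subset A(F_v)$ so that the explicit computations of \autoref{injectivity2}/\autoref{CMcase} yield $A_0(X_v)_{\nd}\{p\}=0$, or more simply when $T(E_{1,v}\times E_{2,v})_{\nd}\{p\}=0$ by \cite[Theorem 1.4]{Gazaki/Hiranouchi2021} together with \autoref{coprimeto2} — the middle term vanishes, so the complex \eqref{complexp} is trivially exact with no density hypothesis. Here I would use that $\Q_p(E_i[p])/\Q_p$ is nontrivially ramified for all but finitely many $p$ (a standard fact about the image of Galois on torsion points), so $A[p]\not\subset A(F_v)$ outside a finite set; this handles (c).

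Part (b) is the step I expect to be the main obstacle, since it requires genuinely producing global zero-cycles. The strategy is a ``passing to the limit'' refinement of \autoref{Ieronymou}. Fix $p\in T$ and a family $(z_v)_v$ representing an element of the middle term of \eqref{complexp} orthogonal to $\Br(X)\{p\}$; by (a) we may assume $z_v=0$ for $v\nmid p$ and $z_v\in A_0(X_v)_{\nd}\{p\}$ for $v\mid p$. For each fixed $n$, \autoref{Ieronymou} (applicable under the density hypothesis, after noting orthogonality to $\Br(X)$ follows from orthogonality to $\Br(X)\{p\}$ since the prime-to-$p$ components are trivial) yields a global zero-cycle $z^{(n)}\in\CH_0(X)$ of degree $0$ with $z^{(n)}\equiv z_v$ in $\CH_0(X_v)/p^n$ for all $v$. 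The difficulty is that these $z^{(n)}$ need not be compatible as $n$ grows, so one cannot naively take a limit. The resolution I propose: since $A_0(X)/p^n$ surjects onto $\prod_{v\mid p}A_0(X_v)_{\nd}\{p\}/p^n$ and the target stabilizes once $p^n$ exceeds the exponent $N=|\prod_{v\mid p}A_0(X_v)_{\nd}\{p\}|$ of this finite group, it suffices to apply \autoref{Ieronymou} with a single $n$ satisfying $p^n\ge N$ and to check that the resulting class in $\varprojlim_n A_0(X)/p^n$ maps to $(z_v)_v$; the inverse limit on the global side is taken care of because $\varprojlim_n A_0(X)/p^n$ surjects onto the (now stable) finite product. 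So the one subtle point is verifying that the map $\varprojlim_n A_0(X)/p^n \to \prod_{v\mid p}\varprojlim_n A_0'(X_v)/p^n$ hits the prescribed class — which follows by combining the degree-$0$, mod-$p^n$ lifting of \autoref{Ieronymou} with the finiteness just established — together with the purely formal check that exactness of \eqref{complexp} is equivalent to this surjectivity onto the Brauer kernel, using that $\Br(X)\{p\}/\Br_0(X)\{p\}$ is finite (as $\Pic(X_{\overline F})$ is finitely generated torsion-free and $X$ is a $K3$ surface). I would also remark, as the paper does, that (b) is precisely the ``limit version'' of Ieronymou's theorem for geometrically split Kummer $K3$ surfaces, and that the density hypothesis enters only through the invocation of \autoref{Ieronymou}.
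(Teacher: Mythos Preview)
Your overall architecture --- reduce to local $p$-divisibility statements away from $p$, invoke \autoref{padic1} at places above $p$, and for part (b) apply \autoref{Ieronymou} once with $n=p^N$ where $p^N$ bounds the exponent of the finite middle term --- matches the paper's proof, and your treatment of (b) is essentially correct. There is, however, a genuine gap in your argument for (a), and a related over-restriction in your choice of $T$.

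\textbf{The gap: bad reduction places.} You assert that for every $v\nmid p$ the group $A_0(X_v)$ is $p$-divisible by combining \cite{Matsumoto2015}, \cite{Ieronymou2021}, and \cite{Saito/Sato2010}. But that chain of implications requires $X_{K_w}$ to have good reduction at $w\mid v$, which fails precisely at the fixed finite set $\Lambda$ of bad-reduction places of $A$; these places are independent of $p$ and do not disappear by imposing conditions on $p$. The paper addresses this with an extra condition (its Condition~3): split $\Lambda$ into $\Lambda_1$ (potentially good reduction), where one passes to an extension $L^w/K_w$ of degree dividing $24$ to acquire good reduction and then runs the Saito--Sato plus norm--restriction argument (forcing $p\nmid 6$), and $\Lambda_2$ (potentially split multiplicative reduction), where \autoref{padic1} already gives $|A_0(X_v)_{\nd}|=N_v<\infty$ and one simply demands $p\nmid N_v$. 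Without this, your middle term may retain contributions from $\Lambda$, and part (a) does not follow.

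\textbf{The over-restriction.} By building ``$p$ unramified in $K$'' into your definition of $T$, you have (via \autoref{pdivisible}, since your condition (3) gives good ordinary or almost ordinary reduction and your condition (1) gives $p\nmid 2[K:F]\deg(\phi)$) forced $A_0(X_v)$ to be $p$-divisible for \emph{every} $v\mid p$ and every $p\in T$. Thus your middle term vanishes identically on $T$, which technically verifies the theorem but trivializes (a) and (b) and loses exactly the content exploited later in \autoref{Brorthogonal}. The paper deliberately omits the unramified hypothesis from $T$ and only imposes it for the subset $T_0\subseteq T$ used in part (c); your argument for (c) via $A[p]\not\subset A(F_v)$ and \autoref{injectivity2}/\autoref{CMcase} is also misdirected (those results compute a \emph{nonzero} $\Z/p^n$ under the hypothesis $A[p]\subset A(k)$), whereas the correct route is precisely the unramified criterion you placed too early.
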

\begin{proof}
The proof will be along the lines of \cite[Theorem 3.1]{Gazaki2022weak}. We will construct the required infinite set of primes $p$ by imposing certain conditions. Let $K/F$ be a finite extension such that there exists an abelian surface $A$ over $K$ admitting an isogeny $\phi: E_1\times E_2\to A$ and such that there exists an isomorphism $X_K\simeq A_K$. Without loss of generality we can take $K/F$ to be Galois, so that for any $w\in \Omega_K$ lying over a place $v\in \Omega$ we have $[K_w:F_v]\mid [K:F]$. 
Let $\Lambda_K\subset \Omega_K$ be the set of all finite places of bad reduction of the abelian surface $A$. Note that $\Lambda_K$ is a finite set. Consider the following finite set of places of $F$:  
\[\Lambda=\{v\in\Omega_f: \exists w\in\Lambda_K \text{ such that }w|v\}.\]

\textbf{Condition 1:} Let $p$ be prime not dividing $2\deg(\phi)[K:F]$ and such that for every place $w\in\Omega_K$ above $p$ the abelian surface $A_w$ has good reduction.

We will show that for this collection of primes the middle term of \eqref{complexp} becomes 
\begin{equation}\label{eqn1}
\left(\prod_{v|p}\varprojlim\limits_n A_0(X_v)/p^n\right)\times \left(\prod_{v\in\Lambda}\varprojlim\limits_n A_0(X_v)/p^n\right).
\end{equation}

 Let $v\in\Omega$ be such that $v\not\in\Lambda$ and $v\nmid p$. The claim will follow if we show that the group $A_0'(X_v)$ is $p$-divisible. Suppose first that $v$ is an infinite real place. Then the group $A_0'(X_v)$ is $2$-torsion, and hence $p$-divisible since we assumed that $p$ is odd. Next suppose that $v\in\Omega_f$. 
The assumption  $v\not\in\Lambda$ implies that for every finite place $w$ of $K$ lying over $v$ the abelian surface $A_w$ has good reduction. Thus, so does the $K3$ surface $X_w\simeq\Kum(A_w)$ by \cite[Lemma 4.2]{Matsumoto2015}. The assumption $v\nmid p$ implies that $v$ lies above a rational prime $l\neq p$. It then follows by  \cite[Proposition 3.4]{Ieronymou2021} 
that the group $A_0(X_w)$ is $p$-divisible. 
  Since $p$ is coprime to $2\deg(\phi)[K_w:F_v]$, the norm map 
  \[N_{K_w/F_v}:A_0(X_w)/p\to A_0(X_v)/p\]
  is a surjective map of $\F_p$-vector spaces, since $N_{K_w/F_v} \res_{K_w/F_v}=[K_w:F_v]$. We conclude that $A_0'(X_v)=A_0(X_v)$ is $p$-divisible as desired. 



\textbf{Condition 2:} We require additionally that for every finite place $w\in\Omega_K$ lying above $p$ the abelian surface $A_w$ has good ordinary or almost ordinary reduction.  

Since we assumed that $p$ is coprime to $\deg(\phi)$, this condition is equivalent to requiring that at least one of the elliptic curves $E_{1w}, E_{2w}$ has good ordinary reduction. 
Under this condition, it follows directly by \autoref{padic1} that for each finite place $v\in\Omega$ lying above $p$ the group $A_0(X_v)_{\nd}$ is finite. This yields an equality 
\[\prod_{v|p}\varprojlim\limits_n A_0(X_v)/p^n=\prod_{v|p}A_0(X_v)_{\nd}\{p\}.\]
In particular, the first component of \eqref{eqn1} consists of genuine zero-cycles. 
We need one last condition that will guarantee that the second component of \eqref{eqn1} vanishes. 

Set $B=E_1\times E_2$. For each $w\in\Lambda_K$ there exists a finite extension $L^w/K_w$ such that the abelian surface $B_{L^w}$ attains split semistable reduction; in particular we can find such an extension with index $[L^w:K_w]$ dividing $24$ \cite[VII.5.4(c), A.1.4a]{Silverman}. \black
We will say that $B_w$ has \textit{potentially good reduction} if $B_{L^w}$   has good reduction. This is equivalent to both elliptic curves $E_{i,L^w}$ having good reduction. Otherwise we will say that $B_w$  has potentially split bad reduction, which is equivalent to saying that at least one of the elliptic curves  $E_{i,L^w}$ has split multiplicative reduction. Let $\Lambda_{1}\subset\Lambda$ be the set of  places of potentially good reduction and $\Lambda_{2}\subset\Lambda$ the set of places of potentially split bad reduction. 
For $v\in\Lambda_2$, it follows by \autoref{padic1} that the group $A_0(X_v)_{\nd}$ is finite; let $N_v$ denote its order. Consider the following positive integer, 
\[M:=6\prod_{v\in\Lambda_2}N_v.\] 

\textbf{Condition 3:} In addition to conditions 1 \& 2, we also require that $p$ is coprime to $M$. 

It is enough to show that if $p\nmid M$, then for every $v\in\Lambda$ the group $A_0(X_v)$ is $p$-divisible. For $v\in\Lambda_2$ this follows by the assumption $p\nmid N_v$.  For $v\in\Lambda_1$ this follows similarly to the good reduction case above. That is, since $v\nmid p$ and $p\nmid 2\deg(\phi)$, the group $A_0(X_{L^w})$  is $p$-divisible. 
 Since $p\nmid 6$ we have $p\nmid [L^w:K_w]$, which together with $p\nmid [K_w:F_v]$ implies $A_0(X_v)$ is $p$-divisible by the usual norm-restriction argument. \black 

Let $T$ be the set of prime numbers $p$ that satisfy conditions 1-3. Then for every $p\in T$ the middle term $\displaystyle \prod_{v\in\Omega}\varprojlim\limits_n A_0'(X_v)/p^n$ of the complex \eqref{complexp} becomes
\[\prod_{v|p}A_0(X_v)_{\nd}\{p\},\] which is a finite group consisting of genuine zero-cycles. This proves (a). Let $p^N$ be the order of this finite group. 

We next prove (b). Let $\displaystyle\{z_v\}_{v\in\Omega}\in \displaystyle \prod_{v\in\Omega}\varprojlim\limits_n A_0'(X_v)/p^n$ be a family of local zero-cycles which is orthogonal to $\Br(X)$, where we may assume that $z_v=0$ for every $v\nmid p$. 
It follows by \autoref{Ieronymou} that there exists a global zero-cycle $z\in A_0(X)$ such that $z\equiv z_v$ in $A_0(X_v)/p^N$ for all $v\in\Omega$. But  for every place $v\nmid p$, $A_0(X_v)/p^N=A_0(X_v)_{\nd}\{p\}$.
 We conclude that under the diagonal map $\Delta$ the image of $z$ coincides with the local family $(z_v)$, and hence the complex \eqref{complexp} is exact. 

Lastly we prove (c).
 Let $T_0\subseteq T$ be the subset that contains all primes $p$ such that for each place $w\in\Omega_K$ with $w\mid p$ 
 the extension $K_w/\Q_p$ is unramified. Since there are only finitely many rational primes that ramify in $K$, $T_0$ contains all but finitely many of the primes in $T$. For $p\in T_0$,  it follows by \autoref{pdivisible} that each $A_0(X_v)_{\nd}\{p\}$ is trivial. Thus, in this case exactness of the complex \eqref{complexp} follows by vanishing of the middle term. 

We close this proof by noting that the set $T$ we constructed is always infinite, since it contains all but finitely many of the  primes $p$ lying below places of $K$ of good ordinary reduction, and the latter is always infinite. In fact, if we additionally assume that at least one of the elliptic curves has geometric endomorphism ring $\Z$, then the complement of $T$ is a set of primes of density zero (see \cite{LangTrotter, Serre1981}). 

\end{proof} 

\begin{rem} We could have chosen to state the above theorem in a simpler way.  Namely, by discarding the finitely many primes from the set $T$ for which the middle term $\displaystyle\prod_{v|p}A_0(X_v)_{\nd}\{p\}$ might be nontrivial, we would have a statement in (c) true for every prime in $T$. The reason we chose to state \autoref{localglobal1} for this enlarged set $T$ is because in the next section we will use the explicit description of the middle term in cases when it becomes nontrivial. In those cases statement (b) of \autoref{localglobal1} becomes relevant. 
\end{rem}

\begin{rem}\label{BB}
The key point in the above proof that allowed us to pass to the limit for $p^n$ in the result of Ieronymou was the $p$-divisibility results obtained in \autoref{unramified1} and \autoref{pdivisible}. 
In \cite[Conjecture 1.3]{Gazaki/Hiranouchi2021} we proposed a conjecture that for a product $A=E_1\times E_2$ of elliptic curves over a number field $F$, $T(A_v)=0$ for all unramified places of good reduction. The cases we are missing are the places where both elliptic curves have good supersingular reduction. If we could prove this, 
then this would imply  \autoref{localtoglobal2} for Kummer surfaces associated to products of elliptic curves in its greatest generality, since any such $K3$ surface has a rational point. One could also try to remove the strong assumption on the Brauer-Manin obstruction being the only obstruction to weak approximation for rational points. For, the proof of \autoref{localglobal1} would reduce the problem to proving exactness of the complex \eqref{complexp} for only the following finite list of primes: 
\begin{itemize}
\item $p|2M$, where $M$ is the integer defined in the proof of \autoref{localglobal1}, 
\item primes $p$ lying under the set $\Lambda$ of bad reduction places,
\item primes that ramify in $F$. 
\end{itemize}
Thus, one could try to find explicit examples of Kummer surfaces over $\Q$ that satisfy the conjecture unconditionally. 
We close this remark by noting that the expectation $A_0(X_v)=0$ for a Kummer surface $X$ and for all (but finitely many) unramified places of good reduction is also motivated by the Bloch-Beilinson conjectures. For a general smooth projective variety $Y$ over a number field Bloch (\cite{Bloch1983}) predicts that the Chow group $\CH_0(Y)$ is finitely generated, while Beilinson (\cite{Beilinson1984}) predicts that $T(Y_{\overline{\Q}})=0$. Putting these together, the expectation is that the Albanese kernel $T(Y)$ is a finite group, and hence it should equal its completion. If now $Y$ is a $K3$ surface, then it follows by \cite[Theorem 1.2]{Skorobogatov/Zharin2008} that the group $\Br(Y)/\Br_0(Y)$ is finite. 
Thus, the complex 
\[\widehat{A_0(Y)}\stackrel{\Delta}{\longrightarrow}
\widehat{A_{0,\mathbf{A}}(Y)}\rightarrow\Hom(\Br(Y)/\Br_0(Y),\Q/\Z)
\] being exact would force the group $\widehat{A_{0,\mathbf{A}}(Y)}$ to be finite, and hence only finitely many places should contribute nontrivial terms.

\end{rem}


\vspace{2pt}

\subsection{The role of good reduction places}\label{goodredplaces} In this section, building from the main results in \cite{Gazaki2022weak},  we will construct explicit examples of Kummer surfaces to show:  
\begin{itemize}
\item The places of good ordinary reduction can contribute nontrivially to the Brauer set for zero-cycles of degree $0$, as long as we pass to a suitable finite ramified extension. 
\item Obtain unconditional evidence for the exactness of the complex \eqref{complexp}, even in cases when there are nontrivial local families of zero-cycles that are orthogonal to the Brauer group. 
\end{itemize}


We start with the following lemma, which follows by an easy diagram chasing. 
\begin{lem}\label{diagramchasing} Let $A_i\xrightarrow{f_i}B_i\xrightarrow{g_i}C_i$ be complexes of abelian groups for $i=1,2$ and suppose there is a commutative diagram
\[\begin{tikzcd}
A_1\ar{r}{f_1}\ar{d}{\alpha} & B_1\ar{r}{g_1}\ar{d}{\beta} & C_1\ar{d}{\gamma} \\
A_2\ar{r}{f_2} & B_2\ar{r}{g_2} & C_2.
\end{tikzcd}\] 
\begin{enumerate}
\item[(i)] If $\beta$ is surjective, $\gamma$ is injective, and the top complex is exact, then the bottom complex is exact. 
\item[(ii)] If $\alpha$ is surjective, $\beta$ is injective, and the bottom complex is exact, then the top complex is exact. 
\end{enumerate}
\end{lem}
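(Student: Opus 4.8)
The plan is to prove both parts by a direct diagram chase, exploiting the fact that in each case one of the two inclusions defining exactness at the middle term is automatic (since $g_i\circ f_i=0$), so only the reverse inclusion requires work.

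For part (i), I would start with an element $b_2\in\ker(g_2)$ and aim to write it in the form $f_2(a_2)$ for some $a_2\in A_2$. Using surjectivity of $\beta$, lift $b_2$ to some $b_1\in B_1$ with $\beta(b_1)=b_2$. Commutativity of the right square gives $\gamma(g_1(b_1))=g_2(b_2)=0$, and injectivity of $\gamma$ forces $g_1(b_1)=0$. Exactness of the top complex then produces $a_1\in A_1$ with $f_1(a_1)=b_1$, and commutativity of the left square yields $f_2(\alpha(a_1))=\beta(f_1(a_1))=\beta(b_1)=b_2$, which is the desired conclusion.

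For part (ii), I would start with $b_1\in\ker(g_1)$ and aim to write it as $f_1(a_1)$. Commutativity of the right square gives $g_2(\beta(b_1))=\gamma(g_1(b_1))=0$, so exactness of the bottom complex yields $a_2\in A_2$ with $f_2(a_2)=\beta(b_1)$. Surjectivity of $\alpha$ provides $a_1\in A_1$ with $\alpha(a_1)=a_2$, and then commutativity of the left square gives $\beta(f_1(a_1))=f_2(\alpha(a_1))=f_2(a_2)=\beta(b_1)$; injectivity of $\beta$ forces $f_1(a_1)=b_1$, so $b_1\in\img(f_1)$.

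There is no substantive obstacle here; the only thing requiring care is invoking commutativity of the correct square (left versus right) at each step and keeping track of which hypothesis (surjectivity, injectivity, or exactness) is used where. Both chases terminate in three steps, and the proof can be written out in a few lines.
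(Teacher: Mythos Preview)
Your proof is correct and follows exactly the same diagram chase as the paper's own proof, step for step. Both parts are handled identically to the paper: lift via the given surjection, use commutativity and the given injection to land in the kernel (or image), apply exactness, and conclude.
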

\begin{proof}
First suppose $\beta$ is surjective,  $\gamma$ is injective, and $A_1\xrightarrow{f_1}B_1\xrightarrow{g_1}C_1$ is exact. Let $b_2\in\ker(g_2)$. Since $\beta$ is surjective, there exists $b_1\in B_1$ such that $\beta(b_1)=b_2$. Then $\gamma(g_1(b_1))=0$, and $\gamma$ is injective, which yields $b_1\in\ker(g_1)=\img(f_1)$. Thus we may write $b_1=f_1(a_1)$ for some $a_1\in A_1$. Commutativity yields $f_2(\alpha(a_1))=\beta(f_1(a_1))=\beta(b_1)=b_2$. Hence $b_2\in\img(f_2)$ as required. 

Next assume that $\alpha$ is surjective, $\beta$ is injective, and that $A_2\xrightarrow{f_2}B_2\xrightarrow{g_2}C_2$ is exact. Let $b_1\in\ker(g_1)$. Then $g_2(\beta(b_1))=0$, and hence $\beta(b_1)\in\ker(g_2)=\img(f_2)$. Write $\beta(b_1)=f_2(a_2)$ for some $a_2\in A_2$. Since $\alpha$ is surjective, we can find $a_1\in A_1$ such that $a_2=\alpha(a_1)$. Then $\beta(f_1(a_1))=f_2(\alpha(a_1))=f_2(a_2)=\beta(b_1)$. Since $\beta$ is injective we conclude that $b_1=f_1(a_1)$, and hence $b_1\in\img(f_1)$ as required. 

\end{proof} 
 
 \begin{ass} From now on we assume that $A=E_1\times E_2$ is a product of two elliptic curves over a number field $F$, and $X=\Kum(A)$ is the associated Kummer surface. Additionally, we assume that  the N\'{e}ron-Severi group $\NS(A_{\overline{F}})$ has trivial $\Gal(\overline{F}/F)$-action.
 \end{ass}

 Let $p$ be an odd prime. It follows by \cite[Proposition 2.16]{Gazaki2022weak} that the complex \eqref{complexp} induces a complex 
\begin{equation}\label{complexp*}\varprojlim\limits_{n}T(A)/p^n\stackrel{\Delta}{\longrightarrow}
\prod_{v\in\Omega_f}\varprojlim\limits_{n}T(A_v)/p^n\rightarrow\Hom(\Br(A)\{p\}/\Br_1(A)\{p\},\Q/\Z).
\end{equation}

Moreover, tracking what the proof of \autoref{localglobal1} gives in the special case of a Kummer surface (that is $K=F$ and $\deg(\phi)=1$), we see that for every prime number $p$ in the set $T$ constructed in the proof, it follows that the above complex becomes 
\begin{equation}\label{complexp**}\varprojlim\limits_{n}T(A)/p^n\stackrel{\Delta}{\longrightarrow}
\prod_{v|p}T(A_v)_{\nd}\{p\}\rightarrow\Hom(\Br(A)\{p\}/\Br_1(A)\{p\},\Q/\Z).
\end{equation}

\begin{lem}\label{commutes} Consider the above set-up. For every prime number $p\in T$ we have a commutative diagram 
\[\begin{tikzcd}
T(A)/p^N\ar{r}{\Delta_A}\ar{d}{\pi_\star} & \prod_{v|p}T(A_v)_{\nd}\{p\}\ar{r}{\varepsilon_A}\ar{d}{(\pi_{v\star})_v} &\Hom(\Br(A)\{p\}/\Br_1(A)\{p\},\Q/\Z)\ar{d}{\widehat{\pi^\star}}\\
A_0(X)/p^N\ar{r}{\Delta_X} & \prod_{v|p}A_0(X_v)_{\nd}\{p\}\ar{r}{\varepsilon_X} & \Hom(\Br(X)\{p\}/\Br_0(X)\{p\},\Q/\Z),
\end{tikzcd}\] where $p^N$ is the order of the finite group $\prod_{v|p}T(A_v)_{\nd}\{p\}$. In this diagram the two leftmost vertical maps are surjective and the rightmost vertical map is  injective.  
Thus, if the top complex is exact, then so is the bottom one. 
\end{lem}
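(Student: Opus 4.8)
The plan is to verify, in turn, that the middle vertical arrow and the horizontal maps of the diagram are well defined with the indicated targets, that the two squares commute, and that the two left verticals are surjective while the right vertical is injective; the closing assertion is then immediate from \autoref{diagramchasing}(i) applied with $\beta=(\pi_{v\star})_v$ and $\gamma=\widehat{\pi^\star}$. \emph{Well-definedness.} Since $p\in T$, \autoref{padic1} gives that $T(A_v)_{\nd}$ and $A_0(X_v)_{\nd}$ are finite for every $v\mid p$, and by the choice of $N$ the integer $p^N$ annihilates $\prod_{v\mid p}T(A_v)_{\nd}\{p\}$; hence $T(A_v)/p^N=T(A_v)_{\nd}\{p\}$ and $A_0(X_v)/p^N=A_0(X_v)_{\nd}\{p\}$, so $\Delta_A,\Delta_X$ are simply the diagonal restriction maps to the places above $p$, and they factor through the mod-$p^N$ quotients because their targets are $p^N$-torsion. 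Each local pushforward $\pi_{v\star}$ carries $T(A_v)$ into $A_0(X_v)$, sends divisible subgroups to divisible subgroups, and preserves $p$-primary torsion, so it induces a map $T(A_v)_{\nd}\{p\}\to A_0(X_v)_{\nd}\{p\}$; their product over $v\mid p$ is $(\pi_{v\star})_v$.

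\emph{Commutativity.} The left square expresses the compatibility of proper pushforward on $\CH_0$ with base change to the completions, together with its (automatic) compatibility with projection onto the non-divisible $p$-primary summand. For the right square, note that for each $v\mid p$ the hypotheses of \autoref{mainlocal} hold over $F_v$: by the defining conditions on $T$ at least one of $E_{1,v},E_{2,v}$ has good ordinary reduction, so at most one of them has potentially good supersingular reduction, and $\NS(A_{v,\overline{F_v}})$ agrees with $\NS(A_{\overline F})$ (compatibly with the decomposition-group map on Galois), hence carries trivial Galois action. Therefore at each such $v$ the diagram of \autoref{mainlocal}, taken with $n=p^N$ and restricted to $p$-primary torsion, commutes; precomposing the $\Hom$-terms with restriction from the global Brauer quotients $\Br(A)\{p\}/\Br_1(A)\{p\}$ and $\Br(X)\{p\}/\Br_0(X)\{p\}$ (which respects the filtration levels, $\Br_1$ pulling back into $\Br_1$) and summing over $v\mid p$ produces exactly the right square of the lemma. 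Concretely, this is the sum over $v\mid p$ of the projection-formula identity $\langle z_v,\pi_v^\star\beta_v\rangle_{A_v}=\langle\pi_{v\star}z_v,\beta_v\rangle_{X_v}$ proved in \autoref{functoriality}, together with the functoriality $(\pi^\star\beta)_v=\pi_v^\star(\beta_v)$.

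\emph{The verticals, and conclusion.} The left vertical $\pi_\star\colon T(A)/p^N\to A_0(X)/p^N$ is surjective by \autoref{coprimeto2} since $p^N$ is odd; applying \autoref{coprimeto2} over $F_v$ (cf.~\eqref{mainsurjection}) shows each $\pi_{v\star}\colon T(A_v)_{\nd}\{p\}\to A_0(X_v)_{\nd}\{p\}$ is surjective, so $(\pi_{v\star})_v$ is surjective. For the right vertical, \cite[Theorem~2.4]{SZ2012} gives that $\pi^\star\colon\Br(X)_{p^m}/\Br_1(X)_{p^m}\to\Br(A)_{p^m}/\Br_1(A)_{p^m}$ is an isomorphism for every $m\geq1$ (as $p$ is odd), while $\Br(X)_{p^m}/\Br_0(X)_{p^m}\twoheadrightarrow\Br(X)_{p^m}/\Br_1(X)_{p^m}$; taking the union over $m$, the composite $\Br(X)\{p\}/\Br_0(X)\{p\}\to\Br(A)\{p\}/\Br_1(A)\{p\}$ is surjective, so its dual $\widehat{\pi^\star}$ is injective because $\Hom(-,\Q/\Z)$ is exact. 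Now \autoref{diagramchasing}(i), with $\beta=(\pi_{v\star})_v$ surjective and $\gamma=\widehat{\pi^\star}$ injective, shows that exactness of the top complex forces exactness of the bottom one.

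The one genuinely delicate point is matching the second map $\varepsilon_A$ of the complex \eqref{complexp**}---the ``sum of local Brauer--Manin pairings'' map into $\Hom(\Br(A)\{p\}/\Br_1(A)\{p\},\Q/\Z)$---with the assembly of the local squares of \autoref{mainlocal}, and keeping the three filtration levels $\Br_0\subseteq\Br_1\subseteq\Br$ on $A$ versus on $X$ straight: in particular the fact that $\varepsilon_A$ may legitimately be taken modulo $\Br_1$ (using the $\NS$-triviality hypothesis, exactly as in the construction of \eqref{complexp*}) whereas $\varepsilon_X$ is taken only modulo $\Br_0$, and that $\pi^\star$ intertwines these. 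Once this bookkeeping is in place, the argument is a direct appeal to \autoref{padic1}, \autoref{coprimeto2}, \autoref{functoriality}, \autoref{mainlocal}, \cite{SZ2012} and \autoref{diagramchasing}.
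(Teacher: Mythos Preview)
Your proof is correct and follows essentially the same approach as the paper: commutativity of the right square via \autoref{mainlocal} (summed over $v\mid p$), surjectivity of the left two verticals via \autoref{coprimeto2}, injectivity of $\widehat{\pi^\star}$ via \cite{SZ2012} (which the paper packages inside \autoref{mainlocal}), and the final step via \autoref{diagramchasing}(i). You are simply more explicit than the paper about well-definedness of the middle vertical and about verifying the hypotheses of \autoref{mainlocal} at each $v\mid p$, which is fine.
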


\begin{proof}
The commutativity of the left square is clear. Moreover, surjectivity of the first two vertical maps follows by \autoref{coprimeto2}. The maps $\varepsilon_A,\varepsilon_X$ are defined by summing the local pairings, for which commutativity follows by \autoref{mainlocal}. Thus, the right square is commutative. Lastly, since every $p\in T$ is odd, the injectivity of  
$\widehat{\pi^\star}$ also follows from \autoref{mainlocal}.  
The rest follows by \autoref{diagramchasing}.

\end{proof}


\subsection*{Kummer surface associated to a self-product of a CM Elliptic Curve} 

\begin{notn} For an elliptic curve $E$ over $\Q$ and a prime $p$ of good reduction we will denote by $\overline{E_p}$ the reduction of $E$ modulo $p$, which is an elliptic curve over $\F_p$. 
\end{notn}

The following proposition is the analog of \cite[Theorem 4.2]{Gazaki2022weak}), which for simplicity we only consider in a special case. We note that there is a mistake in the statement of the aforementioned theorem, which has now been fixed (see \autoref{fix} for details). The following is the analog of the corrected version of \cite[Theorem 4.2]{Gazaki2022weak}. 

\begin{prop}\label{Brorthogonal} Let $E$ be an elliptic curve over $\Q$ such that $E_{\overline{\Q}}$ has complex multiplication by the full ring of integers $\mathcal{O}_K$ of a quadratic imaginary field $K$. Let $A=E\times E$ and $X=\Kum(A)$. Let $p\geq 5$ be a prime that splits completely in $K$ and is coprime to the conductor $\mathfrak{n}$ of $E$. Suppose that the reduction $\overline{E_p}$ of $E$ modulo $p$ has the property that $|\overline{E_p}(\F_p)|=p$. Then there exists a finite extension $L/K$ of degree $p-1$ such that for the Kummer surface $X_L$ the following are true:
\begin{enumerate}
\item[(i)] The Brauer group $\Br(X_L)\{p\}/\Br_0(X_L)\{p\}$ vanishes. 
\item[(ii)] The middle term of the complex \eqref{complexp} becomes $\displaystyle\prod_{v|p}A_0(X_{L_v})_{\nd}\{p\}=\Z/p$. 
\end{enumerate}
\end{prop}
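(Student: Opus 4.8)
The plan is to deduce both statements from the analogous result for the abelian surface $A=E\times E$, namely \cite[Theorem 4.2]{Gazaki2022weak}, and then to transport the conclusions to $X=\Kum(A)$ along the pushforward $\pi_\star$, using the machinery assembled in \autoref{localresults} (in particular \autoref{mainlocal}, \autoref{coprimeto2} and the explicit local computations of \autoref{computations1}). Since the hypotheses $p\geq 5$, $p$ split in $K$, $p\nmid\mathfrak n$ and $|\overline{E_p}(\F_p)|=p$ are precisely those of \cite[Theorem 4.2]{Gazaki2022weak}, that theorem supplies a degree $p-1$ extension $L/K$, totally ramified at the two primes $\mathfrak p,\overline{\mathfrak p}$ of $K$ above $p$, such that: (i$'$) the transcendental Brauer group $\Br(A_L)\{p\}/\Br_1(A_L)\{p\}$ vanishes; and (ii$'$) at each of the two places $v\mid p$ of $L$ the completion $L_v$ is arranged so that $E[p]\subset E(L_v)$ and the wild ramification condition of \autoref{injectivity2} holds, whence $T(A_{L_v})_{\nd}\{p\}\cong\Z/p$ and $\prod_{v\mid p}T(A_{L_v})_{\nd}\{p\}=(\Z/p)^2$.

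I would then prove (ii) as follows. Because $p$ splits in $K$, the primes $\mathfrak p,\overline{\mathfrak p}$ are primes of good ordinary reduction for $E$, so $A$ has good ordinary reduction at every place of $L$ above $p$; thus Conditions 1 and 2 in the proof of \autoref{localglobal1} are met for $X_L$ at $p$ (with $K=F$ and $\deg\phi=1$ there), and the middle term of \eqref{complexp} for $X_L$ equals $\prod_{v\mid p}A_0(X_{L_v})_{\nd}\{p\}$. For each $v\mid p$ the curves $E_{1L_v}=E_{2L_v}=E_{L_v}$ have good ordinary reduction, $\NS(A_{\overline{L_v}})$ has trivial Galois action (as $L_v\supset K$ and, by \cite[Proposition 2.17]{Gazaki2022weak}, the CM endomorphisms of $E$ are defined over $K$), and the wild ramification hypothesis holds by construction of $L$, so \autoref{injectivity2} (equivalently the $n=1$ instance of \autoref{CMcase}) gives $A_0(X_{L_v})_{\nd}\{2'\}\cong T(A_{L_v})_{\nd}\cong\Z/p$; since $p$ is odd this is $A_0(X_{L_v})_{\nd}\{p\}$. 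Multiplying over the two places $v\mid p$ yields (ii).

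For (i) I would treat the transcendental and algebraic parts of the Brauer group separately. For the transcendental part, \cite[Theorem 2.4]{SZ2012} furnishes an isomorphism $\pi^\star\colon\Br(X_L)\{p\}/\Br_1(X_L)\{p\}\xrightarrow{\ \sim\ }\Br(A_L)\{p\}/\Br_1(A_L)\{p\}$ (it is an isomorphism because $p$ is odd), and the right-hand side vanishes by (i$'$); hence $\Br(X_L)\{p\}=\Br_1(X_L)\{p\}$. For the algebraic part, $\Br_1(X_L)/\Br_0(X_L)$ embeds into $H^1(\Gal(\overline L/L),\Pic(X_{\overline L}))$, and $\Pic(X_{\overline L})=\NS(X_{\overline L})$ is a finitely generated free $\Z$-module on which Galois acts through a finite group $G$, so this $H^1$ is annihilated by $|G|$. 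Now $\NS(X_{\overline L})$ is spanned by the sixteen exceptional classes over $A[2]$ together with the image of $\NS(A_{\overline L})$, and the latter is Galois-fixed over $L$; so $G$ acts only by permuting the sixteen exceptional curves through the action of Galois on $A[2]=E[2]\times E[2]$, which factors through $\mathrm{GL}_2(\F_2)$, a group of order $6$. Thus $|G|\mid 6$ is coprime to $p\geq 5$, giving $(\Br_1(X_L)/\Br_0(X_L))\{p\}=0$, and combining with the transcendental computation, $\Br(X_L)\{p\}/\Br_0(X_L)\{p\}=0$.

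The great majority of this is bookkeeping with results already in hand — \cite[Theorem 4.2]{Gazaki2022weak} for $A$, \cite[Theorem 2.4]{SZ2012} for the Brauer groups, and \autoref{injectivity2}, \autoref{CMcase} for the local Chow groups. The one step requiring genuine (if routine) care is the bound on the $p$-part of the algebraic Brauer group $\Br_1(X_L)/\Br_0(X_L)$: one must verify that over $L$ the Galois action on $\NS(X_{\overline L})$ really collapses to the permutation action on the sixteen exceptional curves, which rests on $\NS(A_{\overline L})$ being Galois-trivial over $L$ — valid here since $L\supseteq K$ and $E$ has CM by $\mathcal O_K$ with its endomorphisms defined over $K$. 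I expect this algebraic-Brauer bound to be the main obstacle; everything else is a direct transfer along $\pi_\star$ via \autoref{mainlocal} and \autoref{coprimeto2}.
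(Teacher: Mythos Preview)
Your approach is essentially the same as the paper's: invoke \cite[Theorem 4.2]{Gazaki2022weak} for $A$, then transfer to $X$ via \cite[Theorem 2.4]{SZ2012} for the Brauer group and via \autoref{injectivity2}/\autoref{CMcase} for the local Chow groups. Two points of comparison are worth flagging.

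First, for (ii) you verify only Conditions 1 and 2 of the proof of \autoref{localglobal1}. Those conditions alone give the middle term as in \eqref{eqn1}, which still carries a factor $\prod_{v\in\Lambda}\varprojlim_n A_0(X_{L_v})/p^n$ coming from the bad reduction places of $A_L$. You need Condition 3, namely $p\nmid M=6\prod_{v\in\Lambda_2}N_v$, to kill this factor. The paper dispatches this by citing Claim 2 in the proof of \cite[Theorem 4.2]{Gazaki2022weak}, which shows $p\geq 5$ suffices (bad reduction of a CM elliptic curve is potentially good, and the relevant torsion orders are coprime to $p$). This is a genuine, if small, gap in your write-up.

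Second, for (i) your argument is actually more complete than the paper's. The paper only records that $\pi^\star:\Br(X_L)\{p\}/\Br_0(X_L)\{p\}\to\Br(A_L)\{p\}/\Br_1(A_L)\{p\}$ is surjective and concludes vanishing---but surjectivity onto zero says nothing about the source; what is needed is control of the kernel $\Br_1(X_L)\{p\}/\Br_0(X_L)\{p\}$. Your treatment of this kernel via $H^1(L,\NS(X_{\overline L}))$ and the bound $|G|\mid 6$ (from the diagonal action of $\Gal$ on $E[2]\times E[2]$ through $\mathrm{GL}_2(\F_2)$, the action on $\NS(A_{\overline L})$ being trivial over $L\supseteq K$) is correct. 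One small refinement: the sixteen exceptional classes together with $\pi_\star\NS(A_{\overline L})$ span only a finite-index (2-power index) sublattice of $\NS(X_{\overline L})$, not all of it; but since $\NS(X_{\overline L})$ sits in the rational span of this sublattice, the Galois action on the full lattice still factors through the same $G$, so your order bound survives.
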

\begin{proof}
 We recall the following facts about elliptic curves over $\Q$ with CM (for more details we refer to \cite[Section 2]{Gazaki2022weak}). 
 The field $K$ is one of the nine quadratic imaginary fields with class number $1$. That is, $\mathcal{O}_K$ is a PID. Let $p$ be a prime that splits completely in $K$ with $p$ coprime to $\mathfrak{n}$. We may write $p=\eta\overline{\eta}$ for some prime element $\eta$ of $K$. The elliptic curve $E$ has good ordinary reduction at $p$. In fact, the element $\eta$ can be chosen so that the endomorphism $[\eta]: E_{\Q_p}\to E_{\Q_p}$ when reduced modulo $p$ coincides with the Frobenius endomorphism $\overline{E_p}\xrightarrow{\phi}\overline{E_p}$. We denote by $E[\eta]$ the kernel of $[\eta]:E_K\to E_K$, and by $\mathfrak{p}=(\eta), \overline{\mathfrak{p}}=(\overline{\eta})$ the primes of $K$ above $p$. We note that in \cite{Gazaki2022weak} we used $\pi$ to denote the prime element of $K$ that reduces to the Frobenius. In this article we changed notation to $\eta$ to avoid confusion with the projection map $\pi:A\dashrightarrow X$. 
 
From now on assume that $p\geq 5$ and that $|\overline{E_p}(\F_p)|=p$. We consider the extension $L=K(E[\eta])$. It follows by \cite[Corollary 5.20 (ii), (iii)]{Rub99} that $L/K$ is a cyclic Galois extension of order $p-1$ and it is totally ramified above $\mathfrak{p}$, and unramified above $\overline{\mathfrak{p}}$. Let $v$ be the unique place of $L$ lying above $\mathfrak{p}$. It follows by \cite[Theorem 4.2]{Gazaki2022weak}) and its proof that the following are true for the abelian surface $A_L$:  \begin{enumerate}
\item[(a)] $\Br(A_L)\{p\}/\Br_1(A_L)\{p\}=0$.  
\item[(b)] $\displaystyle\prod_{w|p}T(A_{L_w})_{\nd}\{p\}=T(A_{L_v})_{\nd}\{p\}\simeq\Z/p$.
\end{enumerate}
Since $p$ is odd, it follows from the proof of \autoref{mainlocal}  that $\pi^\star: \Br(X_L)\{p\}/\Br_0(X_L)\{p\}\to \Br(A_L)\{p\}/\Br_1(A_L)\{p\}$ is a surjection, \black 
 from where the vanishing $\Br(X_L)\{p\}/\Br_0(X_L)\{p\}=0$ follows.

To show that the middle term of the complex \eqref{complexp} becomes $\displaystyle\prod_{v|p}A_0(X_{L_v})_{\nd}\{p\}$, it is enough to verify that every prime $p\geq 5$ of good reduction that splits completely in $K$ lies in the set $T_L$ of \autoref{localglobal1} corresponding to the Kummer surface $X_L$.
 We have already established the reduction type. It remains to show $p\nmid M$. This follows by the assumption $p\geq 5$ (see Claim 2 in the proof of \cite[Theorem 4.2]{Gazaki2022weak}). 

To finish the proof of (ii), it suffices to show that the map \[\pi_{v\star}: T(A_{L_v})_{\nd}\{p\}\to A_0(X_{L_v})_{\nd}\{p\}\] is an isomorphism. This will follow from \autoref{CMcase} as long as we establish the following. 

\textbf{Claim:} The extension $L_v/\Q_p$ coincides with the extension $\Q_p(E_{\Q_p}[p])$.


This follows by the proof of \cite[Theorem 4.2]{Gazaki2022weak}). Namely, the CM assumption implies that there is a \textbf{splitting short-exact sequence} of $\Gal(\overline{\Q}_p/\Q_p)$-modules 
\[0\to\widehat{E_{\Q_p}}[p]\to E_{\Q_p}[p]\to\overline{E_p}[p]\to 0,\] where $\widehat{E_{\Q_p}}$ is the formal group associated to the elliptic curve $E_{\Q_p}$. The Galois module $\widehat{E_{\Q_p}}[p]$ coincides with $E[\eta]$, and hence $L_v\subset \Q_p(E_{\Q_p}[p])$. The assumption $|\overline{E_p}(\F_p)|=p$ is equivalent to saying that $\overline{E_p}[p]= \overline{E_p}(\F_p)$, from where the claim follows.

\end{proof} 

\begin{rem}\label{fix} The current statement of \cite[Theorem 4.2]{Gazaki2022weak}) states that for the abelian surface $A$ the middle term $\displaystyle\prod_{w|p}T(A_{L_w})_{\nd}\{p\}\simeq(\Z/p)^2$. The  mistake stemmed from the wrong assumption that the extension $L/\Q$ is Galois, which would force the extension $L/K$ to be totally ramified above the place $\overline{\mathfrak{p}}$. This is wrong however. It follows by \cite[Corollary 5.20]{Rub99} that $L/K$ is unramified above $\overline{\mathfrak{p}}$, which yields a vanishing $\displaystyle\prod_{w|p, w\neq \mathfrak{p}}T(A_{L_w})_{\nd}\{p\}=0$. This mistake has been addressed by E. Gazaki's PhD student, M. Wills, in \cite[Remark 3.16]{Wills25}. 
\end{rem}

\begin{exmp}\label{example} In \cite[Examples 2.4, 2.6, 2.7]{Gazaki2022weak}) we constructed many explicit examples of elliptic curves $E$ and primes $p$ satisfying the assumptions of \autoref{Brorthogonal}. We recall some of these examples here. 
\begin{itemize}
\item Let $K=\Q(\sqrt{-3})$. Consider the family of elliptic curves $\{E_c:y^2=x^3+c, c\in\Z, c\neq 0\}$ having CM by $\Z[\zeta_3]$. Let $p$ be a prime of the form $4p=1+3v^2$ (e.g. $p=7, 37, 61$). Then for exactly $\displaystyle\frac{p-1}{6}$ congruence classes of $c\mod p$ the elliptic curve $E_c$ satisfies $|\overline{E_{c,p}}(\F_p)|=p$.  
\item Let $K=\Q(\sqrt{-11})$ and $p=223$. Then the family \[\{E_s: y^2=x^3-1056x+13552+223s,s\in\Z\}\] having CM by $\mathcal{O}_K$ satisfies $|\overline{E_{s,223}}(\F_{223})|=223$.

\item  Let $K=\Q(\sqrt{-19})$ and $p=43$. Then the family $\{E_l:y^2=x^3-152x+722+43l,l\in\Z\}$ having CM by $\mathcal{O}_K$ satisfies $|\overline{E_{l,43}}(\F_{43})|=43$.

\item For $K=\Q(\sqrt{D})$ with $D=-43, -67, -163$ and a prime $p$ of the form $4p=1-Dv^2$ we can also describe infinite families having the desired cardinality of the special fiber. See \cite[Example 2.7]{Gazaki2022weak}). 
\end{itemize} For the remaining three quadratic imaginary fields of class number one, $\Q(i), \Q(\sqrt{-2})$ and $ \Q(\sqrt{-7})$, there are no examples that satisfy \autoref{Brorthogonal}.
However, for $\Q(i)$ the family $\{E_t: y^2=x^3+(3+5n):n\in\Z\}$ and the prime $p=5$ we have $|\overline{E_t}(\F_5)|=10$, which is divisible by $5$. All the above results can be extended to this case. This particular example has been considered in \cite[Theorem 4.9]{Wills25}. 

\end{exmp} 

\begin{rem} It follows by \cite[Proposition 4.4]{Gazaki2022weak}) that the extension $L=K(E[\eta])$ constructed in \autoref{Brorthogonal} is minimal in the following sense. If $\Q\subsetneq F\subsetneq L$ is an intermediate extension, then 
\[\prod_{w\in\Omega(F), w|p}T(A_w)_{\nd}\{p\}=0.\] Since we have a surjection $T(A_w)_{\nd}\{p\}\twoheadrightarrow A_0(X_w)_{\nd}\{p\}$, the same is true for the Kummer surface. This suggests that the smallest ramification index for which we might see involvement of the good reduction places in the Brauer-Manin set for zero-cycles of degree $0$ is $p-1$. A result of similar flavor was recently obtained for the Brauer-Manin set for rational points on $K3$ surfaces in \cite[Theorem 1.3]{pagano2023role} motivated by earlier considerations in \cite[Theorem C]{BrightNewton}. 
\end{rem}
\subsection*{Unconditional Exactness} We see that in the situation of \autoref{Brorthogonal} the complex \eqref{complexp} for the Kummer surface $X_L$ becomes 
\[A_0(X_L)/p\to A_0(X_{L_v})_{\nd}\{p\}=\Z/p\to 0.\] 
\autoref{localglobal1} (or equivalently \cite[Theorem 1.3]{Ieronymou2021}) allows us to lift the local zero-cycles to global, assuming that the Brauer-Manin obstruction is the only obstruction to Weak Approximation for rational points on every finite extension $F/L$. 
We close this article by describing cases when exactness can be proved unconditionally. 
\autoref{commutes} reduces the problem to proving exactness of the corresponding complex for the abelian surface, 
\[T(A_L)/p\to T(A_{L_v})_{\nd}\{p\}\to 0.\] 
In \cite[Theorem 1.7]{Gazaki2022weak} we gave a sufficient condition for elliptic curves with positive rank over $\Q$ to satisfy this unconditional exactness. 
We thus obtain the following Corollary.
\begin{cor}\label{reduction} Suppose we are in the set-up of \autoref{Brorthogonal}. Suppose additionally that the elliptic curve $E$ has positive rank and there exists a global point $P\in E(\Q)$ of infinite order satisfying the assumptions of \cite[Theorem 1.7]{Gazaki2022weak}. Then the complex \[A_0(X_L)/p\to A_0(X_{L_v})_{\nd}\{p\}=\Z/p\to 0\] is exact unconditionally on the arithmetic of rational points on the Kummer surface $X_L$. 
\end{cor}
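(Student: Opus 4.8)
The plan is to deduce the assertion for the Kummer surface $X_L$ from the analogous exactness statement for the abelian surface $A_L=E_L\times E_L$, transporting it along the pushforward $\pi_\star$ by means of the comparison diagram of \autoref{commutes}. The only genuine input on the abelian side will be \cite[Theorem 1.7]{Gazaki2022weak}; everything else is formal.

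First I would verify that we are in the situation where \autoref{commutes} applies. The proof of \autoref{Brorthogonal} already shows that every prime $p\geq 5$ of good reduction which splits completely in $K$ lies in the set $T_L$ of \autoref{localglobal1} attached to $X_L$, and that the running N\'eron--Severi hypothesis holds via \cite[Proposition 2.17]{Gazaki2022weak}. Moreover, by \autoref{Brorthogonal}(ii) and \autoref{CMcase} the bottom two terms of the complex for $X_L$ are $\prod_{w\mid p}A_0(X_{L_w})_{\nd}\{p\}=A_0(X_{L_v})_{\nd}\{p\}\oplus A_0(X_{L_{\overline v}})_{\nd}\{p\}=(\Z/p)^2$, with the $v$- and $\overline v$-components of $(\pi_{w\star})_w$ isomorphisms, and by \autoref{Brorthogonal}(i) together with the vanishing $\Br(A_L)\{p\}/\Br_1(A_L)\{p\}=0$ recorded there, the right-hand terms of both complexes in \autoref{commutes} are zero. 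Hence exactness of either complex is exactly surjectivity of its leftmost arrow; here the integer $N$ of \autoref{commutes} equals $2$, and surjectivity of a map from $-/p^2$ onto the $p$-torsion group $(\Z/p)^2$ is equivalent to surjectivity from $-/p$.

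Next I would invoke \cite[Theorem 1.7]{Gazaki2022weak}: under the added hypothesis that $E$ has positive rank and carries a point $P\in E(\Q)$ of infinite order meeting the conditions of that theorem, the diagonal map \[T(A_L)/p\longrightarrow T(A_{L_v})_{\nd}\{p\}\oplus T(A_{L_{\overline v}})_{\nd}\{p\}\] is surjective, i.e.\ the top complex of \autoref{commutes} is exact. Since in \autoref{commutes} the two left vertical maps are surjective and the right vertical map injective, \autoref{diagramchasing}(i) then yields exactness of the bottom complex \[A_0(X_L)/p\longrightarrow A_0(X_{L_v})_{\nd}\{p\}\oplus A_0(X_{L_{\overline v}})_{\nd}\{p\}\longrightarrow 0,\] which is the claim.

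I do not expect a real obstacle inside this corollary: the only content is \cite[Theorem 1.7]{Gazaki2022weak}, which constructs the required global zero-cycle on $E_L\times E_L$ from the rational point $P$, and the passage to $X_L$ is a formal diagram chase through $\pi_\star$. The actual difficulty is upstream, namely checking for a concrete pair $(E,P)$ that the hypotheses of \cite[Theorem 1.7]{Gazaki2022weak} hold — exactly the case study that underlies \autoref{weakevidence}.
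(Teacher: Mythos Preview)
Your proposal is correct and follows essentially the same route as the paper: reduce to the abelian surface via \autoref{commutes} (and the underlying \autoref{diagramchasing}(i)), then invoke \cite[Theorem 1.7]{Gazaki2022weak} to obtain surjectivity of the top diagonal map. The paper's argument is terser but identical in content; your added remarks (that $N=2$, that both Brauer terms vanish so exactness is just surjectivity, and that the local $\pi_{w\star}$ are isomorphisms by \autoref{CMcase}) are all accurate refinements of the same reasoning.
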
 

\begin{rem}
The assumption needed on the point $P\in E(\Q)$ is to satisfy a certain local behavior which we review here. In \cite[Section 3.2.1]{Gazaki2022weak} we constructed a local decomposition 
\[P_{\Q_p}=\widehat{P_{\Q_p}}\oplus \overline{P_{\Q_p}},\] where $P_{\Q_p}$ is the image of the point $P$ under the composition
\[E(\Q)\hookrightarrow E(\Q_p)\twoheadrightarrow E(\Q_p)/p,\]
$\widehat{P_{\Q_p}}$ is a ``formal point" in the formal group $\widehat{E_{\Q_p}}(\Z_p)/p$ and $\overline{P_{\Q_p}}$ is a point on the special fiber $\overline{E_p}(\F_p)=\overline{E_p}[p]$. The desired condition is for $\widehat{E_{\Q_p}}$ to be nontrivial. In \cite[Remark 4.8]{Gazaki2022weak}) we wrote an algorithm to check if such a global point exists. For small values of the prime $p$ this condition can be checked computationally. Such a computation was carried out in the Appendix of \cite{Gazaki2022weak} written by A. Koutsianas and it gave rise to the \autoref{weakevidence} mentioned in the introduction. Many more examples have been recently constructed in \cite{Wills25}, where there is also a heuristic argument that suggests that the sufficient condition is satisfied by a positive proportion of elliptic curves in each family.  
\end{rem}

\vspace{2pt}

\vspace{30pt}
		
		\bibliographystyle{amsalpha}
		
		\bibliography{bibfile,bibfileNSFCareer}
	
	\end{document}